\documentclass[12pt,reqno]{amsproc}

\usepackage{times}

\usepackage[inline]{enumitem}
\usepackage{amsmath,amsthm,amssymb,wasysym,mathtools,array}

\allowdisplaybreaks
\numberwithin{equation}{section}

\usepackage{float}
\usepackage{graphicx}
\usepackage{caption}
\usepackage{subcaption}
\usepackage{xcolor}

\usepackage{tikz}
\usepackage{pgfplots}
\pgfplotsset{compat=1.18}

\newcommand{\D}{\mathbb{D}}

\newtheoremstyle{fancytheorem}
{.5\baselineskip±.2\baselineskip}
{.5\baselineskip±.2\baselineskip}
{\itshape\addtolength{\leftskip}{0mm}\setlength{\parindent}{0em}}
{0mm}
{\bfseries}
{}
{.5em}
{\thmname{#1}\thmnumber{ #2}. \thmnote{[#3]}}

\theoremstyle{fancytheorem}

\newtheorem{theorem}{Theorem}[section]
\newtheorem{lemma}[theorem]{Lemma}
\newtheorem{corollary}[theorem]{Corollary}
\newtheorem{remark}[theorem]{Remark}

\theoremstyle{fancydefinition}
\newtheorem{definition}[theorem]{Definition}

\pgfmathdeclarefunction{den}{1}{%
  \pgfmathparse{
    (4 + 2*sqrt(2))
    - (2*sqrt(2) + 2)*cos(#1)
  }%
}

\pgfmathdeclarefunction{lemMag}{2}{%
  \pgfmathparse{
    sqrt(1 + 2*(#1)*cos(#2) + (#1)^2)
  }%
}

\pgfmathdeclarefunction{lemX}{2}{%
  \pgfmathparse{
    sqrt(
      (lemMag(#1,#2) + 1 + (#1)*cos(#2))/2
    )
  }%
}

\pgfmathdeclarefunction{lemY}{2}{%
  \pgfmathparse{
    sign(sin(#2)) *
    sqrt(
      max(
        0,
        (lemMag(#1,#2) - 1 - (#1)*cos(#2))/2
      )
    )
  }%
}

\pgfmathdeclarefunction{luneMag}{2}{%
  \pgfmathparse{
    sqrt(
      1 + 2*(#1)^2*cos(2*#2) + (#1)^4
    )
  }%
}

\pgfmathdeclarefunction{luneX}{2}{%
  \pgfmathparse{
    (#1)*cos(#2)
    + sqrt(
      (luneMag(#1,#2)
      + 1
      + (#1)^2*cos(2*#2))/2
    )
  }%
}

\pgfmathdeclarefunction{luneY}{2}{%
  \pgfmathparse{
    (#1)*sin(#2)
    + sign(sin(2*#2)) *
    sqrt(
      max(
        0,
        (luneMag(#1,#2)
        - 1
        - (#1)^2*cos(2*#2))/2
      )
    )
  }%
}

\pgfmathdeclarefunction{quadX}{2}{%
  \pgfmathparse{
    1
    + sqrt(2)*(#1)*cos(#2)
    + ((#1)^2/2)*cos(2*#2)
  }%
}

\pgfmathdeclarefunction{quadY}{2}{%
  \pgfmathparse{
    sqrt(2)*(#1)*sin(#2)
    + ((#1)^2/2)*sin(2*#2)
  }%
}

\usepackage{hyperref}
\hypersetup{hidelinks}

\title{Sharp Subordination Radii through Boundary Minimization}

\author[P. Poomalai]{Palanisamy Poomalai }
\address{Department of Mathematics \\National Institute of Technology\\Tiruchirappalli-620015,  India }
\email{poomalainitt@gmail.com}

\author[V. Ravichandran]{Vaithiyanathan Ravichandran }
\address{Department of Mathematics \\National Institute of Technology\\Tiruchirappalli-620015,  India }
\email{vravi68@gmail.com; ravic@nitt.edu}

\keywords{Starlike functions; Convex functions;  Ma--Minda  starlikeness;Ma--Minda  convexity; Radius Problem }

\subjclass{30C45; 30C80}

\begin{document}

\begin{abstract}
Let $\varphi$ and $\psi$ be univalent functions in the unit disk
$\mathbb D$ satisfying $\varphi(0)=\psi(0)=1$ and
$\varphi\not\prec\psi$, and let $\mathcal P_{\varphi}$ denote the
class of analytic functions $p$ in $\mathbb D$ such that
$p\prec\varphi$. Under suitable analytic continuation, univalence,
and boundary-behavior hypotheses on the branch of $\varphi^{-1}$
satisfying $\varphi^{-1}(1)=0$, we determine the sharp
$\mathcal P_{\psi}$-radius of $\mathcal P_{\varphi}$ as
\[
\mathcal R(\varphi,\psi)
=
\min_{|\zeta|=1}
\left|
\varphi^{-1}\bigl(\psi(\zeta)\bigr)
\right|.
\]
For Ma--Minda functions satisfying these hypotheses, the same radius
is sharp for the associated classes $\mathcal{ST}(\varphi)$ and
$\mathcal{CV}(\varphi)$ with respect to
$\mathcal{ST}(\psi)$ and $\mathcal{CV}(\psi)$, respectively.
We apply the general result to
$
\varphi_{\mathrm L}(z)=\sqrt{1+z},
\varphi_{\leftmoon}(z)=z+\sqrt{1+z^2},
\varphi_{\mathrm{Lim}}(z)=\left(1+{z}/{\sqrt2}\right)^2
$,
for several choices of the Ma--Minda function $\psi$. The resulting boundary minima are evaluated analytically.
\end{abstract}

 \maketitle
 
\section{Introduction}

Let $\mathcal A$ denote the class of functions $f$ analytic in the
open unit disk
$\mathbb D:=\{z\in\mathbb C:|z|<1\},$
normalized by $f(0)=0$ and $f'(0)=1$. For analytic functions $f$ and
$g$ in $\mathbb D$, we say that $f$ is subordinate to $g$, written as
$f\prec g$, if there exists an analytic function
$\omega:\mathbb D\to\mathbb D$ satisfying $\omega(0)=0$ such that
$f(z)=g(\omega(z))$\cite{MR0708494}. If $g$ is univalent in $\mathbb D$, then
\[
f\prec g
\quad\Longleftrightarrow\quad
f(0)=g(0)
\quad\text{and}\quad
f(\mathbb D)\subset g(\mathbb D).
\]

A function $\varphi$ analytic and univalent in $\mathbb D$,
satisfying $\varphi(0)=1$, $\varphi'(0)>0$, and
$\operatorname{Re}\varphi(z)>0$ in $\mathbb D$, and mapping
$\mathbb D$ onto a domain symmetric with respect to the real axis
and starlike with respect to $1$, is called a
\emph{Ma--Minda function} \cite{MR1343506}. We denote by
$\mathcal P_{\varphi}$ the class of analytic functions $p$ in
$\mathbb D$ satisfying $p\prec\varphi$.

When $\varphi$ is a Ma--Minda function, the associated starlike and
convex classes are defined, for $f\in\mathcal A$, by
\[
f\in\mathcal{ST}(\varphi)
\quad\Longleftrightarrow\quad
\frac{zf'(z)}{f(z)}\prec\varphi(z)
\;
\text{and}
\;
f\in\mathcal{CV}(\varphi)
\quad\Longleftrightarrow\quad
1+\frac{zf''(z)}{f'(z)}\prec\varphi(z),
\]
where the defining expressions are understood by analytic
continuation at the origin. These classes provide a common framework
for many subclasses of univalent functions associated with geometric
domains.

Ma--Minda classes associated with various geometric regions have
been studied extensively. These include the classes corresponding to
the lemniscate \cite{MR1473947,MR3902942}, lune
\cite{MR3419845,MR3718233}, exponential function
\cite{MR3394060}, cardioid \cite{MR3536076}, rational function
\cite{MR3496681}, lima\c{c}on
\cite{Masih,MR4386722,MR4478088}, nephroid
\cite{MR4190740,MR4129627}, three-leaf function
\cite{MR4099174}, hyperbolic cosine function
\cite{MR4620985}, and modified sigmoid function
\cite{MR4044913}; inclusion relations and radius problems for these
classes appear, for example, in
\cite{MR4303808,MR4129627,MR4510346,MR4386722,MR4216363,DevadigaEtAl2026Atlas,GurumurthyEtAl2026Certified}.
The inverse-composition approach relevant to the present work
appears in Ali et al.~\cite[Theorem~2.2]{MR2953523}. For two
Janowski functions \cite{MR0328059} $P$ and $Q$, they used the composition
$P^{-1}\circ Q$ to determine the radius between the corresponding
Janowski starlike classes. Subordination-based radius problems were
subsequently studied by Sebastian and Ravichandran
\cite{MR4568286}, who obtained radius estimates for several classes
of analytic functions.

A typical radius problem asks for the largest $r\in(0,1]$ for which
the dilation of every function from one class belongs to another
prescribed class. Here we consider the $\mathcal P_{\psi}$-radius of
$\mathcal P_{\varphi}$, denoted by $\mathcal R(\varphi,\psi)$: the
largest $r\in(0,1]$ such that $p(rz)\prec\psi(z)$ for every
$p\in\mathcal P_{\varphi}$. Since $p\prec\varphi$, this condition
is equivalent to the geometric inclusion
$
\varphi(r\mathbb D)\subset\psi(\mathbb D).
$
Our approach is based on the inverse mapping of the source function.
Suppose that the branch of $\varphi^{-1}$ satisfying
$\varphi^{-1}(1)=0$ extends analytically and univalently to a domain
containing
$
\varphi(\mathbb D)\cup\psi(\mathbb D),
$
and that $\varphi^{-1}\circ\psi$ extends continuously to
$\overline{\mathbb D}$. Under these hypotheses, when
$\varphi\not\prec\psi$, we establish the formula
\[
\mathcal R(\varphi,\psi)
=
\min_{|\zeta|=1}
\left|\varphi^{-1}\bigl(\psi(\zeta)\bigr)\right|.
\]
This formula reduces the radius problem to the evaluation of a
boundary minimum; the main analytical task in each application is
to determine this minimum exactly and to establish the sharpness of
the resulting radius.

We emphasize that Theorem~\ref{thm:general-radius} is a reformulation:
the description of the radius through $\operatorname{dist}(0,\partial\Omega)$
with $\Omega=\varphi^{-1}(\psi(\mathbb D))$ underlies earlier work. What the
present paper adds is a precise statement under explicit hypotheses, including
a variant for boundary poles, the passage to $\mathcal{ST}(\varphi)$ and
$\mathcal{CV}(\varphi)$, and the exact evaluation of the boundary minimum for
the lemniscate, lune and lima\c{c}on source functions. Table~\ref{tab:known}
lists, for every pair treated in Sections~3--5, the radius and whether it is
new or known. The known radii are recovered here, and the remaining ones are,
to our knowledge, new. Among them is the sharp value
$\mathcal R(\varphi_{\leftmoon},\psi_{\mathrm{Ne}})=8/15$, which answers a
question left open by Wani and Swaminathan~\cite{MR4129627}.
We apply the radius formula to the three source functions
\[
\varphi_{\mathrm L}(z)=\sqrt{1+z},
\qquad
\varphi_{\leftmoon}(z)=z+\sqrt{1+z^2},
\qquad
\varphi_{\mathrm{Lim}}(z)
=\left(1+\frac{z}{\sqrt2}\right)^2,
\]
which are associated, respectively, with the lemniscate, lune, and
lima\c{c}on-type domains. The target functions considered in the
resulting radius problems include
\[
\psi_R(z)
=
1+\frac{z^2+kz}{k^2-kz},
\quad
k=\sqrt2+1,
\quad
\psi_{car}(z)=1+z+\frac{z^2}{2},
\quad
\psi_e(z)=e^z.
\]
\[
\psi_{\varrho}(z)=\cosh\sqrt z,
\quad
\psi_C(z)
=1+\frac{4z}{3}+\frac{2z^2}{3},
\qquad
\psi_{\mathrm{Ne}}(z)
=1+z-\frac{z^3}{3},
\]
\[
\psi_{3L}(z)
=1+\frac{4z}{5}+\frac{z^4}{5},
\qquad
\psi_{SG}(z)
=\frac{2}{1+e^{-z}},
\qquad
\psi_c(z)=1+ze^z.
\]
as well as the lemniscate, lune, and lima\c{c}on functions
appearing above.
The boundary minima are established analytically using polynomial factorization, positivity arguments, monotonicity, Sturm's theorem, and modulus estimates. The corresponding sharp radii for the associated Ma--Minda starlike and convex classes follow from the general radius result.

We establish the sharp $\mathcal{ST}(\psi_{\mathrm{Ne}})$-radius of
$\mathcal{ST}(\varphi_{\leftmoon})$. Wani and Swaminathan~\cite{MR4129627}
obtained the lower bound
\[
\frac{\sqrt{17}-1}{6}
\]
for this radius. We prove that the sharp value is
\[
\mathcal R(\varphi_{\leftmoon},\psi_{\mathrm{Ne}})
=\frac{8}{15}.
\]

Some of the radii obtained in this way coincide with results
in the literature, which they thereby recover; Table~\ref{tab:known} lists,
for every pair treated in Sections~3--5, the radius and whether it is new or known.

\begin{table}[htbp]
\centering
\small
\renewcommand{\arraystretch}{1.25}
\begin{tabular}{@{}>{\centering\arraybackslash}m{2cm}%
                  >{\centering\arraybackslash}m{2cm}%
                  >{\centering\arraybackslash}m{3.5cm}%
                  >{\centering\arraybackslash}m{3.2cm}@{}}
\hline
$\varphi$ & $\psi$ & $\mathcal R(\varphi,\psi)$ & Status \\
\hline
$\varphi_{\mathrm L}$ & $\psi_e$ & $1-e^{-2}$ & new \\
$\varphi_{\mathrm L}$ & $\psi_{\leftmoon}$ & $2(\sqrt2-1)$ & new \\
$\varphi_{\mathrm L}$ & $\psi_{\varrho}$ & $\sin^2 1$ & new \\
$\varphi_{\mathrm L}$ & $\psi_C$ & $8/9$ & known \cite{MR3536076} \\
$\varphi_{\mathrm L}$ & $\psi_{car}$ & $3/4$ & known \cite{MR4303808} \\
$\varphi_{\mathrm L}$ & $\psi_{\mathrm{Ne}}$ & $8/9$ & known \cite{MR4129627} \\
$\varphi_{\mathrm L}$ & $\psi_{3L}$ & $21/25$ & known \cite{MR4510346} \\
$\varphi_{\mathrm L}$ & $\psi_{\mathrm{Lim}}$ & $3\sqrt2-13/4$ & known \cite{MR4386722} \\
$\varphi_{\mathrm L}$ & $\psi_R$ & $8\sqrt2-11$ & known \cite{MR3496681} \\
$\varphi_{\mathrm L}$ & $\psi_c$ & $2e^{-1}-e^{-2}$ & known \cite{MR4216363} \\
\hline
$\varphi_{\leftmoon}$ & $\psi_{\mathrm{Ne}}$ & $8/15$ & new (improves \cite{MR4129627}) \\
$\varphi_{\leftmoon}$ & $\psi_R$ & $(5-3\sqrt2)/4$ & new \\
$\varphi_{\leftmoon}$ & $\psi_{car}$ & $3/4$ & new \\
$\varphi_{\leftmoon}$ & $\psi_{3L}$ & $3\sqrt{139}/70$ & known \cite{MR4510346} \\
$\varphi_{\leftmoon}$ & $\psi_e$ & $\sin 1$ & known \cite{DevadigaEtAl2026Atlas} \\
$\varphi_{\leftmoon}$ & $\psi_{\mathrm L}$ & $1/(2\sqrt2)$ & known \cite{DevadigaEtAl2026Atlas} \\
\hline
$\varphi_{\mathrm{Lim}}$ & $\psi_C$ & $\sqrt2\,(1-1/\sqrt3)$ & new \\
$\varphi_{\mathrm{Lim}}$ & $\psi_{\mathrm{Ne}}$ & $\sqrt2\,(\sqrt{5/3}-1)$ & new \\
$\varphi_{\mathrm{Lim}}$ & $\psi_R$ & $\sqrt2\,(1-\sqrt{2\sqrt2-2})$ & new \\
$\varphi_{\mathrm{Lim}}$ & $\psi_e$ & $\sqrt2\,(1-1/\sqrt e)$ & new \\
$\varphi_{\mathrm{Lim}}$ & $\psi_{3L}$ & $\sqrt{(24-6\sqrt{15})/5}$ & new \\
$\varphi_{\mathrm{Lim}}$ & $\psi_{SG}$ & $\sqrt2\,(\sqrt{2e/(e+1)}-1)$ & new \\
$\varphi_{\mathrm{Lim}}$ & $\psi_{car}$ & $\sqrt2-1$ & known \cite{MR4303808} \\
\hline
\end{tabular}
\caption{The sharp radii $\mathcal R(\varphi,\psi)$ obtained in Sections~3--5.}
\label{tab:known}
\end{table}

The paper is organized as follows. Section~2 presents the radius
formula, its consequences for the associated Ma--Minda starlike and
convex classes, and sufficient conditions for the inverse
hypotheses. Section~3 establishes ten sharp radius theorems for the
lemniscate-associated source function. Section~4 contains six sharp
radius theorems for the lune-associated source function, while
Section~5 establishes seven sharp radius theorems for the
lima\c{c}on-associated source function. Geometric illustrations of the domains corresponding to the functions
considered in Sections~3--5 are also provided. The final section contains concluding
remarks.

\section{A General Subordination Radius Theorem}

\begin{definition}
Let $\varphi$ and $\psi$ be univalent in $\D$ with
$\varphi(0)=\psi(0)=1$. The $\mathcal P_{\psi}$-radius of
$\mathcal P_{\varphi}$ is defined by
$
\mathcal R(\varphi,\psi)
=
\sup\left\{
r\in(0,1]:
p(rz)\prec\psi(z)
\text{ for every }p\in\mathcal P_{\varphi}
\right\}.
$
\end{definition}

The following theorem gives a formula for the $\mathcal P_{\psi}$-radius
in terms of the inverse function $\varphi^{-1}$, under suitable
analyticity and univalence assumptions.

\begin{theorem}\label{thm:general-radius}
Let $\varphi$ and $\psi$ be univalent functions in $\D$ satisfying
$\varphi(0)=\psi(0)=1$ and $\varphi\not\prec\psi$. Suppose that the
branch of $\varphi^{-1}$ satisfying $\varphi^{-1}(1)=0$ extends
analytically and univalently to a domain containing
$\varphi(\D)\cup\psi(\D)$, and that $\varphi^{-1}\circ\psi$
extends continuously to $\overline{\D}$.

If $p$ is analytic in $\D$ and satisfies $p\prec\varphi$, then
$p(rz)\prec\psi(z)$ for every
$0\le r\le\mathcal R(\varphi,\psi)$, where
\[
\mathcal R(\varphi,\psi)
=
\min_{|z|=1}
\left|\varphi^{-1}(\psi(z))\right|.
\]
The radius $\mathcal R(\varphi,\psi)$ is best possible.
\end{theorem}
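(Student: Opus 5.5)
Let $F$ denote the analytic, univalent extension of the branch of $\varphi^{-1}$ with $F(1)=0$, defined on a domain $G\supset\varphi(\D)\cup\psi(\D)$. Set $h=F\circ\psi$ and $\Omega=h(\D)$. Since $F$ and $\psi$ are univalent, $h$ is univalent in $\D$, $h(0)=F(1)=0$, and by hypothesis $h$ extends continuously to $\overline{\D}$. The plan is to translate the inclusion $\varphi(r\D)\subset\psi(\D)$ into the inclusion $r\D\subset\Omega$ by applying $F$. Then we identify the largest such $r$ with $\operatorname{dist}(0,\partial\Omega)=\min_{|z|=1}|h(z)|$.

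\emph{Step 1 (reduction).} As noted in the introduction, $p(rz)\prec\psi$ for all $p\in\mathcal P_\varphi$ is equivalent to $\varphi(r\D)\subset\psi(\D)$. This uses univalence of $\psi$ and the fact that $p=\varphi\circ\omega$ with $|\omega(rz)|\le r|z|$; the extremal choice is $p=\varphi$. Since $F$ is injective on $G$ and $F\circ\varphi=\mathrm{id}$ on $\D$, we get
\[
\varphi(r\D)\subset\psi(\D)\iff r\D\subset F(\psi(\D))=\Omega .
\]
For the backward implication: if $w\in r\D$, then $w=h(\zeta)=F(\psi(\zeta))$ for some $\zeta\in\D$. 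Also $F(\varphi(w))=w$, so injectivity of $F$ gives $\varphi(w)=\psi(\zeta)\in\psi(\D)$.

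\emph{Step 2 (boundary distance).} Let $m=\min_{|z|=1}|h(z)|$; the minimum exists by continuity on the compact circle. Since $h$ is univalent and continuous up to the boundary, the cluster set of $\Omega$ at its boundary is $h(\partial\D)$, so $\partial\Omega\subset h(\partial\D)$. Conversely, each $h(\zeta)$ with $|\zeta|=1$ lies in $\overline\Omega$. I expect to show that $h(\zeta)\notin\Omega$, since otherwise injectivity would be violated on a neighbourhood. Using the open mapping theorem applied to $h^{-1}$, this gives $\partial\Omega=h(\partial\D)$. Then $\Omega$ is a domain containing $0$ whose boundary is at distance $m$ from $0$, so $\D_m\subset\Omega$. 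Indeed, the disk $\D_m$ is connected, meets $\Omega$ at $0$, and avoids $\partial\Omega$. Hence $p(rz)\prec\psi$ for all $r\le m$.

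\emph{Step 3 (sharpness and the hypothesis $\varphi\not\prec\psi$).} Since $\varphi\not\prec\psi$, we have $\D\not\subset\Omega$ by Step 1, so some point of $\D$ lies outside $\Omega$; as $\Omega$ is bounded by $h(\partial\D)$, this forces $m<1$, so $m$ is a genuine radius in $(0,1)$. Note $m>0$ since $h(\zeta)\ne0=h(0)$, using the boundary injectivity from Step 2. Choose $\zeta_0$ with $|h(\zeta_0)|=m$ and set $w_0=h(\zeta_0)\in\partial\Omega$. For $r>m$ the point $w_0$ lies in $r\D\setminus\Omega$, so $\varphi(w_0)\notin\psi(\D)$ by Step 1. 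Thus $p=\varphi$ gives $p(rz)\not\prec\psi$, since $\varphi(r z_0)=\varphi(w_0)$ with $z_0=w_0/r\in\D$.

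The main obstacle is Step 2, namely justifying $\partial\Omega=h(\partial\D)$, in particular that boundary values do not land inside $\Omega$. I would argue it as follows. If $h(\zeta_0)=h(a)$ with $a\in\D$, take a small disk $U$ around $a$; $h(U)$ is an open neighbourhood of $h(\zeta_0)$. By continuity, $h(z)\in h(U)$ for $z\in\D$ near $\zeta_0$, which contradicts injectivity of $h$ on $\D$. Actually, for the radius only the inclusion $\partial\Omega\subset h(\partial\D)$ is needed for the lower bound. For sharpness I need $w_0\notin\Omega$, and that is exactly this argument.
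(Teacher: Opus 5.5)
Your proposal is correct and follows essentially the same route as the paper. Both arguments reduce to $\varphi(r\D)\subset\psi(\D)$ via Schwarz's lemma and $p=\varphi$, transfer this to $r\D\subset\Omega=\varphi^{-1}(\psi(\D))$ using injectivity of the inverse branch, identify $\partial\Omega=h(\partial\D)$, and conclude $\mathcal R(\varphi,\psi)=\operatorname{dist}(0,\partial\Omega)=\min_{|\zeta|=1}|h(\zeta)|$ with sharpness from $p=\varphi$. The only differences are cosmetic: you exclude $h(\zeta_0)\in\Omega$ by applying the open mapping theorem to $h$ on a small disk about the preimage (chosen with closure in $\D$), where the paper uses continuity of $h^{-1}$ on $\Omega$, which is the same fact; and your ``cluster set'' wording for $\partial\Omega\subset h(\partial\D)$ is better stated as the identity $\overline\Omega=h(\overline\D)$, which follows from compactness.
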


\begin{proof}
The case $r=0$ is trivial since $p(0)=\psi(0)=1$, so let $0<r\le 1$. 
If $p\prec\varphi$, there exists a Schwarz function $\omega$ with $\omega(0)=0$ 
such that $p=\varphi\circ\omega$. By Schwarz's lemma, $\omega(r\D)\subset r\D$, 
and hence $p(r\D)=\varphi(\omega(r\D))\subset\varphi(r\D)$. Since $\psi$ is 
univalent and $p(0)=\psi(0)=1$, the inclusion $\varphi(r\D)\subset\psi(\D)$ 
implies $p(rz)\prec\psi(z)$. Conversely, taking $p=\varphi$ shows that any radius 
valid for all $p\in\mathcal P_{\varphi}$ must satisfy $\varphi(r\D)\subset\psi(\D)$. 
Therefore,
\[
\mathcal R(\varphi,\psi)
=
\sup\left\{r\in(0,1]: \varphi(r\D)\subset\psi(\D)\right\}.
\]

Let $F=\varphi^{-1}\circ\psi$ and $\Omega=F(\D)=\varphi^{-1}(\psi(\D))$. Since the 
branch of $\varphi^{-1}$ with $\varphi^{-1}(1)=0$ is analytic and univalent on a 
domain containing $\varphi(\D)\cup\psi(\D)$, it is injective there and satisfies 
$\varphi^{-1}(\varphi(z))=z$ for $z\in\D$. Consequently, for $z\in\D$, we have 
$\varphi(z)\in\psi(\D)$ if and only if $z\in\Omega$, and thus
\[
\varphi(r\D)\subset\psi(\D)
\quad\Longleftrightarrow\quad
r\D\subset\Omega.
\]
Since $F$ is univalent in $\D$, the set $\Omega$ is open and contains $0=F(0)$. 
Moreover, since $\varphi\not\prec\psi$, there exists $a\in\D$ with 
$\varphi(a)\notin\psi(\D)$, so $a\notin\Omega$. Setting 
$d=\operatorname{dist}(0,\partial\Omega)$, we have $0<d\le|a|<1$. Since $d\D$ is 
connected, contains $0\in\Omega$, and does not meet $\partial\Omega$, we have 
$d\D\subset\Omega$. On the other hand, $\partial\Omega$ is closed and nonempty, 
so it contains a point $w_0$ with $|w_0|=d$; since $\Omega$ is open, 
$w_0\notin\Omega$, and hence $r\D\not\subset\Omega$ for $r>d$. Thus the supremum 
is attained and $\mathcal R(\varphi,\psi)=d$.

It remains to express $d$ in terms of $F$, where we also write $F$ for its 
continuous extension to $\overline{\D}$. By the compactness of $\overline{\D}$ and 
continuity of $F$, we have $\overline{\Omega}=F(\overline{\D})$, so 
$\partial\Omega=F(\overline{\D})\setminus F(\D)\subset F(\partial\D)$. Conversely, 
if $|\zeta|=1$, then $F(\zeta)\in\overline{\Omega}$. If $F(\zeta)\in\Omega$, then 
$F(\zeta)=F(z_0)$ for some $z_0\in\D$. Since $F^{-1}:\Omega\to\D$ is continuous, 
letting $z\to\zeta$ within $\D$ gives $F(z)\to F(z_0)$ and hence $z\to z_0$, 
contradicting $|\zeta|=1>|z_0|$. Thus $F(\partial\D)\subset\partial\Omega$, so 
$\partial\Omega=F(\partial\D)$ and
\[
\mathcal R(\varphi,\psi)
=
\operatorname{dist}(0,\partial\Omega)
=
\min_{|\zeta|=1}|F(\zeta)|
=
\min_{|\zeta|=1}\left|\varphi^{-1}(\psi(\zeta))\right|,
\]
where the last expression denotes the continuous boundary extension of 
$\varphi^{-1}\circ\psi$. Finally, for $\mathcal R(\varphi,\psi)<r\le 1$, we have 
$\varphi(r\D)\not\subset\psi(\D)$, so $p=\varphi$ satisfies 
$p(rz)\not\prec\psi(z)$, showing that $\mathcal R(\varphi,\psi)$ is best possible.
\end{proof}

\begin{remark}\label{rem:boundary-pole}
The boundary-continuity assumption in
Theorem~\ref{thm:general-radius} may be relaxed at isolated boundary
points at which the inverse composition tends to infinity. More
precisely, let
$F=\varphi^{-1}\circ\psi$
be analytic and univalent in $\mathbb D$, and suppose that $F$ extends
continuously to
$\overline{\mathbb D}\setminus E,$
where $E$ is a finite subset of $\partial\mathbb D$, with
$
|F(z)|\to\infty
\;\text{as }z\to\zeta,
\; \zeta\in E.
$
Then the points of $E$ correspond to the point at infinity and hence
do not contribute to the finite distance from the origin to the
boundary of $F(\mathbb D)$. Consequently,
\[
\operatorname{dist}
\bigl(0,\partial F(\mathbb D)\bigr)
=
\min_{\substack{|\zeta|=1\\ \zeta\notin E}}
|F(\zeta)|.
\]
Therefore, in the nontrivial case $\varphi\not\prec\psi$,
\[
\mathcal R(\varphi,\psi)
=
\min_{\substack{|\zeta|=1\\ \zeta\notin E}}
\left|
\varphi^{-1}(\psi(\zeta))
\right|.
\]
\end{remark}
The same radius formula also yields the corresponding radii for the
Ma--Minda starlike and convex classes.

\begin{corollary}\label{cor:maminda-class-radius}
Let $\varphi$ and $\psi$ be Ma--Minda functions satisfying the
hypotheses of Theorem~\ref{thm:general-radius}, and let
$f_r(z)=f(rz)/r$. Then the following assertions hold:
\begin{enumerate}
\item[(i)] if $f\in\mathcal{ST}(\varphi)$, then
$f_r\in\mathcal{ST}(\psi)$ for
$0<r\le\mathcal R(\varphi,\psi)$;

\item[(ii)] if $f\in\mathcal{CV}(\varphi)$, then
$f_r\in\mathcal{CV}(\psi)$ for
$0<r\le\mathcal R(\varphi,\psi)$.
\end{enumerate}
Both radii are sharp. Consequently, the
$\mathcal{ST}(\psi)$-radius of $\mathcal{ST}(\varphi)$ and the
$\mathcal{CV}(\psi)$-radius of $\mathcal{CV}(\varphi)$ are both
equal to $\mathcal R(\varphi,\psi)$.
\end{corollary}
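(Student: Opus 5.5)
The plan is to reduce both assertions to Theorem~\ref{thm:general-radius} by applying it to the natural $\mathcal P_\varphi$ functions attached to $f$.

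For (i), let $f\in\mathcal{ST}(\varphi)$ and set $p(z)=zf'(z)/f(z)$. Then $p$ is analytic in $\D$, $p(0)=1$, and $p\prec\varphi$, so $p\in\mathcal P_\varphi$. For $f_r(z)=f(rz)/r$ a direct computation gives $f_r'(z)=f'(rz)$, and hence
\[
\frac{zf_r'(z)}{f_r(z)}=\frac{rzf'(rz)}{f(rz)}=p(rz).
\]
Theorem~\ref{thm:general-radius} then yields $p(rz)\prec\psi(z)$ for $0<r\le\mathcal R(\varphi,\psi)$. Since $f_r\in\mathcal A$, this says exactly that $f_r\in\mathcal{ST}(\psi)$.

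For (ii) the argument is the same with $q(z)=1+zf''(z)/f'(z)\prec\varphi$. Here $f_r''(z)=rf''(rz)$, so
\[
1+\frac{zf_r''(z)}{f_r'(z)}=1+\frac{rzf''(rz)}{f'(rz)}=q(rz),
\]
and Theorem~\ref{thm:general-radius} applies directly.

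Sharpness needs extremal functions whose associated $p$ is exactly $\varphi$. The natural choice is the standard Ma--Minda extremal functions
\[
k_\varphi(z)=z\exp\int_0^z\frac{\varphi(t)-1}{t}\,dt,
\qquad
zh_\varphi'(z)=k_\varphi(z).
\]
Then $zk_\varphi'/k_\varphi=\varphi$ and $1+zh_\varphi''/h_\varphi'=\varphi$, and both functions lie in $\mathcal A$; one only checks that the integrand is analytic because $\varphi(0)=1$. Now suppose $r>\mathcal R(\varphi,\psi)$. The proof of Theorem~\ref{thm:general-radius} showed $\varphi(r\D)\not\subset\psi(\D)$. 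Hence $\varphi(rz)\not\prec\psi(z)$, because $\psi$ is univalent and subordination would force the image inclusion. So $(k_\varphi)_r\notin\mathcal{ST}(\psi)$ and $(h_\varphi)_r\notin\mathcal{CV}(\psi)$.

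The only point requiring care is this sharpness step, specifically that the extremal functions are well defined and normalized, and that the radius notion for the function classes uses the same dilation $f_r$. Everything else is a direct transfer. The conclusion is that both class radii equal $\mathcal R(\varphi,\psi)$.
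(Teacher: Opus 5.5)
Your proof is correct and follows the paper's argument. The dilation identities $zf_r'/f_r=p(rz)$ and $1+zf_r''/f_r'=q(rz)$ reduce (i) and (ii) to Theorem~\ref{thm:general-radius}, and sharpness comes from extremal functions whose associated expression is exactly $\varphi$; the only difference is that you construct these explicitly as $k_\varphi$ and $h_\varphi$ with $zh_\varphi'=k_\varphi$, whereas the paper merely asserts that suitable $F_\varphi,G_\varphi\in\mathcal A$ exist.
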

\begin{proof}
For $f_r(z)=f(rz)/r$,
\[
\frac{zf_r'(z)}{f_r(z)}
=
\frac{rzf'(rz)}{f(rz)},
\qquad
1+\frac{zf_r''(z)}{f_r'(z)}
=
1+\frac{rzf''(rz)}{f'(rz)}.
\]
Hence both assertions follow directly from
Theorem~\ref{thm:general-radius}.

For sharpness, let $F_{\varphi},G_{\varphi}\in\mathcal A$ satisfy
\[
\frac{zF_{\varphi}'(z)}{F_{\varphi}(z)}=\varphi(z),
\qquad
1+\frac{zG_{\varphi}''(z)}{G_{\varphi}'(z)}=\varphi(z).
\]
A larger admissible radius for either class would imply
$\varphi(rz)\prec\psi(z)$ for some
$r>\mathcal R(\varphi,\psi)$, contradicting
Theorem~\ref{thm:general-radius}.
\end{proof}

We next verify the inverse-extension and boundary-continuity
conditions for the three functions considered in the subsequent
sections.

\begin{lemma}\label{lem:inverse-hypotheses}
Let $\psi$ be a Ma--Minda function that extends continuously
to $\overline{\mathbb D}$. Then the inverse-extension and
boundary-continuity hypotheses of
Theorem~\ref{thm:general-radius} hold for
$\varphi=\varphi_{\mathrm L}$ and
$\varphi=\varphi_{\mathrm{Lim}}$. They also hold for
$\varphi=\varphi_{\leftmoon}$, provided that $\psi$ has no
zeros on $\partial\mathbb D$.
\end{lemma}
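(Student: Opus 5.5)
I will exhibit, for each of the three source functions, an explicit inverse branch and a domain $G$ on which it is analytic and univalent, with $G\supseteq\varphi(\D)\cup\psi(\D)$. Continuity of $\varphi^{-1}\circ\psi$ on $\overline{\D}$ will then follow by composing the continuous extension of $\psi$ with a map that is continuous on $\overline{G}$, or at least on $\psi(\overline{\D})$. The key inputs are that a Ma--Minda $\psi$ satisfies $\operatorname{Re}\psi>0$ on $\D$, hence $\psi(\D)\subset\{\operatorname{Re}w>0\}$, and by continuity $\psi(\overline{\D})\subset\{\operatorname{Re}w\ge 0\}$.

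\emph{Lemniscate.} For $\varphi_{\mathrm L}(z)=\sqrt{1+z}$ the inverse is $\varphi_{\mathrm L}^{-1}(w)=w^2-1$, which satisfies $\varphi_{\mathrm L}^{-1}(1)=0$. It is entire, so continuity of $\varphi_{\mathrm L}^{-1}\circ\psi$ on $\overline{\D}$ is immediate. It is univalent on the right half-plane $H=\{\operatorname{Re}w>0\}$, because $w_1^2=w_2^2$ forces $w_1=\pm w_2$, and $-w_2\notin H$. Since $\varphi_{\mathrm L}(\D)\subset H$ (principal root) and $\psi(\D)\subset H$, the domain $H$ works.

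\emph{Lima\c{c}on.} For $\varphi_{\mathrm{Lim}}(z)=(1+z/\sqrt2)^2$ the inverse is $\varphi_{\mathrm{Lim}}^{-1}(w)=\sqrt2(\sqrt w-1)$ with the principal root, which gives the value $0$ at $w=1$. It is analytic on $\mathbb C\setminus(-\infty,0]$ and injective there, since the principal root is injective. Moreover $\varphi_{\mathrm{Lim}}(\D)\subset H$, because $1+z/\sqrt2$ lies in the disk of radius $1/\sqrt2$ about $1$, which sits inside the sector $|\arg|<\pi/4$; squaring keeps it in $H$. So again $H$ works. For continuity, note that the principal $\sqrt{\cdot}$ extends continuously to $\{\operatorname{Re}w\ge0\}$: the closed half-plane avoids $(-\infty,0)$, and $\sqrt{\cdot}$ is continuous at $0$. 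Composing with the continuous $\psi$ on $\overline{\D}$ then gives the claim.

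\emph{Lune.} For $w=z+\sqrt{1+z^2}$ we have $w-z=\sqrt{1+z^2}$, hence $w^2-2wz=1$ and $z=(w^2-1)/(2w)=\tfrac12(w-1/w)$, which is $0$ at $w=1$. This map is analytic on $\mathbb C\setminus\{0\}$. On $H$ it is injective: $w_1-1/w_1=w_2-1/w_2$ gives $(w_1-w_2)(1+1/(w_1w_2))=0$, so either $w_1=w_2$ or $w_1w_2=-1$, and the latter is impossible for $w_1,w_2\in H$ because $\arg(w_1w_2)\in(-\pi,\pi)$ cannot equal $\pi$. Since $\varphi_{\leftmoon}$ is a Ma--Minda function, $\varphi_{\leftmoon}(\D)\subset H$, so $H$ works. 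For continuity, $\tfrac12(\psi-1/\psi)$ is continuous on $\overline{\D}$ exactly when $\psi$ does not vanish on $\overline{\D}$. It cannot vanish in $\D$, where $\operatorname{Re}\psi>0$, and the hypothesis excludes zeros on $\partial\D$.

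\emph{Main obstacle.} The delicate point is the boundary behaviour. In the lune case, a zero of $\psi$ on $\partial\D$ produces a pole, which is precisely why that hypothesis is imposed; Remark~\ref{rem:boundary-pole} is the tool for handling such poles if they occur. In the lima\c{c}on case, care is needed that boundary values of $\psi$ with $\operatorname{Re}\psi=0$ do not reach the branch cut. I handle this by the observation above that the closed half-plane meets $(-\infty,0]$ only at $0$, where $\sqrt{\cdot}$ is continuous.
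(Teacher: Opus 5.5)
Your proposal is correct and follows essentially the same route as the paper. You use the same explicit inverses $w^2-1$, $(w^2-1)/(2w)$ and $\sqrt2(\sqrt w-1)$ on the right half-plane $\{\operatorname{Re}w>0\}$, the same factorization arguments for univalence (your exclusion of $w_1w_2=-1$ via arguments is equivalent to the paper's observation that $\operatorname{Re}(-1/w_1)<0$), and the same continuity on $\overline{\mathbb D}$ from $\psi(\overline{\mathbb D})\subset\{\operatorname{Re}w\ge0\}$, with slightly more detail than the paper on $\varphi_{\mathrm{Lim}}(\mathbb D)$ lying in the half-plane and on continuity of the principal root at $0$.
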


\begin{proof}
Let
$
\mathbb H=\{w\in\mathbb C:\operatorname{Re}w>0\}.
$
Since $\varphi_{\mathrm L}$, $\varphi_{\leftmoon}$,
$\varphi_{\mathrm{Lim}}$, and $\psi$ are Ma--Minda functions, their
images lie in $\mathbb H$. The normalized inverses are
\[
H_{\mathrm L}(w)=w^2-1,\qquad
H_{\leftmoon}(w)=\frac{w^2-1}{2w},\qquad
H_{\mathrm{Lim}}(w)=\sqrt2(\sqrt w-1),
\]
where $\sqrt w$ is the principal square root. These functions are
analytic on $\mathbb H$ and satisfy $H(1)=0$.

The function $H_{\mathrm L}$ is univalent on $\mathbb H$, since
$H_{\mathrm L}(w_1)=H_{\mathrm L}(w_2)$ implies
$(w_1-w_2)(w_1+w_2)=0$, and $w_1+w_2\ne0$ in $\mathbb H$.
Similarly,
$H_{\leftmoon}(w_1)=H_{\leftmoon}(w_2)$ gives
$
(w_1-w_2)(w_1w_2+1)=0.
$
The alternative $w_2=-1/w_1$ is impossible, since
$\operatorname{Re}(-1/w_1)<0$ for $w_1\in\mathbb H$.
Thus $H_{\leftmoon}$ is univalent on $\mathbb H$.
The univalence of $H_{\mathrm{Lim}}$ follows from the injectivity of
the principal square root on $\mathbb H$.

Since $\psi(\overline{\mathbb D})\subset\overline{\mathbb H}$,
the functions $H_{\mathrm L}\circ\psi$ and
$H_{\mathrm{Lim}}\circ\psi$ extend continuously to
$\overline{\mathbb D}$. Moreover, if $\psi$ has no zeros on
$\partial\mathbb D$, then
$H_{\leftmoon}\circ\psi$ also extends continuously to
$\overline{\mathbb D}$. Hence all the required hypotheses are
satisfied.
\end{proof}

\begin{remark}
Each function $\psi$ considered in Sections~3--5 maps $\mathbb D$ onto a Jordan
domain, that is, $\psi(\partial\mathbb D)$ is a simple closed curve (possibly with
cusps). By Carath\'eodory's theorem \cite[Theorem~2.6]{MR1217706}, $\psi$
then extends to a homeomorphism of $\overline{\mathbb D}$ onto
$\overline{\psi(\mathbb D)}$, which gives the continuity of $\psi$ on
$\overline{\mathbb D}$ assumed in Lemma~\ref{lem:inverse-hypotheses}. The
continuity of $\varphi^{-1}\circ\psi$ on $\overline{\mathbb D}$ then follows from
the explicit inverses in the proof of that lemma.
\end{remark}

Combining Theorem~\ref{thm:general-radius} with
Lemma~\ref{lem:inverse-hypotheses} gives the following explicit
formulas for the three functions considered in the subsequent
sections.

\begin{corollary}\label{cor:three-radius-formulas}
Let $\psi$ be a Ma--Minda function that extends continuously
to $\overline{\mathbb D}$. The following formulas hold under
the indicated assumptions.

\begin{enumerate}
\renewcommand{\labelenumi}{(\roman{enumi})}

\item Let $\varphi_{\mathrm L}(z)=\sqrt{1+z}$ and suppose that
$\varphi_{\mathrm L}\not\prec\psi$. Then
\begin{equation}\label{R1-formula}
\mathcal R(\varphi_{\mathrm L},\psi)
=
\min_{|z|=1}|\psi(z)^2-1|
=
\min_{-\pi\le\theta\le\pi}
|\psi(e^{i\theta})^2-1|.
\end{equation}

\item Let
$\varphi_{\leftmoon}(z)=z+\sqrt{1+z^2}$ and suppose that
$\varphi_{\leftmoon}\not\prec\psi$ and
$\psi(z)\ne0$ on $\partial\mathbb D$. Then
\begin{equation}\label{R2-formula}
\mathcal R(\varphi_{\leftmoon},\psi)
=
\min_{|z|=1}
\left|
\frac{\psi(z)^2-1}{2\psi(z)}
\right|
=
\min_{-\pi\le\theta\le\pi}
\left|
\frac{\psi(e^{i\theta})^2-1}
{2\psi(e^{i\theta})}
\right|.
\end{equation}

\item Let
$\varphi_{\mathrm{Lim}}(z)=(1+z/\sqrt2)^2$ and suppose that
$\varphi_{\mathrm{Lim}}\not\prec\psi$. Then
\begin{equation}\label{R3-formula}
\mathcal R(\varphi_{\mathrm{Lim}},\psi)
=
\sqrt2\min_{|z|=1}
\left|\sqrt{\psi(z)}-1\right|
=
\sqrt2\min_{-\pi\le\theta\le\pi}
\left|\sqrt{\psi(e^{i\theta})}-1\right|.
\end{equation}

\end{enumerate}
In \eqref{R3-formula}, the square-root branch is chosen so that
$\sqrt{\psi(0)}=1$.
\end{corollary}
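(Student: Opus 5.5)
This corollary is a direct specialization of Theorem~\ref{thm:general-radius}, with Lemma~\ref{lem:inverse-hypotheses} supplying its hypotheses. The plan is to treat each of the three source functions in turn. In each case we check that the hypotheses hold, identify the branch of $\varphi^{-1}$ with $\varphi^{-1}(1)=0$, and substitute it into the boundary-minimum formula.

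First, fix $\psi$ Ma--Minda and continuous on $\overline{\mathbb D}$. For (i) and (iii), Lemma~\ref{lem:inverse-hypotheses} gives the inverse-extension and boundary-continuity hypotheses directly. For (ii) the same lemma applies, because we assume $\psi(z)\ne0$ on $\partial\mathbb D$. The standing assumption $\varphi\not\prec\psi$ is part of each statement, so Theorem~\ref{thm:general-radius} applies. It gives
\[
\mathcal R(\varphi,\psi)=\min_{|z|=1}|\varphi^{-1}(\psi(z))|,
\]
where $\varphi^{-1}\circ\psi$ denotes the continuous extension to $\overline{\mathbb D}$.

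Next, we confirm that the explicit inverses $H_{\mathrm L}$, $H_{\leftmoon}$, $H_{\mathrm{Lim}}$ from the proof of Lemma~\ref{lem:inverse-hypotheses} are exactly the branches used in the theorem.
\begin{itemize}
\item For the lemniscate, $w=\sqrt{1+z}$ gives $z=w^2-1$.
\item For the lune, $w=z+\sqrt{1+z^2}$ gives $(w-z)^2=1+z^2$. Hence $w^2-2wz=1$, so $z=(w^2-1)/(2w)$.
\item For the lima\c{c}on, $w=(1+z/\sqrt2)^2$ gives $z=\sqrt2(\sqrt w-1)$. Here we take the principal root, which equals $1$ at $w=1$. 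This is the branch with $\sqrt{\psi(0)}=1$, as stated.
\end{itemize}
Each of these satisfies $H(1)=0$ and is analytic and univalent on $\mathbb H\supset\varphi(\mathbb D)\cup\psi(\mathbb D)$. On $\mathbb H$ each agrees with the inverse of $\varphi$, because $H\circ\varphi=\mathrm{id}$ on $\mathbb D$ and $\mathbb H$ is connected. Substituting gives the first equality in each of \eqref{R1-formula}, \eqref{R2-formula}, \eqref{R3-formula}. For \eqref{R3-formula} we also pull the constant factor $\sqrt2$ out of the modulus.

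The second equality in each display is just the parametrization $z=e^{i\theta}$, $\theta\in[-\pi,\pi]$. The minimum is attained because it is the minimum of a continuous function on a compact set.

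The only point needing care is the boundary values. Since $\psi(\partial\mathbb D)\subset\overline{\mathbb H}$, the formulas must still make sense there.
\begin{itemize}
\item $w^2-1$ is entire, so (i) needs no further check.
\item $(w^2-1)/(2w)$ is continuous wherever $w\ne0$, which is why (ii) requires $\psi\ne0$ on $\partial\mathbb D$.
\item The principal $\sqrt w$ extends continuously to $\overline{\mathbb H}$, including $0$ and the imaginary axis, since the latter lies away from the branch cut $(-\infty,0]$.
\end{itemize}
So the expressions written are precisely the continuous extensions to which Theorem~\ref{thm:general-radius} refers, and the corollary follows.
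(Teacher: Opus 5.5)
Your proposal is correct and follows the paper's route. The paper obtains this corollary simply by combining Theorem~\ref{thm:general-radius} with Lemma~\ref{lem:inverse-hypotheses} and substituting the explicit inverses $H_{\mathrm L}$, $H_{\leftmoon}$, $H_{\mathrm{Lim}}$; your extra checks (the branch identification, uniqueness on the connected set $\mathbb H$, and boundary continuity) only make explicit what the paper leaves implicit, though note that the imaginary axis does meet the cut $(-\infty,0]$ at $w=0$, so continuity of $\sqrt{w}$ there comes from $|\sqrt w|=\sqrt{|w|}\to0$ rather than from distance to the cut.
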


In the applications below, the non-subordination condition can
be verified from the boundary minimum itself. Indeed, under the
inverse-extension and boundary-continuity hypotheses, put
$\Omega=\varphi^{-1}(\psi(\mathbb D))$. If
$\varphi\prec\psi$, then $\mathbb D\subset\Omega$, and hence
\[
\min_{|\zeta|=1}
\left|\varphi^{-1}(\psi(\zeta))\right|
=
\operatorname{dist}(0,\partial\Omega)
\ge1.
\]
Therefore, a boundary minimum strictly less than $1$ implies
$\varphi\not\prec\psi$. The computations in the subsequent
sections verify this inequality for every pair to which
Corollary~\ref{cor:three-radius-formulas} is applied.

For the boundary calculations in the following sections, we use the
standard Chebyshev polynomial identities. Setting $z=e^{i\theta}$
and $x=\cos\theta\in[-1,1]$, we have
\[
z^n+z^{-n}=2T_n(x),
\]
where $T_n$ denotes the $n$th Chebyshev polynomial of the first
kind. In particular,
\[
\begin{aligned}
z+z^{-1}&=2x,\\
z^2+z^{-2}&=4x^2-2,\\
z^3+z^{-3}&=8x^3-6x,\\
z^4+z^{-4}&=16x^4-16x^2+2.
\end{aligned}
\]
These identities will be used to express the boundary-modulus
functions as real polynomials in $x$.
\section{Sharp Radii for the Lemniscate-Associated Class}

In this section, we determine the sharp $\mathcal P_{\psi}$-radii of
$\mathcal P_{\varphi_{\mathrm L}}$ for ten choices of $\psi$.
The corresponding sharp radii for the associated Ma--Minda starlike
and convex classes follow from Corollary~\ref{cor:maminda-class-radius}.

The exponential starlike and convex classes were introduced by
Mendiratta \textit{et al.}~\cite{MR3394060} in 2015 as special cases
of the Ma--Minda classes associated with the function
$\psi_e(z)=e^z$.
The following theorem determines the sharp inclusion radius of the
lemniscate-associated class in the exponential class.

\begin{theorem}
The $\mathcal{P}_{\psi_e}$-radius of the class
$\mathcal{P}_{\varphi_{\mathrm L}}$ is
\[
\mathcal{R}(\varphi_{\mathrm L},\psi_e)
=
1-e^{-2}
\approx 0.864665.
\]
\end{theorem}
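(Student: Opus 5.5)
We apply Corollary~\ref{cor:three-radius-formulas}(i) with $\psi=\psi_e$. The exponential function is a Ma--Minda function that is continuous (indeed entire) on $\overline{\mathbb D}$, so Lemma~\ref{lem:inverse-hypotheses} supplies the inverse-extension and boundary-continuity hypotheses. It therefore suffices to compute
\[
\min_{-\pi\le\theta\le\pi}\bigl|e^{2e^{i\theta}}-1\bigr|
\]
and to check that this minimum is less than $1$, which in turn gives $\varphi_{\mathrm L}\not\prec\psi_e$ by the remark following that corollary.

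Write $e^{2e^{i\theta}}=\rho e^{i\alpha}$ with $\rho=e^{2\cos\theta}$ and $\alpha=2\sin\theta$. Then
\[
g(\theta):=\bigl|e^{2e^{i\theta}}-1\bigr|^2=\rho^2-2\rho\cos\alpha+1=(\rho-1)^2+2\rho(1-\cos\alpha).
\]
Both terms are nonnegative. At $\theta=\pi$ we have $\alpha=0$ and $\rho=e^{-2}$, so $g(\pi)=(1-e^{-2})^2$. The claim is therefore that $g(\theta)\ge(1-e^{-2})^2$ for all $\theta$. By the symmetry $\theta\mapsto-\theta$ we may restrict to $\theta\in[0,\pi]$.

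The main obstacle is proving this global lower bound, because $\rho$ and $\alpha$ vary simultaneously. I would split into ranges of $\theta$.

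\textbf{The range $\cos\theta\ge0$.} Here $\rho\ge1$ and $|\alpha|\le2$. Use $|e^w-1|\ge|e^{\operatorname{Re}w}-1|$, which is $\ge0$, together with a direct estimate: near $\theta=\pi/2$ we have $\rho\approx1$, and
\[
g\ge2\rho(1-\cos 2\sin\theta)\ge2(1-\cos 2\sin\theta),
\]
which is large unless $\sin\theta$ is small. In that case $\cos\theta$ is near $1$ and $\rho-1\ge e^{2\cos\theta}-1$ is large. Concretely, I would show $g\ge1$ on this range by treating $\sin\theta\ge1/2$ and $\sin\theta\le1/2$ separately.

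\textbf{The range $\cos\theta<0$.} Here $\rho<1$. Use the inequality $1-\cos\alpha\ge\frac{2\alpha^2}{\pi^2}$ for $|\alpha|\le\pi$, noting that $|\alpha|\le2<\pi$. Set $x=\cos\theta\in[-1,0]$, so that $\sin^2\theta=1-x^2$. Then
\[
g\ge(1-e^{2x})^2+\frac{16}{\pi^2}\,e^{2x}(1-x^2)=:h(x).
\]
Since $h(-1)=(1-e^{-2})^2$, it remains to show that $h$ is nondecreasing on $[-1,0]$, or at least that $h(x)\ge h(-1)$ there.

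The difference $h(x)-h(-1)$ consists of two parts. The first is the decrease in $(1-e^{2x})^2$, which is at most $2(e^{2x}-e^{-2})$. The second is the term $\frac{16}{\pi^2}e^{2x}(1-x^2)$. Comparing these near $x=-1$ should be enough: $1-x^2\approx2(1+x)$ while $e^{2x}-e^{-2}\approx2e^{-2}(1+x)$, so the added term dominates. I would finish the whole interval by a derivative sign check on $h$. If the constant $16/\pi^2$ turns out too crude at some intermediate $x$, I would instead use the exact expression $1-\cos\alpha=2\sin^2(\sin\theta)$ together with $\sin t\ge t\sin1$ for $t\in[0,1]$.

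Combining the two ranges gives
\[
\min_{\theta}\sqrt{g(\theta)}=1-e^{-2}<1.
\]
Corollary~\ref{cor:three-radius-formulas}(i) then yields the radius $1-e^{-2}$. Sharpness follows from Theorem~\ref{thm:general-radius}, with the extremal point $z=-1$ corresponding to $p=\varphi_{\mathrm L}$ at $r=1-e^{-2}$.
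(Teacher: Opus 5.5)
Your reduction to $\min_\theta|e^{2e^{i\theta}}-1|$ and your treatment of the range $\cos\theta\ge0$ are fine. One small point there: with only $\rho\ge1$, the subcase $\sin\theta\ge\tfrac12$ gives $g\ge4\sin^2\tfrac12\approx0.92$, not $g\ge1$. That is still above $(1-e^{-2})^2\approx0.748$, which is all you need.

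\textbf{The gap is in the range $\cos\theta<0$.} The minorant $h$ built from $1-\cos\alpha\ge2\alpha^2/\pi^2$ does \emph{not} satisfy $h(x)\ge h(-1)$. Indeed,
\[
h'(x)=2e^{2x}\Bigl[\tfrac{16}{\pi^2}(1-x-x^2)-2(1-e^{2x})\Bigr],
\qquad
h'(-1)=2e^{-2}\Bigl[\tfrac{16}{\pi^2}-2(1-e^{-2})\Bigr]<0,
\]
since $16/\pi^2\approx1.621<1.729\approx2(1-e^{-2})$. Numerically, $h(-0.99)\approx0.74738<0.74765\approx h(-1)$. So the step does not fail at some intermediate $x$; it fails right at the endpoint where the minimum sits.

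Your heuristic went wrong because it dropped the factor $\rho=e^{2x}\approx e^{-2}$ in the added term. That term is $\approx\frac{32}{\pi^2}e^{-2}(1+x)$, while the loss in $(1-e^{2x})^2$ is $\approx4e^{-2}(1-e^{-2})(1+x)$, and $32/\pi^2\approx3.24<3.46$. The bound $1-\cos\alpha\ge2\alpha^2/\pi^2$ loses a factor $\pi^2/4$ precisely as $\alpha\to0$, that is, near $\theta=\pi$.

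\textbf{Your fallback is what is actually required, and it works.} Write $2\rho(1-\cos\alpha)=4\rho\sin^2(\sin\theta)\ge4\sin^2(1)\,e^{2x}(1-x^2)$. Then $g\ge h_1(x):=(1-e^{2x})^2+4\sin^2(1)\,e^{2x}(1-x^2)$, and
\[
h_1'(x)=2e^{2x}\bigl[4\sin^2(1)(1-x-x^2)-2(1-e^{2x})\bigr]\ge2e^{2x}\bigl(4\sin^21-2\bigr)=-4e^{2x}\cos2>0
\]
on $[-1,0]$, because $1-x-x^2=1-x(1+x)\ge1$ there and $1-e^{2x}<1$. Hence $g\ge h_1(-1)=(1-e^{-2})^2$, with equality only at $\theta=\pm\pi$. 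Your proof is complete once this replaces the $16/\pi^2$ step.

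\textbf{Comparison with the paper.} The paper proves $F'(\theta)<0$ on $(0,\pi)$ directly, via $\sin(\theta+2\sin\theta)<\sin\theta\,e^{2\cos\theta}$, and so gets monotonicity of the true boundary modulus. Your corrected route instead minorizes $g$ by a one-variable function of $x=\cos\theta$ and checks that the minorant is monotone. This avoids the trigonometric derivative estimate. However, as the failure above shows, it only works when the constant in front of $e^{2x}(1-x^2)$ exceeds $2(1-e^{-2})$.
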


\begin{proof}

For $\psi_e(z)=e^z$, it follows from \eqref{R1-formula} that
\[
\mathcal{R}(\varphi_{\mathrm L},\psi_e)
=
\min_{-\pi\le\theta\le\pi}
|\psi_e(e^{i\theta})^2-1|.
=
\min_{-\pi\le\theta\le\pi}
|e^{2e^{i\theta}}-1|.
\]
Consider the squared modulus
\begin{equation}\label{exp-eq1}
F(\theta)
=
|e^{2e^{i\theta}}-1|^2
=
e^{4\cos\theta}
-2e^{2\cos\theta}\cos(2\sin\theta)
+1.
\end{equation}
Since $F$ is even on $[-\pi,\pi]$, it is sufficient to minimize it
on $[0,\pi]$. Differentiating, we obtain
\[
F'(\theta)
=
-4e^{2\cos\theta}
\left[
\sin\theta\,e^{2\cos\theta}
-
\sin(\theta+2\sin\theta)
\right].
\]
For $0<\theta\le\pi/2$, using
$\sin u<u$ for $u>0$ and $1+t\le e^t$, we have
\begin{align*}
\sin(\theta+2\sin\theta)
&=
\sin\theta\cos(2\sin\theta)
+\cos\theta\sin(2\sin\theta)\\
&<
\sin\theta+2\cos\theta\sin\theta\\
&=
\sin\theta(1+2\cos\theta)\\
&\le
\sin\theta\,e^{2\cos\theta}.
\end{align*}
For $\pi/2<\theta<\pi$, set $u=\pi-\theta\in(0,\pi/2)$. By the
concavity of $\sin u$ on $[0,\pi/2]$,
$
\sin u\ge(2u/\pi),
$
and hence $2\sin u>u$. Therefore,
$\theta+2\sin\theta=\pi-u+2\sin u>\pi$.
Also, $\theta+2\sin\theta<\pi+2<2\pi$, and consequently
\[
\sin(\theta+2\sin\theta)<0
<
\sin\theta\,e^{2\cos\theta}.
\]
Thus,
$\sin\theta\,e^{2\cos\theta}
-\sin(\theta+2\sin\theta)>0$
for $0<\theta<\pi$. Hence $F'(\theta)<0$ on $(0,\pi)$, so
$F$ is strictly decreasing on $[0,\pi]$. Therefore, the minimum is
attained at $\theta=\pi$, where
$F(\pi)=e^{-4}-2e^{-2}+1=(1-e^{-2})^2$.
Consequently,
\[
\mathcal{R}(\varphi_{\mathrm L},\psi_e)
=
\sqrt{F(\pi)}
=
1-e^{-2}.
\]
Since the minimum is attained at $z_0=-1$, we have
$\psi_e(z_0)=e^{-1}$. Let $r_0=1-e^{-2}$. Then
$\varphi_{\mathrm L}(-r_0)=\sqrt{1-r_0}
=e^{-1}=\psi_e(-1)$.
Hence the radius is sharp.
\end{proof}

The lune starlike and convex classes were introduced by
Raina and Sokół~\cite{MR3419845} in 2015 as Ma--Minda classes
associated with the function
$
\psi_{\leftmoon}(z)=z+\sqrt{1+z^2}.
$
In particular,
$\mathcal{ST}_{\leftmoon}=\mathcal{ST}(\psi_{\leftmoon})$ and
$\mathcal{CV}_{\leftmoon}=\mathcal{CV}(\psi_{\leftmoon})$.
The image of $\mathbb D$ under $\psi_{\leftmoon}$ is the lune-shaped domain
\[
\left\{
w\in\mathbb C:
\operatorname{Re}w>0,\; |w^2-1|<2|w|
\right\}.
\]
The following theorem determines the sharp inclusion radius of the
lemniscate-associated class in the lune class.
Writing $z=e^{i\theta}$ and $x=\cos\theta$, we divide
the boundary into two portions according to the sign of $x$.
For $x \in [0, 1]$, which corresponds to $\theta \in [-\pi/2, \pi/2]$, the curve traces the major arc of the lune. Its parametric representation is given by:
$$\psi_{{\leftmoon}_{\text{major}}}(x) = \bigl(x+\sqrt{x(1+x)}\bigr) \pm i\bigl(\sqrt{1-x^2}+\sqrt{x(1-x)}\bigr).$$
For $x \in [-1, 0]$, which corresponds to the remaining intervals $\theta \in [-\pi, -\pi/2] \cup [\pi/2, \pi]$, the curve traces the minor arc, represented by:
\[
\psi_{{\leftmoon}_{\text{minor}}}(x) = \bigl(x+\sqrt{-x(1-x)}\bigr) \pm i\bigl(\sqrt{1-x^2}-\sqrt{-x(1+x)}\bigr).
\]

\begin{theorem}
The $\mathcal{P}_{\psi_{\leftmoon}}$-radius of the class $\mathcal{P}_{\varphi_{\mathrm{L}}}$ is
\[
\mathcal{R}(\varphi_{\mathrm{L}},\psi_{\leftmoon})=2(\sqrt2-1) \approx 0.828427.
\]
\end{theorem}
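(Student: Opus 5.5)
By \eqref{R1-formula} of Corollary~\ref{cor:three-radius-formulas}, applied with $\psi=\psi_{\leftmoon}$ (a Ma--Minda function, continuous on $\overline{\mathbb D}$), it suffices to compute
\[
\min_{|z|=1}\bigl|\psi_{\leftmoon}(z)^2-1\bigr|
\]
and check that it is less than $1$. The key simplification is algebraic. Writing $w=\psi_{\leftmoon}(z)=z+\sqrt{1+z^2}$, we have $(w-z)^2=1+z^2$, hence $w^2-1=2zw$. Therefore, on $|z|=1$,
\[
|\psi_{\leftmoon}(z)^2-1|=2|z|\,|\psi_{\leftmoon}(z)|=2|\psi_{\leftmoon}(z)|,
\]
which is also the defining relation $|w^2-1|=2|w|$ of the lune boundary. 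So the problem reduces to minimizing $|\psi_{\leftmoon}(e^{i\theta})|$, that is, to finding the point of the lune boundary nearest to the origin.

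I will do this with the parametrizations given before the statement, with $x=\cos\theta$. On the major arc, $x\ge0$, the real part $x+\sqrt{x(1+x)}$ is nonnegative and the imaginary part $\sqrt{1-x^2}+\sqrt{x(1-x)}$ is at least $\sqrt{1-x^2}$. Hence $|w|^2\ge x^2+1-x^2=1$ there. On the minor arc, $x\le0$, I compute $|w|^2$ directly. Alternatively, I will use that the lune is bounded by the two circles $|w\pm i|=\sqrt2$ (from $|w^2-1|=2|w|$). The minor arc lies on the circle of radius $\sqrt2$ centered at $\pm i$, on the side of the origin. Its distance to $0$ is $\sqrt2-1$, attained at $w=\pm(\sqrt2-1)i$? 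That point has real part $0$, so it lies on the boundary only at the endpoint $x=0$. At $x=0$ the parametrization gives $w=\pm i$, so I need to recheck this case.

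The main obstacle is precisely this geometric identification. I therefore expect to compute $|w|^2$ on the minor arc explicitly as a function of $x\in[-1,0]$, in the form $g(x)=\bigl(x+\sqrt{-x(1-x)}\bigr)^2+\bigl(\sqrt{1-x^2}-\sqrt{-x(1+x)}\bigr)^2$, and to show that it is monotone, so that its minimum occurs at $x=-1$. At $x=-1$, that is $z=-1$, we get $w=\sqrt2-1$. Then $|w^2-1|=2(\sqrt2-1)$, which matches the claimed value. To handle the radicals, I will substitute $x=-t$ and simplify $g$, expecting the cross terms to combine so that $g$ is decreasing in $t$. If that proves messy, I will use the circle description instead: the minor arc lies on $|w-i|=\sqrt2$ (for the upper half), between $w=\sqrt2-1$ and $w=i$. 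On that circle, $|w|^2=2\operatorname{Im}w+1$, since $|w|^2-2\operatorname{Im}w-1=0$. This is minimized where $\operatorname{Im}w$ is smallest, namely at the real endpoint $w=\sqrt2-1$. This second route is cleaner, so I will use it, having checked that the arcs lie on these circles.

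Finally, the minimum $2(\sqrt2-1)\approx0.83<1$ gives $\varphi_{\mathrm L}\not\prec\psi_{\leftmoon}$, as noted after Corollary~\ref{cor:three-radius-formulas}. For sharpness, with $r_0=2(\sqrt2-1)$ we have $\varphi_{\mathrm L}(-r_0)=\sqrt{1-r_0}=\sqrt{3-2\sqrt2}=\sqrt2-1=\psi_{\leftmoon}(-1)$. So the boundary of $\varphi_{\mathrm L}(r_0\mathbb D)$ touches the boundary of the lune, and Theorem~\ref{thm:general-radius} gives the sharp radius.
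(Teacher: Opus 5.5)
Your overall plan matches the paper's: reduce to $2\min_{|z|=1}|\psi_{\leftmoon}(z)|$ via $w^2-1=2zw$, show $|w|^2\ge1$ on the major arc, check the minimum is below $1$, and get sharpness at $z=-1$. The gap is in the minor-arc minimization. You flag it as the main obstacle and then settle it with the route you call cleaner, but that route rests on a false identification of the boundary circles.

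With $w=u+iv$, the boundary relation $|w^2-1|^2=4|w|^2$ reads $(u^2+v^2)^2-6u^2-2v^2+1=0$. This factors as $\bigl(|w-1|^2-2\bigr)\bigl(|w+1|^2-2\bigr)=0$. The corresponding product for $|w\pm i|=\sqrt2$ is $(u^2+v^2)^2-2u^2-6v^2+1$, which is different. So the lune is bounded by arcs of $|w-1|=\sqrt2$ and $|w+1|=\sqrt2$, not $|w\pm i|=\sqrt2$.

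Neither endpoint of your proposed arc lies on $|w-i|=\sqrt2$: indeed $|i-i|=0$ and $|\sqrt2-1-i|^2=4-2\sqrt2$. Your own formula $|w|^2=2\operatorname{Im}w+1$ also gives the problem away: at the real endpoint it yields $|\sqrt2-1|^2=1$, contradicting $|\sqrt2-1|^2=3-2\sqrt2$. The inconsistency you noticed, that the nearest point $-(\sqrt2-1)i$ has zero real part, was a symptom of this. As written, the minimum on the minor arc is not established.

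The repair is short. The minor arc runs from $i$ through $\psi_{\leftmoon}(-1)=\sqrt2-1$ to $-i$. It lies on $|w+1|=\sqrt2$, either by the factorization and connectedness or directly from the parametrization. On that circle $|w|^2=1-2\operatorname{Re}w$. Since $\operatorname{Re}w\le\sqrt2-1$ on the circle, we get $|w|^2\ge 3-2\sqrt2$, with equality at $w=\sqrt2-1$, that is, at $z=-1$.

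Alternatively, your first route works and is exactly the paper's. The cross terms in $g$ combine to $g(x)=1-2x-2\sqrt{x^2-x}$, which is $1-2\operatorname{Re}w$. With $x=-t$, one has $\frac{d}{dt}\bigl(1+2t-2\sqrt{t^2+t}\bigr)=2-\frac{2t+1}{\sqrt{t^2+t}}<0$, because $(2t+1)^2>4(t^2+t)$. The paper phrases this as $F'$ having no zero in $(-1,0)$ and then compares $F(0)=1$ with $F(-1)=3-2\sqrt2$.

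A minor point on the major arc: to get $(\operatorname{Re}w)^2\ge x^2$ you need $\operatorname{Re}w\ge x\ge0$, not merely $\operatorname{Re}w\ge0$.
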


\begin{proof}
Given $\psi_{\leftmoon}(z)=z+\sqrt{1+z^2}$, then by \eqref{R1-formula},
\[
\mathcal{R}(\varphi_{\mathrm{L}},\psi_{\leftmoon})
=\min_{|z|=1}\left| \psi_{\leftmoon}(z) ^{2}-1\right|=\min_{|z|=1}\left|\left( z+\sqrt{1+z^2}\right) ^{2}-1\right|
\]
Since
$
\left(z+\sqrt{1+z^2}\right)^2-1
=2z\,\psi_{\leftmoon}(z),
$ set $z=e^{i\theta}, \theta \in [-\pi, \pi]$
we obtain
\begin{equation}\label{lune-eq1}
\mathcal{R}(\varphi_{\mathrm L},\psi_{\leftmoon})
=2\min_{-\pi\le\theta\le\pi}
\left|\psi_{\leftmoon}(e^{i\theta})\right|.
\end{equation}
For $x\in[0,1]$, the boundary parametrization gives
$
|\psi_{\leftmoon}(e^{i\theta})|^2
=
1+2x+2\sqrt{x^2+x}
\ge1.
$
On the other hand,
$
|\psi_{\leftmoon}(-1)|^2=(\sqrt2-1)^2<1.
$
Therefore, the global minimum occurs on the boundary
portion corresponding to $x\in[-1,0]$.
Using the minor arc 
representation,
\[
\psi_{\leftmoon}(e^{i\theta})
=
\bigl(x+\sqrt{-x(1-x)}\bigr)
\pm
i\bigl(\sqrt{1-x^2}-\sqrt{-x(1+x)}\bigr).
\]
Thus, setting
$
F(x):=|\psi_{\leftmoon}(e^{i\theta})|^2
=
1-2x-2\sqrt{x^2-x},
$
we minimize $F$ on $[-1,0]$. Differentiating, we obtain
\[
F'(x)
=
-2-\frac{2x-1}{\sqrt{x^2-x}},
\qquad -1<x<0.
\]
Setting $F'(x)=0$ gives
$
-2\sqrt{x^2-x}=2x-1
\quad\Longrightarrow\quad
4(x^2-x)=(2x-1)^2,
$
which reduces to the contradiction $0=1$. Hence $F$ has no critical point in
$(-1,0)$, so its minimum on $[-1,0]$ occurs at an endpoint. Since
$
F(0)=1,
F(-1)=3-2\sqrt2=(\sqrt2-1)^2,
$
we obtain, by \eqref{lune-eq1},
\[
\mathcal{R}(\varphi_{\mathrm{L}},\psi_{\leftmoon})
=
2\min_{-\pi \le \theta \le \pi}\left|\psi_{\leftmoon}(e^{i\theta})\right|
=
2(\sqrt2-1).
\]
Let $r_0=2(\sqrt2-1)$. At the boundary point $z_0=-1$,
$\psi_{\leftmoon}(z_0)=\sqrt2-1,$ while $\varphi_{\mathrm L}(-r_0)=\sqrt{1-r_0}=\sqrt2-1.$
Hence
$\varphi_{\mathrm L}(-r_0)=\psi_{\leftmoon}(-1),$
and therefore the radius is sharp.
\end{proof}

In 2023, Mundalia and Kumar~\cite{MR4620985} introduced the
Ma--Minda classes associated with the hyperbolic cosine function
$
\psi_{\varrho}(z)=\cosh\sqrt{z}.
$
The corresponding starlike and convex classes are denoted by
$\mathcal{ST}_{\varrho}=\mathcal{ST}(\psi_{\varrho})$ and
$\mathcal{CV}_{\varrho}=\mathcal{CV}(\psi_{\varrho})$, respectively.
The following theorem determines the sharp inclusion radius of the
lemniscate-associated class in the hyperbolic cosine class.

\begin{theorem}
The $\mathcal{P}_{\psi_{\varrho}}$-radius of the class $\mathcal{P}_{\varphi_{\mathrm{L}}}$ is
\[
\mathcal{R}(\varphi_{\mathrm{L}},\psi_{\varrho})=\sin^2(1) \approx 0.708073.
\]
\end{theorem}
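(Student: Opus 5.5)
We apply Corollary~\ref{cor:three-radius-formulas}(i), i.e.\ formula \eqref{R1-formula}, with $\psi=\psi_{\varrho}$. The function $\psi_{\varrho}(z)=\cosh\sqrt z$ is entire in $z$, because $\cosh\sqrt z=\sum_n z^n/(2n)!$. Hence it extends continuously to $\overline{\mathbb D}$, so Lemma~\ref{lem:inverse-hypotheses} applies. Non-subordination will follow from the computed minimum being $<1$, as explained after Corollary~\ref{cor:three-radius-formulas}.

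The key simplification is the identity $\cosh^2 w-1=\sinh^2 w$. It gives
\[
\mathcal R(\varphi_{\mathrm L},\psi_{\varrho})=\min_{|z|=1}\bigl|\sinh\sqrt z\bigr|^2 .
\]
Put $z=e^{i\theta}$ with $\theta\in[-\pi,\pi]$ and $t=\theta/2\in[-\pi/2,\pi/2]$. Then $\sqrt z=a+ib$ with $a=\cos t\ge0$, $b=\sin t$ and $a^2+b^2=1$. The standard formula $|\sinh(a+ib)|^2=\sinh^2a+\sin^2 b$ reduces the problem to a one-variable inequality: we must minimize
\[
G=\sinh^2 a+\sin^2 b,\qquad a^2+b^2=1,\ a\ge0 .
\]
At $a=0$, $b=\pm1$ we get $G=\sin^2 1$. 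We therefore aim to prove $G\ge\sin^2 1$ throughout.

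For the lower bound we use two elementary estimates.
\begin{enumerate}
\item[(a)] $\sinh^2 a\ge a^2$ for $a\ge 0$.
\item[(b)] $\sin^2 1-\sin^2 b=\sin(1-b)\sin(1+b)$, with $0\le 1\pm b\le 2$.
\end{enumerate}
In (b), both factors are nonnegative, since $1\pm b\in[0,2]\subset[0,\pi]$. The inequality $\sin u\le u$ for $u\ge0$ then gives
\[
\sin^2 1-\sin^2 b\le(1-b)(1+b)=1-b^2=a^2\le\sinh^2 a .
\]
Hence $G\ge\sin^2 1$, with equality exactly when $a=0$, i.e.\ $\theta=\pm\pi$. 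This step is the crux; the only care needed is checking the sign conditions in (b).

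Therefore $\mathcal R(\varphi_{\mathrm L},\psi_{\varrho})=\sin^2 1\approx0.708<1$, which also confirms $\varphi_{\mathrm L}\not\prec\psi_{\varrho}$. For sharpness, the minimum is attained at $z_0=-1$, where $\psi_{\varrho}(-1)=\cosh(i)=\cos 1$. With $r_0=\sin^21$ we get $\varphi_{\mathrm L}(-r_0)=\sqrt{1-\sin^21}=\cos1=\psi_{\varrho}(-1)$. So the point $\varphi_{\mathrm L}(-r_0)$ lies on $\partial\psi_{\varrho}(\mathbb D)$, and the radius is sharp.
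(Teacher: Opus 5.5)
Your proof is correct, and it uses the same reduction as the paper. Both arguments minimize $\sinh^2 a+\sin^2 b$ on the arc $a^2+b^2=1$, $a\ge0$; the paper writes this as $F(u)=\sinh^2 u+\sin^2\sqrt{1-u^2}$.

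The difference is in the minimization step. The paper differentiates to get $F'(u)=\sinh(2u)-u\sin\bigl(2\sqrt{1-u^2}\bigr)/\sqrt{1-u^2}$. It then uses $\sinh x>x$ and $\sin x<x$ to show $F'>0$ on $(0,1)$, so $F$ is strictly increasing and the minimum is at $u=0$.

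You prove the global bound directly instead. The identity $\sin^2 1-\sin^2 b=\sin(1-b)\sin(1+b)$, with $1\pm b\in[0,2]$, makes both factors nonnegative and bounded by their arguments. This gives $\sin^2 1-\sin^2 b\le 1-b^2=a^2\le\sinh^2 a$.

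The same two elementary inequalities drive both arguments, but you apply them to the function itself rather than to its derivative. Your route avoids calculus and makes the equality case immediate: $a=0$ is forced by $\sinh a>a$ for $a>0$, giving $z=-1$. The paper's route gives more information, namely that $|\sinh\sqrt z|$ increases monotonically along each half of the unit circle from $-1$ to $1$, though this is not needed for the radius.

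Your observation that $\psi_{\varrho}(z)=\sum_n z^n/(2n)!$ is entire is also a more direct check of the boundary-continuity hypothesis than the paper's appeal to Carath\'eodory's theorem.
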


\begin{proof}
Given $\psi_{\varrho}(z)=\cosh(\sqrt{z})$, it follows from \eqref{R1-formula} that the radius is
\[
\mathcal{R}(\varphi_{\mathrm{L}},\psi_{\varrho})
=
\min_{|z|=1}\left|\psi_{\varrho}(z)^2-1\right|
=
\min_{|z|=1}\left|\cosh^2(\sqrt{z})-1\right|
=
\min_{|z|=1}\left|\sinh^2(\sqrt{z})\right|.
\]
Set $z=e^{i\theta}$, where $-\pi\le\theta\le\pi$, and let
$w=\sqrt{z}=u+iv$. For the principal square root,
\[
w=e^{i\theta/2},
\qquad
u^2+v^2=1,
\qquad
0\le u\le1.
\]
We have
$
\left|\sinh(u+iv)\right|^2
=
\sinh^2(u)\cos^2(v)
+
\cosh^2(u)\sin^2(v).
$
Using the identity $\cosh^2(u)=1+\sinh^2(u)$, this expression simplifies to
\[
\left|\sinh(w)\right|^2
=
\sinh^2(u)(\cos^2(v)+\sin^2(v))
+\sin^2(v)
=
\sinh^2(u)+\sin^2(v).
\]
Since $v^2=1-u^2$, we obtain
$|\sinh(w)|^2=\sinh^2(u)+\sin^2\!\left(\sqrt{1-u^2}\right)$, and hence define
$F(u)=\sinh^2(u)+\sin^2\!\left(\sqrt{1-u^2}\right)$ for $u\in[0,1]$.
Differentiating with respect to $u$, we obtain
\[
F'(u)
=
\sinh(2u)
-
u\frac{\sin\left(2\sqrt{1-u^2}\right)}
{\sqrt{1-u^2}},
\qquad
u\in(0,1).
\]
For $u\in(0,1)$, we have $\sinh(2u)>2u$. Also, since
$2\sqrt{1-u^2}>0$ and $\sin x<x$ for $x>0$,
$
\sin\left(2\sqrt{1-u^2}\right)
<2\sqrt{1-u^2}.
$
Therefore,
\[
u\frac{\sin\left(2\sqrt{1-u^2}\right)}
{\sqrt{1-u^2}}
<
2u
<
\sinh(2u),
\]
which gives
\[
F'(u)
=
\sinh(2u)
-
u\frac{\sin\left(2\sqrt{1-u^2}\right)}
{\sqrt{1-u^2}}
>
0.
\]
Thus $F$ is strictly increasing on $[0,1]$, and hence its minimum occurs at $u=0$. Since $u=\cos(\theta/2)$, this corresponds to $\theta=\pm\pi$ and hence $z=-1$. Therefore,
\[
\mathcal{R}(\varphi_{\mathrm{L}},\psi_{\varrho})
=F(0)=\sin^2(1).
\]

Since the minimum is attained at $z_0=-1$, we have
$\psi_{\varrho}(z_0)=\cos(1)$.
Let $r_0=\sin^2(1)$. Then
$\varphi_{\mathrm L}(-r_0)=\sqrt{1-r_0}=\cos(1)=\psi_{\varrho}(-1)$.
Hence the radius $r_0$ is sharp.
\end{proof}

The cardioid starlike and convex classes were introduced by
Sharma \textit{et al.}~\cite{MR3536076} as special cases of the
Ma--Minda classes associated with the function
\[
\psi_C(z)=1+\frac{4}{3}z+\frac{2}{3}z^2.
\]
In particular, these classes are given by
$\mathcal{ST}_C=\mathcal{ST}(\psi_C)$ and
$\mathcal{CV}_C=\mathcal{CV}(\psi_C)$.
The following theorem determines the sharp inclusion radius of the
lemniscate-associated class in the cardioid class.
The following radius is from
\cite[Theorem~4.9]{MR3536076}.
We recover it through an analytical boundary minimization.

\begin{theorem}
The $\mathcal{P}_{\psi_C}$-radius of the class $\mathcal{P}_{\varphi_{\mathrm{L}}}$ is
\[
\mathcal{R}(\varphi_{\mathrm{L}},\psi_C)=\frac89\approx 0.8889.
\]
\end{theorem}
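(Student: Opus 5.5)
The plan is to apply formula \eqref{R1-formula} of Corollary~\ref{cor:three-radius-formulas}(i) with $\psi=\psi_C$. The polynomial $\psi_C$ is continuous on $\overline{\mathbb D}$, so Lemma~\ref{lem:inverse-hypotheses} applies. The remaining task is to minimize $|\psi_C(e^{i\theta})^2-1|$ and to check that the minimum is less than $1$; this last fact also gives $\varphi_{\mathrm L}\not\prec\psi_C$, as noted after Corollary~\ref{cor:three-radius-formulas}.

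First factor $\psi_C^2-1=(\psi_C-1)(\psi_C+1)$. We have $\psi_C(z)-1=\tfrac{2z}{3}(z+2)$ and $\psi_C(z)+1=\tfrac23(z^2+2z+3)$. Hence on $|z|=1$,
\[
|\psi_C(z)^2-1|=\tfrac49\,|z+2|\,|z^2+2z+3|.
\]
Next write $z=e^{i\theta}$ and $x=\cos\theta$, and expand each squared modulus using the Chebyshev identities. This gives $|z+2|^2=5+4x$. For the quadratic factor,
\[
|z^2+2z+3|^2=14+8(z+\bar z)+3(z^2+\bar z^2)=12x^2+16x+8.
\]
Therefore
\[
|\psi_C(e^{i\theta})^2-1|^2=\tfrac{64}{81}\,h(x),\qquad h(x)=(5+4x)(3x^2+4x+2)=12x^3+31x^2+28x+10 .
\]

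The key step is to show that $h$ is minimized on $[-1,1]$ at $x=-1$. I would do this by monotonicity. We have $h'(x)=2(18x^2+31x+14)$, whose discriminant is $961-1008<0$. So $h'>0$ everywhere and $h$ is strictly increasing. The minimum is therefore $h(-1)=1$, which gives
\[
\mathcal R(\varphi_{\mathrm L},\psi_C)=\sqrt{64/81}=8/9<1 .
\]
The only real obstacle is getting the expansion of $|z^2+2z+3|^2$ right; after that, the discriminant check settles the matter.

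For sharpness, the minimum occurs at $z_0=-1$, where $\psi_C(-1)=1-\tfrac43+\tfrac23=\tfrac13$. On the other hand, $\varphi_{\mathrm L}(-8/9)=\sqrt{1/9}=\tfrac13$. So $\varphi_{\mathrm L}(-r_0)$ with $r_0=8/9$ lies on $\partial\psi_C(\mathbb D)$, and the radius cannot be enlarged, in agreement with Theorem~\ref{thm:general-radius}.
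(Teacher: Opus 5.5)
Your proposal is correct and follows essentially the same route as the paper: the factorization $\psi_C(z)^2-1=\tfrac49\,z(z+2)(z^2+2z+3)$, the boundary polynomial $12x^3+31x^2+28x+10$ whose derivative has negative discriminant, and sharpness at $z_0=-1$. Your remark that the minimum $8/9<1$ itself certifies $\varphi_{\mathrm L}\not\prec\psi_C$ makes explicit a hypothesis that the paper verifies only through its general comment after the radius formulas.
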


\begin{proof}
For $\psi_C(z)=1+4z/3+2z^2/3$, using
\eqref{R1-formula}, we obtain
\[
\mathcal{R}(\varphi_{\mathrm L},\psi_C)
=\min_{|z|=1}|\psi_C(z)^2-1|.
\]
Factoring, we have
$
\psi_C(z)^2-1=\left( 4/9\right) z(z+2)(z^2+2z+3),
$
and hence
\begin{equation}\label{car-eq1}
|\psi_C(z)^2-1|^2
=\frac{16}{81}|z|^2|z+2|^2|z^2+2z+3|^2.
\end{equation}
Setting $z=e^{i\theta}$ and $x=\cos\theta\in[-1,1]$, the
Chebyshev polynomial identities give
$
|z+2|^2=4x+5,\;
|z^2+2z+3|^2=4(3x^2+4x+2).
$
Thus, \eqref{car-eq1} becomes
\begin{equation}\label{car-eq2}
|\psi_C(e^{i\theta})^2-1|^2
=\frac{64}{81}\left( 12x^3+31x^2+28x+10\right) 
\end{equation}
Let $F(x) = 12x^3 + 31x^2 + 28x + 10$, where $x \in [-1,1]$. Differentiating with respect to $x$, we get
$
F'(x) = 36x^2 + 62x + 28.
$
Since  $F'(x) $ is a quadratic with positive leading coefficient and negative discriminant $(\Delta=-188)$, so that $F'(x)>0$ for all $x\in \mathbb{R}$, thus $F(x)$ is strictly increasing on the interval $[-1, 1]$. Therefore, its minimum is $1$ attained precisely at the left endpoint, $x = -1$.
\noindent Consequently,
\[
\mathcal{R}(\varphi_{\mathrm{L}},\psi_C)
=
\min_{|z|=1}\left|\psi_C(z)^2-1\right|
=
\sqrt{\frac{64}{81}\times F(-1)}
=
\sqrt{\frac{64}{81}}=\frac{8}{9}.
\]
Since the minimum is attained at $z_0=-1$, we have
$\psi_C(z_0)=1/3.$
Let $r_0=8/9$. Then 
$\varphi_{\mathrm L}(-r_0)=\sqrt{1-r_0}=1/3=\psi_C(-1)$.
Hence the radius is sharp.
\end{proof}

 In 2021, Gupta \textit{et al.}~\cite{MR4303808} introduced the
 Ma--Minda starlike and convex classes associated with the cardioid
 function
 \[
 \psi_{\mathrm{car}}(z)
 =
 1+z+\frac{z^2}{2}.
 \]
 In particular,
 $\mathcal{ST}_{\mathrm{car}}=\mathcal{ST}(\psi_{\mathrm{car}})$ and
 $\mathcal{CV}_{\mathrm{car}}=\mathcal{CV}(\psi_{\mathrm{car}})$.
 The image of $\mathbb D$ under $\psi_{\mathrm{car}}$ is the domain
 bounded by the cardioid
 \[
 \left(4x^2+4y^2-8x-1\right)^2
 +
 4\left(4x^2+4y^2-12x+1\right)
 =
 0.
 \]
 The following theorem determines the sharp inclusion radius of the
 lemniscate-associated class in the cardioid class.
 
 The following radius is the case $c=1$ of
 \cite[Theorem~4.3(a)]{MR4303808}.
 We recover it through an analytical boundary minimization.

 \begin{theorem} 
 The $\mathcal{P}_{\psi_{car}}$-radius of the class
 $\mathcal{P}_{\varphi_{\mathrm L}}$ is 
 \[
 \mathcal{R}(\varphi_{\mathrm L},\psi_{car})
 =
 \frac34
 =
 0.75.
 \]
 \end{theorem}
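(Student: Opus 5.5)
The plan is to apply Corollary~\ref{cor:three-radius-formulas}(i). Since $\psi_{car}$ is a polynomial Ma--Minda function, it is continuous on $\overline{\mathbb D}$, so Lemma~\ref{lem:inverse-hypotheses} supplies the inverse and continuity hypotheses. The non-subordination $\varphi_{\mathrm L}\not\prec\psi_{car}$ will follow afterwards from the observation that the boundary minimum is less than $1$. So everything reduces to computing $\min_{|z|=1}|\psi_{car}(z)^2-1|$, exactly as in the cardioid case $\psi_C$.

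\textbf{Factor.} Write $\psi_{car}^2-1=(\psi_{car}-1)(\psi_{car}+1)$. This gives
\[
\psi_{car}(z)^2-1=\Bigl(z+\tfrac{z^2}{2}\Bigr)\Bigl(2+z+\tfrac{z^2}{2}\Bigr)=\tfrac14\,z(z+2)(z^2+2z+4).
\]

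\textbf{Reduce to a polynomial in $x$.} Put $z=e^{i\theta}$ and $x=\cos\theta$, and use the Chebyshev identities:
\[
|z+2|^2=4x+5,\qquad |z^2+2z+4|^2=21+10(z+z^{-1})+4(z^2+z^{-2})=16x^2+20x+13.
\]
Therefore
\[
|\psi_{car}(e^{i\theta})^2-1|^2=\tfrac1{16}G(x),\qquad G(x)=(4x+5)(16x^2+20x+13)=64x^3+160x^2+152x+65.
\]

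\textbf{Minimize $G$ on $[-1,1]$.} This is the only step needing care, because the quadratic factor $16x^2+20x+13$ is not monotone on $[-1,1]$ (its vertex is at $x=-5/8$). I would therefore work with the expanded cubic rather than the factors. Its derivative is $G'(x)=192x^2+320x+152$. This quadratic has discriminant $320^2-4\cdot192\cdot152=102400-116736<0$, so $G'>0$ on $\mathbb R$ and $G$ is strictly increasing on $[-1,1]$. Hence the minimum is at $x=-1$, where
\[
G(-1)=-64+160-152+65=9 .
\]
This yields
\[
\mathcal R(\varphi_{\mathrm L},\psi_{car})=\sqrt{9/16}=\tfrac34<1,
\]
and the inequality $\tfrac34<1$ confirms $\varphi_{\mathrm L}\not\prec\psi_{car}$.

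\textbf{Sharpness.} The minimum is attained at $z_0=-1$, where $\psi_{car}(-1)=\tfrac12$. With $r_0=\tfrac34$ we get $\varphi_{\mathrm L}(-r_0)=\sqrt{1-\tfrac34}=\tfrac12=\psi_{car}(-1)$. So $\varphi_{\mathrm L}(r\mathbb D)$ leaves $\psi_{car}(\mathbb D)$ for every $r>r_0$, and Theorem~\ref{thm:general-radius} gives sharpness.
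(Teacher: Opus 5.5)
Your proposal is correct and follows the paper's proof essentially step for step: the same factorization $\psi_{car}(z)^2-1=\tfrac14 z(z+2)(z^2+2z+4)$, the same cubic $64x^3+160x^2+152x+65$ shown to be increasing because its derivative has negative discriminant, the minimum value $9/16$ at $x=-1$, and sharpness via the contact point $z_0=-1$. Your explicit verification of the hypotheses of Lemma~\ref{lem:inverse-hypotheses}, and of $\varphi_{\mathrm L}\not\prec\psi_{car}$ from $3/4<1$, is a small addition; the paper leaves this to its general remark after Corollary~\ref{cor:three-radius-formulas}.
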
 
  
 \begin{proof} 
 
 For $\psi_{car}(z)=1+z+z^2/2$, it follows from
 \eqref{R1-formula} that
 \[
 \mathcal{R}(\varphi_{\mathrm L},\psi_{car})
 =
 \min_{|z|=1}|\psi_{car}(z)^2-1|
 \]
 The expression $\psi_{car}(z)^2-1$ can be factored as
 $ (z/4)(2+z)(4+2z+z^2),$
and hence
 \begin{equation}\label{Acar-eq1} 
 |\psi_{car}(z)^2-1|^2
 =
 \frac{1}{16}|z|^2|2+z|^2|4+2z+z^2|^2.
 \end{equation}
 Set $z=e^{i\theta}$ and $x=\cos\theta\in[-1,1]$. By
 Chebyshev polynomial identities, we obtain
 $|2+z|^2=4x+5$ and
 $|4+2z+z^2|^2=16x^2+20x+13$.
 Then \eqref{Acar-eq1}, simplifies to
 \begin{equation}\label{Acar-eq2} 
 |\psi_{car}(e^{i\theta})^2-1|^2
 =
 \frac{1}{16}
 \left(
 64x^3+160x^2+152x+65
 \right).
 \end{equation}
 Let $F(x)=64x^3+160x^2+152x+65$ for $x\in[-1,1]$. Differentiating,
 we obtain
 \[
 F'(x)
 =
 192x^2+320x+152
 =
 8(24x^2+40x+19).
 \]
 The quadratic $24x^2+40x+19$ has negative discriminant
 $\Delta=-224<0$ and positive leading coefficient. Hence
 $F'(x)>0$ for all $x\in\mathbb R$, so $F$ is strictly increasing on
 $[-1,1]$. Therefore, its minimum is attained at $x=-1$, where
 $F(-1)=9$.
 
 Consequently,
 \[
 \mathcal{R}(\varphi_{\mathrm L},\psi_{car})
 =
 \sqrt{\frac{1}{16}F(-1)}
 =
 \sqrt{\frac{9}{16}}
 =
 \frac34.
 \]
 Since the minimum is attained at $z_0=-1$, we have
 $\psi_{car}(z_0)=1/2$. Let $r_0=3/4$. Then
 $\varphi_{\mathrm L}(-r_0)=\sqrt{1-r_0}=1/2=\psi_{car}(-1)$.
 Hence the radius is sharp.
 \end{proof}
 
 In 2021, Wani and Swaminathan~\cite{MR4190740} introduced the
 Ma--Minda starlike and convex classes associated with the nephroid
 function
 \[
 \psi_{\mathrm{Ne}}(z)
 =
 1+z-\frac{z^3}{3}.
 \]
 In particular,
 $\mathcal{ST}_{\mathrm{Ne}}=\mathcal{ST}(\psi_{\mathrm{Ne}})$ and
 $\mathcal{CV}_{\mathrm{Ne}}=\mathcal{CV}(\psi_{\mathrm{Ne}})$.
 The image of $\mathbb D$ under $\psi_{\mathrm{Ne}}$ is the interior of
 the nephroid bounded by the two-cusped curve
 \[
 \left((u-1)^2+v^2-\frac{4}{9}\right)^3
 -\frac{4v^2}{3}
 =
 0.
 \]
 The following theorem determines the sharp inclusion radius of the
 lemniscate-associated class in the nephroid class.
 The following radius was obtained in
 \cite[Theorem~3.2(ii)]{MR4129627}
 for $\alpha=0$.
 We provide a proof using the general subordination radius formula.
 \begin{theorem}
 The $\mathcal{P}_{\psi_{\mathrm{Ne}}}$-radius of the class
 $\mathcal{P}_{\varphi_{\mathrm L}}$ is
 \[
 \mathcal{R}(\varphi_{\mathrm L},\psi_{\mathrm{Ne}})
 =
 \frac89
 \approx 0.8889.
 \]
 \end{theorem}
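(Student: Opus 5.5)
By Corollary~\ref{cor:three-radius-formulas}(i), since $\psi_{\mathrm{Ne}}$ is a polynomial (hence continuous on $\overline{\mathbb D}$) and a Ma--Minda function, it suffices to compute
\[
\mathcal R(\varphi_{\mathrm L},\psi_{\mathrm{Ne}})=\min_{|z|=1}\bigl|\psi_{\mathrm{Ne}}(z)^2-1\bigr|
\]
and check that the minimum is $8/9<1$. This also verifies $\varphi_{\mathrm L}\not\prec\psi_{\mathrm{Ne}}$, by the remark following that corollary. I will follow the same pattern as the cardioid theorems: factor, pass to $x=\cos\theta$ via the Chebyshev identities, and reduce everything to a one-variable polynomial inequality on $[-1,1]$.

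First, I factor
\[
\psi_{\mathrm{Ne}}(z)^2-1=(\psi_{\mathrm{Ne}}-1)(\psi_{\mathrm{Ne}}+1)=\frac{z}{9}\,(3-z^2)\,(6+3z-z^3).
\]
With $z=e^{i\theta}$ and $x=\cos\theta$, the identities $z^n+z^{-n}=2T_n(x)$ give
\[
|3-z^2|^2=10-6\cos2\theta=4(4-3x^2),
\]
\[
|6+3z-z^3|^2=46+36x-6\cos2\theta-12\cos3\theta=4(13+18x-3x^2-12x^3).
\]
Hence
\[
|\psi_{\mathrm{Ne}}(e^{i\theta})^2-1|^2=\frac{16}{81}\,P(x),\qquad P(x)=(4-3x^2)(13+18x-3x^2-12x^3).
\]
At $x=-1$ we get $P(-1)=1\cdot 4=4$, so the value there is $\sqrt{64/81}=8/9$. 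The claim therefore reduces to showing $P(x)\ge 4$ on $[-1,1]$, with equality only at $x=-1$.

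Next, I expand $P(x)-4$, a quintic with leading term $36x^5$, and divide out the factor $(x+1)$. This leaves a quartic $Q(x)$, and the task is to show $Q>0$ on $(-1,1]$. This positivity is the main obstacle, since the quartic need not be monotone and one must rule out a second interior minimum of $P$ dipping below $4$, say near $x\approx -1/2$. I would first try to check whether $(x+1)$ divides $Q$ once more, and otherwise write $Q$ as a sum of manifestly nonnegative terms on $[-1,1]$, for example by substituting $x=-1+t$ with $t\in[0,2]$ and hoping all coefficients are positive, or by grouping terms with bounds like $1-x^2\ge0$. If that fails, I would fall back on Sturm's theorem (or Descartes' rule in the variable $t$) to show $Q$ has no roots in $(-1,1]$, and then check its sign at a single point.

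Finally, for sharpness: the minimum occurs at $z_0=-1$, where $\psi_{\mathrm{Ne}}(-1)=1/3$. With $r_0=8/9$,
\[
\varphi_{\mathrm L}(-r_0)=\sqrt{1-8/9}=1/3=\psi_{\mathrm{Ne}}(-1),
\]
so $\varphi_{\mathrm L}(r_0\overline{\mathbb D})$ touches the boundary of $\psi_{\mathrm{Ne}}(\mathbb D)$, and Theorem~\ref{thm:general-radius} gives that the radius is best possible.
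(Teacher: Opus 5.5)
Your reduction is correct and matches the paper's: the factorization $\psi_{\mathrm{Ne}}(z)^2-1=\frac{z}{9}(3-z^2)(6+3z-z^3)$, the Chebyshev formulas, $P(-1)=4$, and the sharpness check at $z_0=-1$ all agree. The gap is that the only nontrivial step, the positivity of the quartic, is never proved. You list strategies, and the first two fail. Explicitly, $P(x)-4=3(x+1)R(x)$ with $R(x)=12x^4-9x^3-25x^2+8x+16$, so your $Q=3R$. Since $R(-1)=4\neq0$, there is no second factor $x+1$. In $t=x+1$ one finds $R=12t^4-57t^3+74t^2-17t+4$, whose coefficients alternate in sign. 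So the ``all coefficients positive'' hope fails, and Descartes' rule (four sign changes) is inconclusive. Sturm's theorem would work, but until the sequence is computed it is a promise rather than an argument.

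Your third idea, grouping with $1-x^2\ge0$, is exactly what the paper does, and it closes the gap:
\[
R(x)=(1-x^2)(13-12x^2)+B(x),\qquad B(x)=-9x^3+8x+3 .
\]
The first term is nonnegative on $[-1,1]$. Since $B'(x)=8-27x^2$, the candidates for the minimum of $B$ on $[-1,1]$ are:
\[
B(-1)=4,\qquad B(1)=2,\qquad B\bigl(\sqrt{8/27}\bigr)=3+\tfrac{32\sqrt6}{27},\qquad B\bigl(-\sqrt{8/27}\bigr)=3-\tfrac{32\sqrt6}{27}.
\]
The last is positive because $81^2=6561>6144=32^2\cdot 6$. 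Hence $R>0$ on $[-1,1]$, so $P(x)\ge4$ with equality only at $x=-1$, and the rest of your argument goes through. There is also no delicate interior minimum: $P(-1/2)=247/16$, and the smallest value of $R$ on $[-1,1]$ is $R(1)=2$, so the margin is comfortable and crude grouping suffices.
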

 
 \begin{proof}
 For $\psi_{\mathrm{Ne}}(z)=1+z-z^3/3$, it follows from
 \eqref{R1-formula} that
 \[
 \mathcal{R}(\varphi_{\mathrm L},\psi_{\mathrm{Ne}})
 =
 \min_{|z|=1}|\psi_{\mathrm{Ne}}(z)^2-1|
 \]
Factoring $\psi_{\mathrm{Ne}}(z)^2-1$ and taking the modulus squared, we obtain
\begin{equation}\label{nep-eq1}
|\psi_{\mathrm{Ne}}(z)^2-1|^2
=\frac1{81}|z|^2|3-z^2|^2|6+3z-z^3|^2.
\end{equation}
 Set $z=e^{i\theta}$ and $x=\cos\theta\in[-1,1]$. Using
 Chebyshev polynomial identities we obtain
 $|3-z^2|^2=4(4-3x^2)$ and
 $|6+3z-z^3|^2=4(13+18x-3x^2-12x^3)$.
Then \eqref{nep-eq1} becomes
 \begin{equation}\label{nep-eq2}
 |\psi_{\mathrm{Ne}}(e^{i\theta})^2-1|^2
 =
 \frac{16}{81}
 (4-3x^2)(13+18x-3x^2-12x^3).
 \end{equation}
 Let
 $F(x)=36x^5+9x^4-102x^3-51x^2+72x+52$.
 To determine its minimum on $[-1,1]$, we show that
 $F(x)\ge4$. Expanding and subtracting $4$, we obtain
 \begin{align*}
 F(x)-4
 &= 36x^5+9x^4-102x^3-51x^2+72x+48\\
& = 3(x+1)(12x^4-9x^3-25x^2+8x+16).
 \end{align*}
 Since $3(x+1)\ge0$ on $[-1,1]$, it remains to prove that
 $R(x)=12x^4-9x^3-25x^2+8x+16>0$.
 Write $R(x)=A(x)+B(x)$, where
 $A(x)=12x^4-25x^2+13$ and
 $B(x)=-9x^3+8x+3$. Since
 $
 A(x)
 = (x^2-1)(12x^2-13),
 $
 and both factors are nonpositive on $[-1,1]$, we have
 $A(x)\ge0$.
 For $B(x)=-9x^3+8x+3$, we have
 $B'(x)=8-27x^2$, so the critical points are
 $x=\pm\sqrt{8/27}$. Comparing the values at the critical points and
 the endpoints gives
 \[
 \min_{x\in[-1,1]}B(x)
 =
 B\left(-\sqrt{\frac{8}{27}}\right)
 =
 3-\frac{32\sqrt6}{27}
 >
 0,
 \]
 where the last inequality follows from $81^2>32^2\cdot6$.
 Thus $R(x)=A(x)+B(x)>0$ on $[-1,1]$.
 
\noindent Consequently,
 $F(x)-4=3(x+1)R(x)\ge0$ on $[-1,1]$, with equality only at
 $x=-1$. Hence the minimum of $F$ is $F(-1)=4$. Therefore,
 \[
 \mathcal{R}(\varphi_{\mathrm L},\psi_{\mathrm{Ne}})
 =
 \sqrt{\frac{16}{81}F(-1)}
 =
 \sqrt{\frac{64}{81}}
 =
 \frac89.
 \]
 
 Since the minimum is attained at $z_0=-1$, we have
 $\psi_{\mathrm{Ne}}(z_0)=1/3$. Let $r_0=8/9$. Then
 $\varphi_{\mathrm L}(-r_0)=\sqrt{1-r_0}=1/3
 =\psi_{\mathrm{Ne}}(-1)$.
 Hence the radius is sharp.
 \end{proof}
 
 Gandhi~\cite{MR4099174} introduced the Ma--Minda starlike class
 associated with the three-leaf function
 \[
 \psi_{3L}(z)
 =
 1+\frac{4}{5}z+\frac{1}{5}z^4.
 \]
 The corresponding starlike and convex classes are denoted by
 $\mathcal{ST}_{3L}=\mathcal{ST}(\psi_{3L})$ and
 $\mathcal{CV}_{3L}=\mathcal{CV}(\psi_{3L})$, respectively.
 The following theorem determines the sharp inclusion radius of the
 lemniscate-associated class in the three-leaf class.
 
 The following radius is the case $n=4$ and $c=1$
 of \cite[Theorem~5.2(b)]{MR4510346}.
 We recover it through the general subordination radius
 formula and an explicit polynomial positivity argument.
 
 \begin{theorem}
 The $\mathcal{P}_{\psi_{3L}}$-radius of the class
 $\mathcal{P}_{\varphi_{\mathrm L}}$ is
 \[
 \mathcal{R}(\varphi_{\mathrm L},\psi_{3L})
 =
 \frac{21}{25}
 =
 0.84.
 \]
 \end{theorem}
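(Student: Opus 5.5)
The plan is to follow the pattern of the preceding lemniscate theorems, using formula \eqref{R1-formula} of Corollary~\ref{cor:three-radius-formulas}(i). Since $\psi_{3L}$ is a polynomial Ma--Minda function, it is continuous on $\overline{\mathbb D}$, so Lemma~\ref{lem:inverse-hypotheses} applies. Non-subordination will follow once the boundary minimum is shown to be $21/25<1$. First factor
\[
\psi_{3L}(z)^2-1=(\psi_{3L}(z)-1)(\psi_{3L}(z)+1)=\frac{z}{25}\,(4+z^3)\,(10+4z+z^4),
\]
so that on $|z|=1$ we have $|\psi_{3L}(z)^2-1|^2=\frac{1}{625}|4+z^3|^2|10+4z+z^4|^2$.

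Next, set $z=e^{i\theta}$ and $x=\cos\theta$, and use the Chebyshev identities listed before Section~3. They give
\[
|4+z^3|^2=17+8T_3(x)=32x^3-24x+17
\]
and
\[
|10+4z+z^4|^2=117+80x+8T_3(x)+20T_4(x)=160x^4+32x^3-160x^2+56x+137.
\]
Define $G(x)$ as the product of these two polynomials; it has degree $7$. At $x=-1$ the factors equal $9$ and $49$, so $G(-1)=441$. This yields the candidate value $\sqrt{441/625}=21/25$.

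The core step is to prove $G(x)\ge 441$ on $[-1,1]$, with equality only at $x=-1$. Since $x=-1$ is a root of $G-441$, I would write $G(x)-441=(x+1)Q(x)$ with $Q$ of degree $6$, and show $Q>0$ on $[-1,1]$. I expect this positivity to be the main obstacle.

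For $Q$, I would first try the device used in the nephroid proof. Split $Q$ into a part containing only even powers that factors with a sign-definite factor such as $(x^2-1)$, plus a low-degree remainder whose minimum on $[-1,1]$ is found from its critical points. If that splitting does not close, I would fall back on two alternatives. One is to bound $Q$ below piecewise on subintervals such as $[-1,-1/2]$, $[-1/2,0]$ and $[0,1]$ using crude monomial bounds. The other is to apply Sturm's theorem to $Q$ on $[-1,1]$ to show it has no roots there, together with $Q(0)>0$. A useful simplification is that the first factor $32x^3-24x+17$ has a minimum on $[-1,1]$ of $9$, attained at $x=-1$ and at $x=1/2$. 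Hence it suffices to show that $160x^4+32x^3-160x^2+56x+137\ge 49$ wherever the first factor is small, and to handle the remaining region by the product bound.

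Finally, sharpness is checked at $z_0=-1$, where the minimum is attained. There $\psi_{3L}(-1)=1-\tfrac45+\tfrac15=\tfrac25$. With $r_0=21/25$ we get $\varphi_{\mathrm L}(-r_0)=\sqrt{4/25}=\tfrac25=\psi_{3L}(-1)$, so $\varphi_{\mathrm L}(r\mathbb D)\not\subset\psi_{3L}(\mathbb D)$ for $r>r_0$, and the radius is best possible by Theorem~\ref{thm:general-radius}.
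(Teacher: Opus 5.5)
Your reduction matches the paper's: the factorization $\psi_{3L}^2-1=\frac{z}{25}(4+z^3)(10+4z+z^4)$, both Chebyshev expansions, $G(-1)=9\cdot49=441$, and the sharpness check at $z_0=-1$ are all correct. The gap is the one step that carries the content of the theorem. You never prove $Q>0$ on $[-1,1]$; you only list strategies. Moreover, the one concrete shortcut you propose is false. Write $f(x)=32x^3-24x+17$ and $g(x)=160x^4+32x^3-160x^2+56x+137$. Then
\[
g(x)-49=8(x+1)\bigl(20x^3-16x^2-4x+11\bigr),
\]
and the cubic equals $-21$ at $x=-1$ (equivalently, $g'(-1)=-168$). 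Hence $g<49$ on an interval $(-1,x_*)$ with $x_*\approx-0.68$. This interval starts at $x=-1$, where $f$ attains its minimum $9$. So ``$g\ge49$ wherever the first factor is small'' cannot be proved. Near $x=-1$ the inequality $fg\ge441$ holds only through the product: $f'(-1)g(-1)=72\cdot49$ exceeds $f(-1)\,|g'(-1)|=9\cdot168$.

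Your observation about $f$ does work if you use it as an identity rather than as a bound. Since $f(x)-9=8(x+1)(2x-1)^2$,
\[
G-441=(f-9)g+9(g-49)=8(x+1)\bigl[(2x-1)^2g(x)+9(20x^3-16x^2-4x+11)\bigr].
\]
Thus $Q=16A$ with $2A(x)=(2x-1)^2g(x)+9(20x^3-16x^2-4x+11)$. The paper finishes from this point. It regroups the expression as $2A(x)=(2x-1)^2B(x)+45(2-x)$, where $B=g+45x+9=160x^4+32x^3-160x^2+101x+146$. It then certifies $B>0$ through $B(x)=10\bigl(4x^2+\tfrac25x-3\bigr)^2+\tfrac15(392x^2+625x+280)$, whose quadratic has discriminant $-48415$. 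Since $2-x>0$ on $[-1,1]$, this gives $A>0$, hence $G\ge441$ with equality only at $x=-1$. A Sturm-sequence count or careful piecewise bounds would also work, since $A$ is comfortably positive on $[-1,1]$ (its minimum is about $33$, near $x\approx0.53$). Until one such argument is actually carried out, however, the proof is incomplete.
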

 
 \begin{proof}
 
 For $\psi_{3L}(z)=1+4z/5+z^4/5$, it follows from
 \eqref{R1-formula} that
 \[
 \mathcal{R}(\varphi_{\mathrm L},\psi_{3L})
 =
 \min_{|z|=1}|\psi_{3L}(z)^2-1|.
 \]
 Factoring $\psi_{3L}(z)^2-1$ and taking the squared modulus, we obtain
 \begin{equation}\label{3L-eq1}
 |\psi_{3L}(z)^2-1|^2
 =
 \frac{1}{625}|z|^2|4+z^3|^2|10+4z+z^4|^2.
 \end{equation}
 Set $z=e^{i\theta}$ and $x=\cos\theta\in[-1,1]$. Using
 Chebyshev polynomial identities we obtain
 $|4+z^3|^2=32x^3-24x+17$ and
 $|10+4z+z^4|^2
 =160x^4+32x^3-160x^2+56x+137$.
 Substituting these expressions into \eqref{3L-eq1}, we obtain
 \begin{equation}\label{3L-eq2}
 |\psi_{3L}(e^{i\theta})^2-1|^2
 =
 \frac{1}{625}
 (32x^3-24x+17)
 (160x^4+32x^3-160x^2+56x+137).
 \end{equation}
 Let
 $F(x)=(32x^3-24x+17)
 (160x^4+32x^3-160x^2+56x+137)$.
 To show that $F(x)\ge441$ on $[-1,1]$, we consider
 \[
 F(x)-441
 =
 5120x^7+1024x^6-8960x^5+3744x^4
 +8768x^3-4064x^2-2336x+1888.
 \]
 This can be factored as
 $
 F(x)-441=16(x+1)A(x),
 $
 where
 $A(x)=320x^6-256x^5-304x^4+538x^3+10x^2-264x+118$.
 Since $16(x+1)\ge0$ on $[-1,1]$, it remains to show that
 $A(x)>0$.
 For this purpose, we write
 $
 2A(x)= (2x-1)^2B(x)+45(2-x),
 $
 where
 $B(x)=160x^4+32x^3-160x^2+101x+146$.
 Moreover,
 \[
 B(x)
 =
 10\left(4x^2+\frac25x-3\right)^2
 +
 \frac15(392x^2+625x+280).
 \]
 The first term is nonnegative. The quadratic
 $392x^2+625x+280$ has discriminant
 $\Delta=-48415<0$ and positive leading coefficient, so it is
 strictly positive for all $x\in\mathbb R$. Hence $B(x)>0$ on
 $[-1,1]$. Since $2-x>0$ on this interval, it follows that
 $A(x)>0$.
 Therefore,
 $
 F(x)-441=16(x+1)A(x)\ge0,
 $
 with equality only at $x=-1$. Hence the minimum of $F$ on $[-1,1]$
 is $F(-1)=441$. Thus,
 \[
 \mathcal{R}(\varphi_{\mathrm L},\psi_{3L})
 =
 \sqrt{\frac{1}{625}F(-1)}
 =
 \sqrt{\frac{441}{625}}
 =
 \frac{21}{25}.
 \]
 
 Since the minimum is attained at $z_0=-1$, we have
 $\psi_{3L}(z_0)=2/5$. Let $r_0=21/25$. Then
 $\varphi_{\mathrm L}(-r_0)=\sqrt{1-r_0}=2/5=\psi_{3L}(-1)$.
 Hence the radius is sharp.
 \end{proof}
 
 Masih \textit{et al.}~\cite{Masih} studied the Ma--Minda starlike and
 convex classes associated with the family of functions
 \[
 \psi_s(z)=(1+sz)^2,
 \qquad
 0<s\le\frac{1}{\sqrt2}.
 \]
 In the present work, we consider the extremal case $s=1/\sqrt2$, which
 gives the lima\c{c}on function
 \[
 \psi_{\mathrm{Lim}}(z)
 =
 \left(1+\frac{z}{\sqrt2}\right)^2.
 \]
 In particular,
 $\mathcal{ST}_{\mathrm{Lim}}=\mathcal{ST}(\psi_{\mathrm{Lim}})$ and
 $\mathcal{CV}_{\mathrm{Lim}}=\mathcal{CV}(\psi_{\mathrm{Lim}})$.
 Related inclusion properties for this class were studied by Kanaga and
 Ravichandran~\cite{MR4478088}; see also~\cite{MR4386722}.
 The following theorem determines the sharp inclusion radius of the
 lemniscate-associated class in the lima\c{c}on class.
 
 The following radius is the case $\alpha=0$ of
 \cite[Theorem~2.6(ii)]{MR4386722}.
 We recover it by evaluating the corresponding boundary minimum.
 \begin{theorem}
 The $\mathcal{P}_{\psi_{\mathrm{Lim}}}$-radius of the class
 $\mathcal{P}_{\varphi_{\mathrm L}}$ is
 \[
 \mathcal{R}(\varphi_{\mathrm L},\psi_{\mathrm{Lim}})
 =
 3\sqrt2-\frac{13}{4}
 \approx 0.99264.
 \]
 \end{theorem}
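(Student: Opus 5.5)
We apply Corollary~\ref{cor:three-radius-formulas}(i). The lima\c{c}on function $\psi_{\mathrm{Lim}}(z)=(1+z/\sqrt2)^2$ is a Ma--Minda function and is a polynomial, so it is continuous on $\overline{\mathbb D}$. Hence
\[
\mathcal R(\varphi_{\mathrm L},\psi_{\mathrm{Lim}})=\min_{|z|=1}\bigl|\psi_{\mathrm{Lim}}(z)^2-1\bigr|,
\]
provided the minimum turns out to be less than $1$. As noted after Corollary~\ref{cor:three-radius-formulas}, that same inequality also gives $\varphi_{\mathrm L}\not\prec\psi_{\mathrm{Lim}}$.

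To factor, put $w=1+z/\sqrt2$. Then $\psi_{\mathrm{Lim}}^2-1=w^4-1=(w-1)(w+1)(w^2+1)$, with
\[
w-1=\frac{z}{\sqrt2},\qquad w+1=2+\frac{z}{\sqrt2},\qquad w^2+1=\tfrac12\,(z^2+2\sqrt2\,z+4).
\]
Take $z=e^{i\theta}$ and $x=\cos\theta$. The first factor has modulus $1/\sqrt2$, and $|w+1|^2=\tfrac92+2\sqrt2\,x$. For the last factor, multiply by $\bar z$ to get $|z^2+2\sqrt2 z+4|=|z+2\sqrt2+4\bar z|$. Its squared modulus is $(5x+2\sqrt2)^2+9\sin^2\theta=16x^2+20\sqrt2\,x+17$. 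Therefore
\[
\bigl|\psi_{\mathrm{Lim}}(e^{i\theta})^2-1\bigr|^2=\frac18\,G(x),\qquad G(x)=\Bigl(\frac92+2\sqrt2\,x\Bigr)\bigl(16x^2+20\sqrt2\,x+17\bigr),\quad x\in[-1,1].
\]

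The main step is minimizing $G$ on $[-1,1]$. The obstacle is that the quadratic factor is not monotone on the interval: its vertex is at $x=-5\sqrt2/8$, which lies inside $[-1,1]$. So monotonicity of the individual factors is not enough, and I would differentiate the product directly. This gives
\[
G'(x)=96\sqrt2\,x^2+304x+124\sqrt2 .
\]
Its leading coefficient is positive and its discriminant is $304^2-4\cdot96\cdot124\cdot2=92416-95232<0$. Hence $G'>0$ on $\mathbb R$, so $G$ is strictly increasing and its minimum on $[-1,1]$ is at $x=-1$, that is, at $z=-1$.

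Evaluating there, $\psi_{\mathrm{Lim}}(-1)=(1-1/\sqrt2)^2=\tfrac32-\sqrt2$. So the minimum of $|\psi_{\mathrm{Lim}}^2-1|$ on the circle is
\[
1-\Bigl(\tfrac32-\sqrt2\Bigr)^2=1-\Bigl(\tfrac{17}{4}-3\sqrt2\Bigr)=3\sqrt2-\tfrac{13}{4}.
\]
This value is less than $1$, which confirms the non-subordination hypothesis.

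For sharpness, let $r_0=3\sqrt2-13/4$. Then $\varphi_{\mathrm L}(-r_0)=\sqrt{1-r_0}=\tfrac32-\sqrt2=\psi_{\mathrm{Lim}}(-1)$, which lies on $\partial\psi_{\mathrm{Lim}}(\mathbb D)$. This matches the sharpness argument in Theorem~\ref{thm:general-radius}.
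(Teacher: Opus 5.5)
Your proof is correct and follows essentially the same route as the paper. Both factor $\psi_{\mathrm{Lim}}^2-1$ the same way and reduce to a cubic in $x=\cos\theta$ (your $G$ is $8$ times the paper's $F$); both show its derivative is a quadratic with positive leading coefficient and negative discriminant, so the minimum is at $z=-1$, and both prove sharpness via $\varphi_{\mathrm L}(-r_0)=\psi_{\mathrm{Lim}}(-1)$. The only minor difference is that you evaluate $1-(\tfrac32-\sqrt2)^2$ directly at $z=-1$, rather than computing $\sqrt{F(-1)}=(12\sqrt2-13)/4$ as the paper does.
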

 
 \begin{proof}
 
 For $\psi_{\mathrm{Lim}}(z)=1+\sqrt2 z+z^2/2$, it follows from
 \eqref{R1-formula} that
 \[
 \mathcal{R}(\varphi_{\mathrm L},\psi_{\mathrm{Lim}})
 =
 \min_{|z|=1}|\psi_{\mathrm{Lim}}(z)^2-1|
 =
 \min_{-\pi\le\theta\le\pi}
 |\psi_{\mathrm{Lim}}(e^{i\theta})^2-1|.
 \]
 Using the factorization of $\psi_{\mathrm{Lim}}(z)^2-1$, we obtain
 \[
 \psi_{\mathrm{Lim}}(z)^2-1
 =
 \frac{1}{4}z(2\sqrt2+z)(4+2\sqrt2 z+z^2).
 \]
 Hence
 \begin{equation}\label{lim-eq1}
 |\psi_{\mathrm{Lim}}(z)^2-1|^2
 =
 \frac{1}{16}|z|^2
 |2\sqrt2+z|^2
 |4+2\sqrt2 z+z^2|^2.
 \end{equation}
 Setting $z=e^{i\theta}$ and $x=\cos\theta\in[-1,1]$, a direct
 calculation gives
 $|2\sqrt2+z|^2=4\sqrt2 x+9$ and
 $|4+2\sqrt2 z+z^2|^2=16x^2+20\sqrt2 x+17$.
 Substituting these expressions into \eqref{lim-eq1}, we obtain
 \begin{equation}\label{lim-eq2}
 |\psi_{\mathrm{Lim}}(e^{i\theta})^2-1|^2
 =
 4\sqrt2 x^3+19x^2
 +\frac{31\sqrt2}{2}x+\frac{153}{16}.
 \end{equation}
 Let
 \[
 F(x)
 =
 4\sqrt2 x^3+19x^2
 +\frac{31\sqrt2}{2}x+\frac{153}{16}.
 \]
 Then
 \[
 F'(x)
 =
 12\sqrt2 x^2+38x+\frac{31\sqrt2}{2}.
 \]
 The discriminant of $F'(x)$ is $\Delta=-44<0$, and its leading
 coefficient is positive. Hence $F'(x)>0$ for all $x\in\mathbb R$.
 Therefore, $F$ is strictly increasing on $[-1,1]$ and attains its
 minimum at $x=-1$, where
 \[
 F(-1)
 =
 \frac{457-312\sqrt2}{16}
 =
 \frac{(12\sqrt2-13)^2}{16}.
 \]
 Consequently,
 \[
 \mathcal{R}(\varphi_{\mathrm L},\psi_{\mathrm{Lim}})
 =
 \sqrt{F(-1)}
 =
 3\sqrt2-\frac{13}{4}.
 \]
 
 Since the minimum is attained at $z_0=-1$, we have
 $\psi_{\mathrm{Lim}}(z_0)=3/2-\sqrt2$.
 Let $r_0=3\sqrt2-13/4$. Then
 $\varphi_{\mathrm L}(-r_0)=\sqrt{1-r_0}
 =3/2-\sqrt2=\psi_{\mathrm{Lim}}(-1)$.
 Hence the radius is sharp.
 \end{proof}

The Ma--Minda starlike and convex classes associated with the rational function were introduced by Kumar and Ravichandran~\cite{MR3496681}. These classes correspond to the function
\[
\psi_R(z)
=
1+\frac{z^2+kz}{k^2-kz},
\qquad
k=\sqrt2+1.
\]
In particular,
$\mathcal{ST}_{R}=\mathcal{ST}(\psi_R)$ and
$\mathcal{CV}_{R}=\mathcal{CV}(\psi_R)$.
The following theorem determines the sharp inclusion radius of the
lemniscate-associated class in the rational-function class.
The following radius is from
\cite{MR3496681}.
We recover it through an analytical boundary minimization.

\begin{theorem}
The $\mathcal{P}_{\psi_R}$-radius of the class
$\mathcal{P}_{\varphi_{\mathrm L}}$ is
\[
\mathcal{R}(\varphi_{\mathrm L},\psi_R)
=
8\sqrt2-11
\approx 0.313708.
\]
\end{theorem}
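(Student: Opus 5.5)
The plan is to apply \eqref{R1-formula} of Corollary~\ref{cor:three-radius-formulas}, which is legitimate here: $\psi_R$ is a Ma--Minda function continuous on $\overline{\mathbb D}$ (its pole $z=k$ lies outside $\overline{\mathbb D}$). First simplify
\[
\psi_R(z)=1+\frac{z^2+kz}{k^2-kz}=\frac{k^2+z^2}{k(k-z)}.
\]
A difference of squares then gives the factorization
\[
\psi_R(z)^2-1=\frac{(z^2+kz)(z^2-kz+2k^2)}{k^2(k-z)^2}
=\frac{z(z+k)(z^2-kz+2k^2)}{k^2(k-z)^2}.
\]
Taking squared moduli on $z=e^{i\theta}$ and using the Chebyshev identities with $x=\cos\theta$, I get the following expressions:
\[
|z+k|^2=k^2+2kx+1,\qquad |k-z|^2=k^2-2kx+1,
\]
\[
|z^2-kz+2k^2|^2=1+k^2+4k^4-2k(1+2k^2)x+2k^2(4x^2-2).
\]
Hence $|\psi_R(e^{i\theta})^2-1|^2=N(x)/\bigl(k^4D(x)^2\bigr)$, where $N$ is a cubic in $x$ and $D(x)=k^2+1-2kx>0$ on $[-1,1]$.

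Next I evaluate at $x=-1$, i.e.\ $z=-1$. There
\[
\psi_R(-1)=\frac{k^2+1}{k(k+1)}=\frac{4+2\sqrt2}{4+3\sqrt2}=2\sqrt2-2,
\]
so the expected minimum value is $1-\psi_R(-1)^2=1-(12-8\sqrt2)=8\sqrt2-11$. The claim to prove is therefore
\[
P(x):=N(x)-(8\sqrt2-11)^2k^4D(x)^2\ge0 \quad\text{on } [-1,1],
\]
with equality only at $x=-1$. Since $P(-1)=0$, I factor $P(x)=(x+1)Q(x)$ with $Q$ a quadratic, after first reducing all coefficients to the form $a+b\sqrt2$ using $k^2=2k+1$.

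The main obstacle is proving $Q>0$ on $[-1,1]$, because the coefficients are irrational. I would first try a discriminant or vertex argument: either show the vertex of $Q$ lies outside $[-1,1]$ and check the endpoint values, or show the discriminant is negative, reducing each comparison to an inequality between rationals and multiples of $\sqrt2$ by squaring. If that is messy, the fallback is to bound $Q$ below on $[-1,1]$ by completing the square with rational approximations of $\sqrt2$. This establishes that the minimum is attained at $x=-1$, and since the value is less than $1$, the non-subordination hypothesis also holds.

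Sharpness then follows as in the earlier theorems. With $r_0=8\sqrt2-11$, we have
\[
\varphi_{\mathrm L}(-r_0)=\sqrt{1-r_0}=\sqrt{12-8\sqrt2}=2\sqrt2-2=\psi_R(-1),
\]
so $\varphi_{\mathrm L}(r\mathbb D)\not\subset\psi_R(\mathbb D)$ for any $r>r_0$, and the radius is best possible by Theorem~\ref{thm:general-radius}.
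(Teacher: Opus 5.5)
Your proposal is correct. The set-up (factoring $\psi_R^2-1$, the Chebyshev reduction, and the sharpness check at $z=-1$) is the same as the paper's; the difference is in how the minimum over $x\in[-1,1]$ is located.

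The paper first uses $k^2+1=2\sqrt2\,k$ to cancel common factors, which gives
\[
F(x)=\frac{(x+\sqrt2)\bigl(4x^2-(1+3\sqrt2)x+6+3\sqrt2\bigr)}{k^3(x-\sqrt2)^2}.
\]
It then proves $F'>0$ on $[-1,1]$ by bounding the cubic $x^3-3\sqrt2x^2-x-(4+3\sqrt2)$ in the numerator of $F'$ above by $-2-3\sqrt2$. Monotonicity locates the minimum at $x=-1$ without knowing its value in advance.

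You instead subtract the guessed minimum and factor out $(x+1)$. This is purely algebraic and gives exactly the needed inequality; it is the same device the paper uses for $(\varphi_{\mathrm L},\psi_{\mathrm{Ne}})$, $(\varphi_{\mathrm L},\psi_{3L})$ and $(\varphi_{\leftmoon},\psi_R)$.

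The step you left open goes through with your first option. After the same cancellation one finds
\[
P(x)=16k^3(x+1)\bigl(x^2+3(1-\sqrt2)x+10-5\sqrt2\bigr).
\]
The quadratic factor has discriminant $9(3-2\sqrt2)-4(10-5\sqrt2)=2\sqrt2-13<0$, so $Q>0$ on all of $\mathbb R$ and no rational approximation of $\sqrt2$ is needed.

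Your route proves $F(x)\ge F(-1)$ with equality only at $x=-1$, which is all the theorem requires. The paper's route yields the stronger but unnecessary strict monotonicity, at the cost of differentiating a rational function.
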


\begin{proof}

Let
$\psi_R(z)=1+(z^2+kz)/(k^2-kz)$, where $k=\sqrt2+1$.
It follows from \eqref{R1-formula} that
\[
\mathcal{R}(\varphi_{\mathrm L},\psi_R)
=
\min_{|z|=1}|\psi_R(z)^2-1|.
\]
Factoring the expression inside the modulus, we obtain
\begin{equation}\label{Rational-eq1}
\psi_R(z)^2-1
=
\frac{z(k+z)(z^2-kz+2k^2)}
{(k^2-kz)^2}.
\end{equation}
Hence, for $|z|=1$,
\[
|\psi_R(z)^2-1|^2
=
\frac{|k+z|^2|z^2-kz+2k^2|^2}
{|k^2-kz|^4}.
\]
Setting $z=e^{i\theta}$ and $x=\cos\theta\in[-1,1]$, a direct
calculation gives
$|k+z|^2=k^2+2kx+1$, $
|z^2-kz+2k^2|^2
=1-3k^2+4k^4-(2k+4k^3)x+8k^2x^2,
$
and
$|k^2-kz|^4=k^4(k^2-2kx+1)^2$.
Therefore,
\begin{equation}\label{Rational-eq2}
|\psi_R(e^{i\theta})^2-1|^2
=
\frac{(k^2+2kx+1)
\left(1-3k^2+4k^4-(2k+4k^3)x+8k^2x^2\right)}
{k^4(k^2-2kx+1)^2}.
\end{equation}
Substituting $k=\sqrt2+1$ in \eqref{Rational-eq2}, we obtain
\begin{equation}\label{Rational-eq3}
|\psi_R(e^{i\theta})^2-1|^2
=
\frac{(x+\sqrt2)
\left(4x^2-(1+3\sqrt2)x+6+3\sqrt2\right)}
{(1+\sqrt2)^3(x-\sqrt2)^2}.
\end{equation}
Let
\[
F(x)
=
\frac{(x+\sqrt2)
\left(4x^2-(1+3\sqrt2)x+6+3\sqrt2\right)}
{(1+\sqrt2)^3(x-\sqrt2)^2}.
\]
Then
\[
F'(x)
=
\frac{4\left(x^3-3\sqrt2 x^2-x-(4+3\sqrt2)\right)}
{(1+\sqrt2)^3(x-\sqrt2)^3}.
\]
For $x\in[-1,1]$, we have $x^3\le1$, $-x\le1$, and
$-3\sqrt2 x^2\le0$. Hence
\[
x^3-3\sqrt2 x^2-x-(4+3\sqrt2)
\le
-2-3\sqrt2<0.
\]
Since $(x-\sqrt2)^3<0$ on $[-1,1]$, it follows that
$F'(x)>0$ throughout the interval. Thus $F$ is strictly increasing
and attains its minimum at $x=-1$.
Therefore,
\[
\mathcal{R}(\varphi_{\mathrm L},\psi_R)
=
\sqrt{F(-1)}
=
8\sqrt2-11.
\]

Since the minimum is attained at $z_0=-1$, we have
$\psi_R(z_0)=2(\sqrt2-1)$.
Let $r_0=8\sqrt2-11$. Then
$\varphi_{\mathrm L}(-r_0)=\sqrt{1-r_0}
=2(\sqrt2-1)=\psi_R(-1)$.
Hence the radius is sharp.
\end{proof}	

The Ma--Minda starlike and convex classes associated with the cardioid-type function were introduced by Kumar and Kamaljeet~\cite{MR4216363}. These classes correspond to the function
\[
\psi_c(z)=1+ze^z.
\]
In particular,
$\mathcal{ST}_{c}=\mathcal{ST}(\psi_c)$ and
$\mathcal{CV}_{c}=\mathcal{CV}(\psi_c)$.
The following theorem determines the sharp inclusion radius of the
lemniscate-associated class in the cardioid-type class.

The following radius was obtained in
\cite[Theorem~3.7(i)]{MR4216363}.
We provide an analytical boundary-minimization proof.
\begin{theorem}
The $\mathcal{P}_{\psi_c}$-radius of the class $\mathcal{P}_{\varphi_{\mathrm{L}}}$ is
\[
\mathcal{R}(\varphi_{\mathrm{L}},\psi_c)=2e^{-1}-e^{-2} \approx 0.600424.
\]
\end{theorem}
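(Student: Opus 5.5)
We apply \eqref{R1-formula} with $\psi_c(z)=1+ze^z$. This is legitimate because $\psi_c$ is a Ma--Minda function (as in \cite{MR4216363}) and is continuous on $\overline{\mathbb D}$. The quantity to minimize then factors as
\[
|\psi_c(z)^2-1|=|\psi_c(z)-1|\,|\psi_c(z)+1|=|ze^z|\,|2+ze^z|.
\]
On $z=e^{i\theta}$ write $c=\cos\theta\in[-1,1]$, so that $|ze^z|=e^{c}$. Because of the symmetry $\theta\mapsto-\theta$, we may restrict to $\theta\in[0,\pi]$. The conjectured minimizer is $z_0=-1$, where the value is
\[
e^{-1}(2-e^{-1})=2e^{-1}-e^{-2}.
\]
Rather than differentiate the full squared modulus $e^{2c}\bigl(4+4e^{c}\cos(\theta+\sin\theta)+e^{2c}\bigr)$, which looks messy, I would split the boundary according to the sign of $c$.

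\emph{Case $c\in[-1,0]$.} Here $e^{c}\le1<2$, so the triangle inequality gives $|2+ze^z|\ge 2-e^{c}>0$. Hence
\[
|\psi_c(z)^2-1|\ge h(c):=e^{c}(2-e^{c})=1-(1-e^{c})^2 .
\]
On $[-1,0]$ we have $1-e^{c}\ge0$ and it decreases in $c$, so $h$ is increasing there. Therefore $h(c)\ge h(-1)=2e^{-1}-e^{-2}$, with equality only at $c=-1$, that is, at $z=-1$ (where the triangle inequality is also an equality, since $ze^z=-e^{-1}$ is real and negative).

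\emph{Case $c\in[0,1]$.} Here $|\psi_c(z)-1|=e^{c}\ge1$. I would use that $\operatorname{Re}\psi_c\ge0$ on $\overline{\mathbb D}$, which follows from $\psi_c$ being a Ma--Minda function and continuity. This gives $|\psi_c(z)+1|\ge\operatorname{Re}\psi_c(z)+1\ge1$. So the product is at least $1>2e^{-1}-e^{-2}\approx0.6004$.

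The only delicate point is the half-plane property $\operatorname{Re}(1+ze^z)\ge0$ on the arc $c\ge0$, i.e.\ $e^{c}\cos(\theta+\sin\theta)\ge-1$. If I did not want to rely on the citation, I would verify it directly. On $\theta\in[0,\pi/2]$ we have $\theta+\sin\theta\le\pi/2+1$. The cosine is negative only once $\theta+\sin\theta>\pi/2$, which forces $\theta$ to be near $\pi/2$ and hence $c$ to be small. An elementary estimate comparing $e^{\cos\theta}$ with $|\cos(\theta+\sin\theta)|\le\sin(\theta+\sin\theta-\pi/2)$ should settle it. Actually any bound $|2+ze^z|\ge 0.61e^{-c}$ would suffice here, so there is plenty of room.

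Combining the two cases, the minimum equals $2e^{-1}-e^{-2}<1$, which also confirms $\varphi_{\mathrm L}\not\prec\psi_c$. For sharpness, let $r_0=2e^{-1}-e^{-2}$. Then
\[
\varphi_{\mathrm L}(-r_0)=\sqrt{1-2e^{-1}+e^{-2}}=1-e^{-1}=\psi_c(-1),
\]
and Theorem~\ref{thm:general-radius} shows the radius is best possible.
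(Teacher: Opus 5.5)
Your proof is correct. It differs from the paper's only on the arc $\cos\theta\ge0$.

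On the arc $\cos\theta\le0$ your argument is the paper's: the reverse triangle inequality gives $|2+ze^z|\ge2-e^{\cos\theta}$, and then you use the monotonicity of $e^{c}(2-e^{c})$, which the paper writes in squared form as $\rho^2(2-\rho)^2$.

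On the arc $\cos\theta\ge0$ you use the half-plane property $\operatorname{Re}\psi_c\ge0$ on $\overline{\mathbb D}$ to get $|\psi_c(z)+1|\ge1$. The paper instead argues directly. Since $\alpha=\theta+\sin\theta\in[0,\pi/2+1]\subset[0,\pi)$, one has $\cos\alpha\ge-\sin1$, and completing the square gives
\[
|2+ze^z|^2\ge(e^{c}-2\sin1)^2+4\cos^21\ge4\cos^21>1 .
\]

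Your route is shorter and costs nothing extra logically. The inclusion $\psi_c(\mathbb D)\subset\mathbb H$ is already needed to invoke \eqref{R1-formula}: it is what makes $w\mapsto w^2-1$ univalent on a domain containing $\psi_c(\mathbb D)$ in Lemma~\ref{lem:inverse-hypotheses}.

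The paper's route is self-contained and does not rely on the imported fact $\operatorname{Re}(1+ze^z)>0$. That fact has only a modest margin, and exactly on your arc: $\min_\theta\operatorname{Re}\psi_c(e^{i\theta})\approx0.136$, attained near $\theta\approx1.42$, where $\cos\theta>0$. So your fallback sketch for verifying the half-plane property directly would need real work as written. If you want to avoid the citation, the paper's bound $|2+ze^z|\ge2\cos1\approx1.08$ is the elementary estimate you were looking for, and it clears your threshold $0.61$ comfortably.
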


\begin{proof}
Let $\psi_c(z)=1+ze^z$. By \eqref{R1-formula},
\begin{equation}\label{c-eq1}
\mathcal{R}(\varphi_{\mathrm{L}},\psi_c)
=
\min_{|z|=1}\left|\psi_c(z)^2-1\right|.
\end{equation}
Set $z=e^{it}$, $-\pi\le t\le\pi$. Since
$
\psi_c(e^{it})
=
1+e^{\cos t}e^{i(t+\sin t)},
$
we obtain
\begin{align*}
|\psi_c(e^{it})^2-1|^2
&=
\left|
2e^{\cos t}e^{i(t+\sin t)}
+
e^{2\cos t}e^{2i(t+\sin t)}
\right|^2\\
&=
e^{2\cos t}
\left|2+e^{\cos t}e^{i(t+\sin t)}\right|^2.
\end{align*}
Define,
$
F(t):=
|\psi_c(e^{it})^2-1|^2
=
e^{4\cos t}
+4e^{2\cos t}
+4e^{3\cos t}\cos(t+\sin t).
$
Since $F$ is even, it suffices to consider $t\in[0,\pi]$.
Let
$
\rho=e^{\cos t},
\,
\alpha=t+\sin t.
$
Then
\[
F(t)
=
\rho^2\left|2+\rho e^{i\alpha}\right|^2.
\]
Since $\cos t$ changes sign at $t=\pi/2$, we split the interval $[0,\pi]$ into
$[0,\pi/2]$ and $[\pi/2,\pi]$, corresponding respectively to
$\rho=e^{\cos t}\ge1$ and $\rho\le1$.
On $[\pi/2,\pi]$, we have $e^{-1}\le\rho\le1$. By the reverse triangle inequality,
$
\left|2+\rho e^{i\alpha}\right|\ge 2-\rho,
$
and hence
$
F(t)\ge \rho^2(2-\rho)^2=:g(\rho).
$
Differentiating,
$
g'(\rho)
=
4\rho(2-\rho)(1-\rho)\ge0,
\, e^{-1}\le\rho\le1.
$
Therefore $g$ is increasing on $[e^{-1},1]$, and
\[
F(t)
\ge
g(e^{-1})
=
e^{-2}(2-e^{-1})^2
=
(2e^{-1}-e^{-2})^2.
\]
Equality is attained at $t=\pi$, where $\rho=e^{-1}$ and
$\alpha=\pi$.

For $t\in[0,\pi/2]$, we have $\rho\ge1$ and
$0\le\alpha\le\pi/2+1<\pi$. Since $\cos\alpha$ is decreasing on
$[0,\pi]$,
\[
\cos\alpha
\ge
\cos\left(\frac{\pi}{2}+1\right)
=
-\sin1.
\]
Consequently,
\begin{align*}
F(t)
&=
\rho^2\left(4+\rho^2+4\rho\cos\alpha\right)\\
&\ge
\rho^2\left(4+\rho^2-4\rho\sin1\right)\\
&=
\rho^2\left((\rho-2\sin1)^2+4\cos^21\right)\\
&\ge
4\rho^2\cos^21.
\end{align*}
Since $\rho\ge1$ and $1<\pi/3$, we have
$\cos1>\cos(\pi/3)=1/2$, and therefore
$
F(t)>1,
\,
0\le t\le\pi/2.
$
On the other hand,
$
F(\pi)
=
(2e^{-1}-e^{-2})^2
<1.
$
Hence the global minimum occurs at $t=\pi$, and
\[
\mathcal{R}(\varphi_{\mathrm{L}},\psi_c)
=
\sqrt{F(\pi)}
=
2e^{-1}-e^{-2}.
\]
Since the minimum is attained at $z_0=-1$, we have
$\psi_c(z_0)=1-e^{-1}$.
Let $r_0=2e^{-1}-e^{-2}$. Then
$\varphi_{\mathrm L}(-r_0)=\sqrt{1-r_0}=1-e^{-1}=\psi_c(-1)$.
Hence the radius $r_0=2e^{-1}-e^{-2}$ is sharp.
\end{proof}

	\begin{figure}[!htbp]
	\centering
	\begin{subfigure}{0.32\textwidth}
		\centering
		\begin{tikzpicture}
			\begin{axis}[
			    axis lines=center,
			    xmin=-0.3, xmax=3.0,
			    ymin=-1.4, ymax=1.4,
			    xtick={ 1.0, 2.0},
			    ytick={-1.0, 1.0},
			    thick,
			    scale=0.6
			]
			
			    \addplot[
			        domain=0:360,
			        samples=250,
			        smooth,
			        color=blue,
			        line width=1pt
			    ]
			    ({exp(cos(x))*cos(deg(sin(x)))}, {exp(cos(x))*sin(deg(sin(x)))});
			    
		        \def\R{1-exp(-2)}
		
		        \addplot[
		            domain=-180:180,
		            samples=400,
		            smooth,
		            red,
		            line width=1pt
		        ]
		        ({lemX(\R,x)}, {lemY(\R,x)});
			\end{axis}
		\end{tikzpicture}
		\caption{$\mathcal{R}(\varphi_{\mathrm L},\psi_e)=1-e^{-2}$}
	\end{subfigure}\hfill
	\begin{subfigure}{0.32\textwidth}
		\centering
		\begin{tikzpicture}
			\begin{axis}[
			    axis lines=center,
			    width=9cm,
			    height=8.8cm,
			    xmin=-0.4, xmax=3.3,
			    ymin=-1.8, ymax=1.8,
			    xtick={1.0,2.0,3.0},
			    ytick={-1,1},
			    thick,
			    scale=0.5
			]
			\addplot[
			    domain=-180:180,
			    samples=400,
			    smooth,
			    color=blue,
			    line width=1pt,
			]
			({cos(x) + sqrt(abs(cos(x)) + (cos(x))^2)},
			 {sin(x) + sign(sin(2*x)) * sqrt(abs(cos(x)) - (cos(x))^2)});
			 
			\def\R{0.828}
				
			\addplot[
			    domain=-180:180,
			    samples=400,
			    smooth,
			    red,
			    line width=1pt
			]
			({lemX(\R,x)}, {lemY(\R,x)});
			\end{axis}
		\end{tikzpicture}
		\caption{$\mathcal{R}(\varphi_{\mathrm{L}},\psi_{\leftmoon})=2(\sqrt2-1)$}
	\end{subfigure}\hfill
	\begin{subfigure}{0.32\textwidth}
		\centering
	    \begin{tikzpicture}
	    \begin{axis}[
	        axis equal, 
	        axis lines=center,
	        xmin=-0.2, xmax=2.0,
	        ymin=-1.0, ymax=1.0,
	        xtick={0, 0.5, 1.0, 1.5},
	        ytick={-0.5, 0.5},
	        thick,
	        scale=0.6,
	        declare function={
	            R = (sin(deg(1)))^2; 
	            mag(\t) = sqrt(1 + 2*R*cos(\t) + R^2);
	            phiX(\t) = sqrt((mag(\t) + 1 + R*cos(\t))/2);
	            phiY(\t) = sign(sin(\t)) * sqrt(max(0, (mag(\t) - (1 + R*cos(\t)))/2));
	            qX(\t) = ((exp(cos(\t/2)) + exp(-cos(\t/2)))/2) * cos(deg(sin(\t/2)));
	            qY(\t) = ((exp(cos(\t/2)) - exp(-cos(\t/2)))/2) * sin(deg(sin(\t/2)));
	        }
	    ]
	
	    \addplot[
	        domain=0:360,
	        samples=400,
	        smooth,
	        color=blue,
	        line width=1pt,
	        variable=\t
	    ] 
	    ({qX(\t)}, {qY(\t)});
	
	    \addplot[
	        domain=0:360,
	        samples=400,
	        smooth,
	        color=red,
	        line width=1pt,
	        variable=\t
	    ] 
	    ({phiX(\t)}, {phiY(\t)});
	
	    \end{axis}
	    \end{tikzpicture}
		\caption{$\mathcal{R}(\varphi_{\mathrm{L}},\psi_{\varrho})=\sin^2(1)$}
	\end{subfigure}
		\caption{Sharp radii associated with the lemniscate function
		$\varphi_{\mathrm{L}}$ for $\psi_{e}$, $\psi_{\leftmoon}$ and $\psi_{\varrho}$.}
		\label{p2f1}
	\end{figure}
	
\section{Sharp Radii for the Lune-Associated Class}
In this section, we determine the sharp $\mathcal P_{\psi}$-radii of
$\mathcal P_{\varphi_{\leftmoon}}$ for six choices of $\psi$.
The corresponding results for the associated Ma--Minda starlike and
convex classes follow from Corollary~\ref{cor:maminda-class-radius}.

Wani and Swaminathan
\cite{MR4129627} obtained the nonsharp lower bound
$
\displaystyle{\frac{\sqrt{17}-1}{6}}
$
for the $\mathcal{ST}(\psi_{\mathrm{Ne}})$-radius of
$\mathcal{ST}(\varphi_{\leftmoon})$ and posed the problem of finding
the sharp value. The following theorem solves this problem and gives
the exact radius $8/15$.

\begin{theorem}
The $\mathcal{P}_{\psi_{\mathrm{Ne}}}$-radius of the class $\mathcal{P}_{\varphi_{\leftmoon}}$ is
\[
\mathcal{R}(\varphi_{\leftmoon},\psi_{\mathrm{Ne}})=\frac{8}{15}\approx0.533333.
\]
\end{theorem}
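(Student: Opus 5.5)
The plan is to apply Corollary~\ref{cor:three-radius-formulas}(ii) with $\psi=\psi_{\mathrm{Ne}}$, so that
\[
\mathcal R(\varphi_{\leftmoon},\psi_{\mathrm{Ne}})=\min_{|z|=1}\frac{|\psi_{\mathrm{Ne}}(z)^2-1|}{2|\psi_{\mathrm{Ne}}(z)|}.
\]

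\textbf{Step 1: the hypothesis $\psi_{\mathrm{Ne}}\neq0$ on $\partial\D$.} With $x=\cos\theta$ and the Chebyshev identities,
\[
\operatorname{Re}\psi_{\mathrm{Ne}}(e^{i\theta})=1+2x-\tfrac43x^3 .
\]
On $[-1,1]$ its critical points are $x=\pm1/\sqrt2$, so its minimum is the smaller of $1/3$ (at $x=-1$) and $1-2\sqrt2/3>0$ (at $x=-1/\sqrt2$). Hence the real part is positive on the circle and $\psi_{\mathrm{Ne}}$ has no zeros there. Non-subordination will follow from the boundary minimum being $<1$, as remarked after Corollary~\ref{cor:three-radius-formulas}.

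\textbf{Step 2: reduce to a polynomial in $x$.} From the nephroid computation in Section~3,
\[
|\psi_{\mathrm{Ne}}(e^{i\theta})^2-1|^2=\frac{16}{81}(4-3x^2)(13+18x-3x^2-12x^3).
\]
Expanding $|3+3z-z^3|^2$ directly gives
\[
|\psi_{\mathrm{Ne}}(e^{i\theta})|^2=\frac19\bigl(25+36x-12x^2-24x^3\bigr).
\]
The two natural candidates for the minimum are the real boundary points. At $x=-1$ (that is, $z=-1$) the value is $\tfrac43$, while at $x=1$ (that is, $z=1$) we have $\psi_{\mathrm{Ne}}(1)=5/3$. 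There the modulus is $(25/9-1)/(10/3)=8/15$, so the guess is that the minimum $8/15$ occurs at $z=1$. The claim to prove is $225|\psi^2-1|^2\ge 256|\psi|^2$, which is equivalent to
\[
G(x):=\frac{400}{9}(4-3x^2)(13+18x-3x^2-12x^3)-\frac{256}{9}\bigl(25+36x-12x^2-24x^3\bigr)\ge0,\qquad x\in[-1,1].
\]

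\textbf{Step 3: positivity of $G$ (the main obstacle).} $G$ is a quintic with $G(1)=0$, so I would factor $G(x)=(1-x)Q(x)$ with $Q$ a quartic. It then suffices to show $Q>0$ on $[-1,1]$. I would first try the tactic used for $\psi_{3L}$ and $\psi_{\mathrm{Ne}}$ in Section~3: split $Q=A+B$, where $A$ factors with a visible sign on $[-1,1]$ (for instance a multiple of $(x^2-1)$ times a negative factor), and $B$ is a low-degree remainder whose minimum is located by calculus. If that fails, I would write $Q$ as a sum of a squared quadratic plus a quadratic with negative discriminant, or fall back on Sturm's theorem to show $Q$ has no roots in $[-1,1]$ together with $Q(1)>0$. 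Equality then holds only at $x=1$, which gives the minimum $\sqrt{64/225}=8/15<1$.

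\textbf{Step 4: sharpness.} At $z_0=1$, $r_0=8/15$ gives
\[
\varphi_{\leftmoon}(r_0)=\frac{8}{15}+\sqrt{1+\frac{64}{225}}=\frac{8}{15}+\frac{17}{15}=\frac53=\psi_{\mathrm{Ne}}(1),
\]
so the boundary point is attained. Theorem~\ref{thm:general-radius} and Corollary~\ref{cor:maminda-class-radius} then give the sharp radius for $\mathcal P$, $\mathcal{ST}$ and $\mathcal{CV}$.
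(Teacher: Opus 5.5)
Your proposal follows the paper's argument essentially step for step. Your quintic satisfies $G(x)=\tfrac{16}{3}(x-1)A(x)$, where $A(x)=300x^4+375x^3-347x^2-708x-300$ is the paper's quartic, so $Q=-\tfrac{16}{3}A$; the paper proves $A<0$ on $[-1,1]$ by your fallback, namely a Sturm sequence showing no roots in $[-1,1]$ together with $A(-1)=-14$ and $A(1)=-680$. The one piece of work you have not actually done is Step~3, and there you should go straight to Sturm, because $Q(-1)=224/3$ is small compared with the coefficients of $Q$ and a naive splitting can fail; your cross-multiplication, justified by the positivity of $\operatorname{Re}\psi_{\mathrm{Ne}}$ in Step~1, neatly replaces the paper's separate proof that the denominator $25+36x-12x^2-24x^3$ is positive.
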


\begin{proof}
Let $\psi_{\mathrm{Ne}}(z)=1+z-z^3/3$. By \eqref{R2-formula},
\[
\mathcal{R}(\varphi_{\leftmoon},\psi_{\mathrm{Ne}})
=
\min_{|z|=1}
\left|
\frac{\psi_{\mathrm{Ne}}(z)^2-1}
{2\psi_{\mathrm{Ne}}(z)}
\right|.
\]
Setting $z=e^{i\theta}$ and $x=\cos\theta\in[-1,1]$,
using the Chebyshev polynomial identities, we obtain
$
|\psi_{\mathrm{Ne}}(e^{i\theta})|^2
=(1/9)\left(25+36x-12x^2-24x^3\right).
$
By \eqref{nep-eq2}
$
|\psi_{\mathrm{Ne}}(e^{i\theta})^2-1|^2
=(16/81)(4-3x^2)(13+18x-3x^2-12x^3).
$
Using the above expressions, we define $F(x)$ over $[-1, 1]$
\[
F(x):=
\left|
\frac{\psi_{\mathrm{Ne}}(e^{i\theta})^2-1}
{2\psi_{\mathrm{Ne}}(e^{i\theta})}
\right|^2
=
\frac{4(4-3x^2)(13+18x-3x^2-12x^3)}
{9(25+36x-12x^2-24x^3)}.
\]
To show that the minimum is $64/225$, consider
\[
F(x)-\frac{64}{225}
=
\frac{4(x-1)A(x)}{75B(x)},
\]
where
$
A(x)=300x^4+375x^3-347x^2-708x-300
$
and
$
B(x)=25+36x-12x^2-24x^3.
$
We first establish that $A(x)<0$ on $[-1,1]$.
A Sturm sequence for $A$, with positive rescaling of the
successive remainders, is
\[
\begin{aligned}
S_0(x)&=300x^4+375x^3-347x^2-708x-300,\\
S_1(x)&=1200x^3+1125x^2-694x-708,\\
S_2(x)&=16729x^2+30514x+15660,\\
S_3(x)&=-269171257x-629339424,\\
S_4(x)&=-1.
\end{aligned}
\]
Writing $V(x)$ for the number of sign variations
in the Sturm sequence, the endpoint signs are
\[
\begin{array}{c|ccccc|c}
x&S_0&S_1&S_2&S_3&S_4&V(x)\\ \hline
-1&-&-&+&-&-&2\\
 1&-&+&+&-&-&2
\end{array}
\]
Thus Sturm's theorem gives no zeros of $A$ in $(-1,1)$.
Since $A(-1)=-14$ and $A(1)=-680$, we conclude that
$A(x)<0$ throughout $[-1,1]$.
For the denominator polynomial, we have
\[
B'(x)=-12(6x^2+2x-3),
\]
whose zeros are
\[
x_-=\frac{-1-\sqrt{19}}6,
\qquad
x_+=\frac{-1+\sqrt{19}}6.
\]
Both belong to $[-1,1]$, and
\[
B(-1)=1,\qquad B(1)=25,\qquad
B(x_\pm)=\frac{169\pm38\sqrt{19}}9.
\]
All four values are positive, because
$
169^2-(38\sqrt{19})^2=1125>0.
$
Consequently, $B(x)>0$ throughout $[-1,1]$.
Since $x-1\le0$, $A(x)<0$, and $B(x)>0$, we obtain
\[
F(x)-\frac{64}{225}\ge0,
\qquad -1\le x\le1,
\]
with equality only at $x=1$. Therefore,
\[
\mathcal{R}(\varphi_{\leftmoon},\psi_{\mathrm{Ne}})
=
\sqrt{F(1)}
=
\frac{8}{15}.
\]
Since the minimum is attained at $z_0=1$, we have
$\psi_{\mathrm{Ne}}(z_0)=5/3$.
Let $r_0=8/15$. Then
$\varphi_{\leftmoon}(r_0)=5/3=\psi_{\mathrm{Ne}}(z_0)$.
Hence the radius $r_0=8/15$ is sharp.
\end{proof}

We now consider the rational function
$\psi_R(z)=1+(z^2+kz)/(k^2-kz)$, where $k=1+\sqrt2$.
The following theorem determines the sharp
$\mathcal P_{\psi_R}$-radius of
$\mathcal P_{\varphi_{\leftmoon}}$.
The minimum is established by factoring the difference
between the boundary expression and its value at $z=-1$.

\begin{theorem}
The $\mathcal{P}_{\psi_{R}}$-radius of the class $\mathcal{P}_{\varphi_{\leftmoon}}$ is
\[
\mathcal{R}(\varphi_{\leftmoon},\psi_{R})=\frac{5-3\sqrt2}{4} \approx0.18934.
\]
\end{theorem}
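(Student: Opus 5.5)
The plan is to apply \eqref{R2-formula} of Corollary~\ref{cor:three-radius-formulas}(ii) with $\psi=\psi_R$, and then to reduce the boundary minimum to a one-variable rational inequality in $x=\cos\theta$.

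\textbf{Step 1: hypotheses and a closed form for $\psi_R$.} First simplify
\[
\psi_R(z)=\frac{k^2+z^2}{k(k-z)},
\]
since $k^2-kz+z^2+kz=k^2+z^2$. For $|z|=1$ we have $|z^2|=1<k^2$ and $|z|<k$, so $\psi_R$ is continuous on $\overline{\mathbb D}$ and has no zeros on $\partial\mathbb D$. Hence Lemma~\ref{lem:inverse-hypotheses} applies. Non-subordination $\varphi_{\leftmoon}\not\prec\psi_R$ will follow once the minimum is shown to be $(5-3\sqrt2)/4<1$, by the remark after Corollary~\ref{cor:three-radius-formulas}.

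\textbf{Step 2: the boundary function.} Reuse the factorization \eqref{Rational-eq1} to get
\[
\frac{\psi_R(z)^2-1}{2\psi_R(z)}
=\frac{z(k+z)(z^2-kz+2k^2)}{2k(k-z)(z^2+k^2)}.
\]
With $z=e^{i\theta}$, the Chebyshev identities give
\[
|k+z|^2=k^2+2kx+1,\qquad |k-z|^2=k^2-2kx+1,\qquad |z^2+k^2|^2=k^4+2k^2(2x^2-1)+1.
\]
The value of $|z^2-kz+2k^2|^2$ is the one already computed before \eqref{Rational-eq2}. So
\[
F(x)=\frac{(k^2+2kx+1)\bigl(1-3k^2+4k^4-(2k+4k^3)x+8k^2x^2\bigr)}{4k^2(k^2-2kx+1)\bigl(k^4+2k^2(2x^2-1)+1\bigr)}.
\]
After substituting $k=1+\sqrt2$, the factor $k^2+2kx+1=2k(x+\sqrt2)$ simplifies as in \eqref{Rational-eq3}. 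The denominator is manifestly positive on $[-1,1]$.

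\textbf{Step 3: the value at $x=-1$.} At $z=-1$ one has $\psi_R(-1)=2(\sqrt2-1)=:w_0$. Then
\[
\frac{|w_0^2-1|}{2w_0}=\frac{8\sqrt2-11}{4(\sqrt2-1)}=\frac{5-3\sqrt2}{4}.
\]
The claim is therefore $F(x)\ge F(-1)=\bigl((5-3\sqrt2)/4\bigr)^2$ on $[-1,1]$.

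\textbf{Step 4: the main inequality.} As in the other proofs, form $F(x)-F(-1)$ over the common (positive) denominator and factor out $(x+1)$, so that $F(x)-F(-1)=(x+1)P(x)/Q(x)$ with $Q>0$ and $P$ a cubic with coefficients in $\mathbb Q(\sqrt2)$. The main obstacle is proving $P>0$ on $[-1,1]$, because of the surd coefficients.

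\begin{itemize}
\item First I would try splitting $P$ into an even part and an odd part, or writing it as a nonnegative square-type term plus a remainder. I would then bound the remainder crudely using $|x|\le1$ and $\sqrt2\in(1.41,1.42)$, as was done for $\psi_R$ in Section~3.
\item If that fails, I would use a Sturm sequence for $P$ (after isolating the $\sqrt2$-coefficients) to show $P$ has no root in $(-1,1)$. Checking $P(-1)>0$ and $P(1)>0$ then gives the sign.
\end{itemize}

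This yields equality only at $x=-1$, so $\mathcal R(\varphi_{\leftmoon},\psi_R)=(5-3\sqrt2)/4$.

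\textbf{Step 5: sharpness.} Let $r_0=(5-3\sqrt2)/4$. Since $H_{\leftmoon}(w)=(w^2-1)/(2w)$ inverts $\varphi_{\leftmoon}$ and $H_{\leftmoon}(w_0)=-r_0$, we get
\[
\varphi_{\leftmoon}(-r_0)=-r_0+\sqrt{1+r_0^2}=w_0=\psi_R(-1).
\]
So $\varphi_{\leftmoon}(-r_0)$ lies on $\partial\psi_R(\mathbb D)$, and the radius is sharp, as in Theorem~\ref{thm:general-radius}.
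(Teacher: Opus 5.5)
Your route is the paper's: apply \eqref{R2-formula}, write $F(x)=\bigl|(\psi_R^2-1)/(2\psi_R)\bigr|^2$ as a rational function of $x=\cos\theta$, factor $(x+1)$ out of $F(x)-F(-1)$, and get sharpness from $\varphi_{\leftmoon}(-r_0)=\psi_R(-1)$. Steps 1--3 and 5 are correct.

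The gap is Step 4, which is the only substantive step and the whole content of the theorem. You list strategies there but never produce $P$ or prove it positive, so as written the inequality $F(x)\ge F(-1)$ is asserted rather than proved. Your degree count is also off, because you did not carry out the simplification.

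Since $k^2+1=2\sqrt2\,k$ and $k^2-1=2k$, you have $k^2-2kx+1=2k(\sqrt2-x)$ and $|z^2+k^2|^2=(k^2-1)^2+4k^2x^2=4k^2(1+x^2)$. Hence
\[
F(x)=\frac{(x+\sqrt2)\bigl(4x^2-(1+3\sqrt2)x+6+3\sqrt2\bigr)}{8(3+2\sqrt2)(1+x^2)(\sqrt2-x)}
\]
is a ratio of cubics. After removing $(x+1)$, the cofactor is a quadratic, not a cubic:
\[
F(x)-F(-1)=\frac{(x+1)\bigl[(17-4\sqrt2)x^2-(11+3\sqrt2)x+(20+3\sqrt2)\bigr]}{16(3+2\sqrt2)(1+x^2)(\sqrt2-x)}.
\]
The bracket has positive leading coefficient and discriminant $182\sqrt2-1125<0$, so it is positive on all of $\mathbb R$. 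No Sturm sequence or crude $\sqrt2$-bounds are needed. Inserting this computation, which is exactly what the paper does via $g(x)-g(-1)$, gives $F(x)\ge F(-1)$ with equality only at $x=-1$ and completes your argument.
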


\begin{proof}

Let
$
\psi_R(z)
=1+(z^2+kz)/(k^2-kz),\;k=1+\sqrt2.
$
By \eqref{R2-formula},
\[
\mathcal R(\varphi_{\leftmoon},\psi_R)
=
\min_{|z|=1}
\left|
\frac{\psi_R(z)^2-1}
{2\psi_R(z)}
\right|.
\]
Let $z=e^{i\theta}$ and $x=\cos \theta \in [-1,1]$, using the Chebyshev polynomial identities, we obtain
\begin{align}\label{lune-rat-eq1}
|\psi_R(e^{i\theta})|^2
=
\frac{2(1+x^2)}
{(1+\sqrt2)(\sqrt2-x)}
\end{align}
and
\[
|\psi_R(e^{i\theta})^2-1|^2
=
\frac{(x+\sqrt2)
\left(4x^2-(1+3\sqrt2)x+6+3\sqrt2\right)}
{(1+\sqrt2)^3(x-\sqrt2)^2}.
\]
We define
\[
F(x):=
\left|
\frac{\psi_R(e^{i\theta})^2-1}
{2\psi_R(e^{i\theta})}
\right|^2
=
\frac{(x+\sqrt2)
\left(4x^2-(1+3\sqrt2)x+6+3\sqrt2\right)}
{8(3+2\sqrt2)(1+x^2)(\sqrt2-x)}.
\]
To minimize $F(x)$, we isolate its nonconstant factor
by defining
\[
g(x) = \frac{(x+\sqrt{2})\left(4x^2-(1+3\sqrt{2})x+6+3\sqrt{2}\right)}{(1+x^2)(\sqrt{2}-x)}, \quad x \in [-1, 1].
\]
Evaluating $g(x)$ at the boundary $x = -1$ gives $g(-1) = (9-4\sqrt{2})/2$. The difference is
\[
g(x) - g(-1) = \frac{(x+1)\left[(17-4\sqrt{2})x^2-(11+3\sqrt{2})x+(20+3\sqrt{2})\right]}{2(1+x^2)(\sqrt{2}-x)}.
\]
Let $A(x)=(17-4\sqrt{2})x^2-(11+3\sqrt{2})x+(20+3\sqrt{2})$. The leading coefficient $17-4\sqrt{2}$ is positive, and the discriminant $\Delta = -1125+182\sqrt{2}$ is strictly negative. Therefore, $A(x) > 0$ for all real $x$.
Since $x \in [-1, 1]$, we have $x+1 \ge 0$, which guarantees $g(x) - g(-1) \ge 0$. Consequently, $g(x)$ and $F(x)$ attain their absolute minimum at $x = -1$. Evaluating the minimal value yields
\[
F(-1) = \frac{57 + 40\sqrt{2}}{16(99 + 70\sqrt{2})} = \frac{(5 + 4\sqrt{2})^2}{16(7 + 5\sqrt{2})^2}.
\]
Hence the radius is
\[
\mathcal{R}(\varphi_{\leftmoon},\psi_{R})
= \sqrt{F(-1)} = \frac{5 + 4\sqrt{2}}{4(7 + 5\sqrt{2})} = \frac{5 - 3\sqrt{2}}{4}.
\]

Since the minimum is attained at $z_0=-1$, we have
$\psi_R(z_0)=2(\sqrt2-1)$.
Let $r_0=(5-3\sqrt2)/4$. Then
$\varphi_{\leftmoon}(-r_0)
=-r_0+\sqrt{1+r_0^2}
=2(\sqrt2-1)
=\psi_R(z_0)$.
Hence the radius is sharp.
\end{proof}

	\begin{figure}[!htbp]
	\centering
	\begin{subfigure}{0.48\textwidth}
	\centering
	\begin{tikzpicture}
		\begin{axis}[
			axis lines=center,
			xmin=-0.3, xmax=3.0,
			ymin=-1.4, ymax=1.4,
		    xtick={ 1.0, 2.0},
		    ytick={-1.0, 1.0},
		    thick,
		    scale=0.75
			]
	
    		\addplot[
    		    domain=-180:180,
    		    samples=400,
    		    smooth,
    		    color=blue,
    		    line width=1pt,
    		]
    		({1 + cos(x) - (1/3) * cos(3*x)},
    		 {sin(x) - (1/3) * sin(3*x)});
	\def\R{8/15}
		\addplot[
		    red,
		    domain=-180:180,
		    samples=500,
		    smooth,
		    line width=1.2pt
		]
		({luneX(\R,x)}, {luneY(\R,x)});
		\end{axis}
	\end{tikzpicture}
	\caption{$\mathcal R(\varphi_{\leftmoon},\psi_{\mathrm{Ne}})
	=8/15$.}

	\end{subfigure}
	\begin{subfigure}{0.48\textwidth}
	\centering
	\begin{tikzpicture}
		\begin{axis}[
			axis lines=center,
			xmin=-0.3, xmax=3.0,
			ymin=-1.4, ymax=1.4,
		    xtick={ 1.0, 2.0},
		    ytick={-1.0, 1.0},
		    thick,
		    scale=0.75
			]
	
   			\addplot[
   			    domain=-180:180,
   			    samples=400,
   			    smooth,
   			    color=blue,
   			    line width=1pt,
   			]
   			({(cos(2*x) - 2*sqrt(2)*cos(x) + 3 + 2*sqrt(2)) / den(x)},
   			 {(sin(2*x) + 2*sin(x)) / den(x)});	
	\def\R{(5-3*sqrt(2))/4}
		\addplot[
		    red,
		    domain=-180:180,
		    samples=500,
		    smooth,
		    line width=1.2pt
		]
		({luneX(\R,x)}, {luneY(\R,x)});
		\end{axis}
	\end{tikzpicture}
	\caption{$\mathcal R(\varphi_{\leftmoon},\psi_R)
	=(5-3\sqrt2)/4$.}
	\end{subfigure}
	\caption{Sharp radii associated with the lune function
	$\varphi_{\leftmoon}$ for $\psi_{\mathrm{Ne}}$ and $\psi_R$.}
	\label{fig:lune-radii}
	\label{p2f2}
	\end{figure}

\begin{theorem}
The $\mathcal{P}_{\psi_{car}}$-radius of the class $\mathcal{P}_{\varphi_{\leftmoon}}$ is
\[
\mathcal{R}(\varphi_{\leftmoon},\psi_{car})=\frac34=0.75.
\]
\end{theorem}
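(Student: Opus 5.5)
We apply Corollary~\ref{cor:three-radius-formulas}(ii) with $\psi=\psi_{car}(z)=1+z+z^2/2$.

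First we check the hypotheses. $\psi_{car}$ is a polynomial, so it is continuous on $\overline{\mathbb D}$. Since $\psi_{car}(z)=\bigl((z+1)^2+1\bigr)/2$, its zeros are $z=-1\pm i$. Both have modulus $\sqrt2>1$, so $\psi_{car}$ has no zeros on $\partial\mathbb D$. The non-subordination $\varphi_{\leftmoon}\not\prec\psi_{car}$ will follow, as noted after Corollary~\ref{cor:three-radius-formulas}, from the boundary minimum turning out to be less than $1$. Hence \eqref{R2-formula} gives
\[
\mathcal R(\varphi_{\leftmoon},\psi_{car})=\min_{-\pi\le\theta\le\pi}\left|\frac{\psi_{car}(e^{i\theta})^2-1}{2\psi_{car}(e^{i\theta})}\right|.
\]

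Next we reduce to a rational function of $x=\cos\theta\in[-1,1]$. For the numerator we reuse \eqref{Acar-eq2}:
\[
|\psi_{car}(e^{i\theta})^2-1|^2=\tfrac1{16}\bigl(64x^3+160x^2+152x+65\bigr).
\]
For the denominator we expand $|1+z+z^2/2|^2$ with the Chebyshev identities:
\[
|\psi_{car}(e^{i\theta})|^2=\tfrac94+3x+(2x^2-1)=\tfrac14\bigl(8x^2+12x+5\bigr).
\]
Therefore
\[
F(x):=\left|\frac{\psi_{car}^2-1}{2\psi_{car}}\right|^2=\frac{64x^3+160x^2+152x+65}{16\,(8x^2+12x+5)}.
\]
The candidate minimum is at $x=-1$, where $F(-1)=9/16$. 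By comparison, $F(1)=441/400$ is larger.

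The main step is to show $F(x)\ge 9/16$ on $[-1,1]$. We compute the difference:
\[
F(x)-\frac{9}{16}=\frac{64x^3+88x^2+44x+20}{16(8x^2+12x+5)}=\frac{(x+1)(64x^2+24x+20)}{16(8x^2+12x+5)}.
\]
Both quadratics have positive leading coefficients and negative discriminants, namely $576-5120<0$ and $144-160<0$. So both are positive on $\mathbb R$. Since $x+1\ge0$ on $[-1,1]$, we get $F(x)\ge9/16$, with equality only at $x=-1$. The only place an error could creep in is the factorization, so I would double-check it by expanding.

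Thus the minimum is attained at $z_0=-1$, and $\mathcal R(\varphi_{\leftmoon},\psi_{car})=\sqrt{9/16}=3/4<1$; this also confirms non-subordination.

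Finally, sharpness. At $z_0=-1$ we have $\psi_{car}(-1)=1/2$. With $r_0=3/4$,
\[
\varphi_{\leftmoon}(-r_0)=-\tfrac34+\sqrt{1+\tfrac9{16}}=-\tfrac34+\tfrac54=\tfrac12=\psi_{car}(-1).
\]
So the boundary point $\psi_{car}(-1)$ lies on $\varphi_{\leftmoon}(\partial(r_0\mathbb D))$, and the radius is sharp.
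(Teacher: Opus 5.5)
Your proof is correct and follows the paper's own route. You apply \eqref{R2-formula}, reduce to $F(x)=\frac{64x^3+160x^2+152x+65}{16(8x^2+12x+5)}$, and factor $F(x)-\frac{9}{16}$ with numerator $(x+1)(64x^2+24x+20)$; the paper writes this as $4(x+1)(16x^2+6x+5)$. You then conclude from the negative discriminants, with the extremal point $z_0=-1$ giving sharpness. Your explicit checks that $\psi_{car}$ has no zeros on $\partial\mathbb D$ and that the minimum $3/4<1$ yields $\varphi_{\leftmoon}\not\prec\psi_{car}$ are verifications the paper leaves implicit.
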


\begin{proof}

Let $\psi_{car}(z) = 1 +z+z^2/2$, then by \eqref{R2-formula}
\[
\mathcal{R}(\varphi_{\leftmoon},\psi_{car})
= \min_{|z|=1} \left| \frac{\psi_{car}(z)^2 - 1}{2\psi_{car}(z)} \right|.
\]
Set $z=e^{i\theta}$ and $ x = \cos\theta \in [-1, 1]$. A direct computaions gives $|\psi_{car}(e^{i\theta})|^2 = (1/4)(8x^2+12x+5).$
Using \eqref{Acar-eq2},
$
|\psi_{car}(e^{i\theta})^2 - 1|^2 =(1/16)(64x^3+160x^2+152x+65).
$
Combining these, we define $F(x)$ over $[-1, 1]$
\begin{equation}
F(x) = \left| \frac{\psi_{car}(e^{i\theta})^2 - 1}{2\psi_{car}(e^{i\theta})} \right|^2 = \frac{64x^3 + 160x^2 + 152x + 65}{16(8x^2 + 12x + 5)}.
\end{equation}
To show that $F(x)\ge 9/16 $ on $[-1,1]$, consider the difference $F(x)-9/16$.
\[
F(x) - \frac{9}{16}
= \frac{4(x + 1)(16x^2 + 6x + 5)}{16(8x^2 + 12x + 5)}.
\]
Since $x+1\ge0$ on $[-1,1]$, and the quadratics
$16x^2+6x+5$ and $8x^2+12x+5$ have negative discriminants
$-284$ and $-16$, respectively, with positive leading coefficients, both are strictly positive on $\mathbb{R}$. Hence
\[
F(x)-\frac{9}{16}\ge0
\quad\Longrightarrow\quad
F(x)\ge\frac{9}{16},
\qquad x\in[-1,1].
\]
The absolute minimum is attained if and only if $x = -1$. then the radius is
\[
\mathcal{R}(\varphi_{\leftmoon},\psi_{car})
= \sqrt{F(-1)} = \sqrt{\frac{9}{16}} = \frac{3}{4}.
\]
Since the minimum is attained at $x=-1$, we have
$\theta=\pm\pi$ and hence $z_0=e^{i\theta}=-1$.
Let $r_0=3/4$. At the boundary point $z_0=-1$,
$\psi_{car}(z_0)=1/2.$
Also, $\varphi_{\leftmoon}(-r_0)=1/2$.
The radius is sharp.
\end{proof}

The following radius is the case $n=4$ of
\cite[Theorem~5.3(c)]{MR4510346}.
Here we establish the boundary minimum through an
explicit factorization and polynomial positivity estimates.

For the three-leaf function
$\psi_{3L}(z)=1+4z/5+z^4/5$, the boundary minimum
is attained at nonreal points of the unit circle.
The following theorem determines the sharp
$\mathcal P_{\psi_{3L}}$-radius of
$\mathcal P_{\varphi_{\leftmoon}}$
by an exact polynomial factorization and positivity estimates.

\begin{theorem}
The $\mathcal{P}_{\psi_{3L}}$-radius of the class $\mathcal{P}_{\varphi_{\leftmoon}}$ is
\[
\mathcal{R}(\varphi_{\leftmoon},\psi_{3L})=\frac{3\sqrt{139}}{70} \approx 0.505278.
\]
\end{theorem}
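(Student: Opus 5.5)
We apply formula \eqref{R2-formula} of Corollary~\ref{cor:three-radius-formulas}. Its hypotheses hold here. First, $\psi_{3L}$ is a polynomial, so it is continuous on $\overline{\mathbb D}$. Second, $\psi_{3L}$ has no zeros on $\partial\mathbb D$, because $|1+4z/5|\ge 1/5=|z^4/5|$ there. The only boundary point with $|1+4z/5|=1/5$ is $z=-1$, and $\psi_{3L}(-1)=2/5\neq0$. Non-subordination will follow from the computed minimum being less than $1$. So we must minimize
\[
F(x)=\left|\frac{\psi_{3L}(e^{i\theta})^2-1}{2\psi_{3L}(e^{i\theta})}\right|^2,\qquad x=\cos\theta\in[-1,1].
\]
The Chebyshev identities give
\[
|\psi_{3L}(e^{i\theta})|^2=\frac{1}{25}\,Q(x),\qquad Q(x)=80x^4+32x^3-80x^2+16x+52.
\]
The numerator is already available from \eqref{3L-eq2}:
\[
|\psi_{3L}(e^{i\theta})^2-1|^2=\frac{1}{625}P(x),\qquad P(x)=(32x^3-24x+17)(160x^4+32x^3-160x^2+56x+137).
\]
Hence $F(x)=P(x)/(100\,Q(x))$. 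The claimed value is $\mathcal R^2=9\cdot139/4900=1251/4900$.

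Unlike the earlier cases, the statement says the minimum is attained at nonreal boundary points, so at an interior $x_0\in(-1,1)$. The plan is to prove the identity
\[
F(x)-\frac{1251}{4900}=\frac{49P(x)-1251\,Q(x)}{4900\,Q(x)},
\]
and then to factor the degree-seven numerator. We expect $49P-1251Q$ to have a double root at $x_0$. Solving $F'(x)=0$ together with $F(x)=1251/4900$ should show that $x_0$ is rational. We then write $49P-1251Q=(x-x_0)^2\,C(x)$ with $C$ a quintic and check the factorization by direct expansion.

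Three facts remain.
\begin{enumerate}
\item[(a)] $Q>0$ on $[-1,1]$. This is immediate, since $Q=25|\psi_{3L}|^2$ and $\psi_{3L}$ does not vanish on the circle.
\item[(b)] $C>0$ on $[-1,1]$. We first try to split $C$ as a square plus a positive polynomial, in the style of the $\psi_{3L}$ case of Section~3. If no clean split appears, we fall back on a Sturm sequence with the endpoint sign count, as in the $\psi_{\mathrm{Ne}}$ theorem, together with $C(\pm1)>0$.
\item[(c)] The minimum is compared with the endpoint values $F(\pm1)$, both of which must exceed $1251/4900$. This is automatic once (b) holds.
\end{enumerate}
Step (b) is the main obstacle. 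A quintic with sizeable coefficients on $[-1,1]$ may have nearby real roots outside the interval, and the Sturm coefficients can become unwieldy.

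For sharpness, let $\theta_0$ be the angle with $\cos\theta_0=x_0$, and set $\zeta_0=e^{i\theta_0}$ and $w_0=\psi_{3L}(\zeta_0)$. Define $z_0=H_{\leftmoon}(w_0)=(w_0^2-1)/(2w_0)$, so that $|z_0|=r_0=3\sqrt{139}/70$. Then $\varphi_{\leftmoon}(z_0)=w_0\in\partial\psi_{3L}(\mathbb D)$. Therefore $\varphi_{\leftmoon}(r\mathbb D)\not\subset\psi_{3L}(\mathbb D)$ for every $r>r_0$, and the radius is sharp, as in the proof of Theorem~\ref{thm:general-radius}.
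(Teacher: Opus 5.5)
This is essentially the paper's proof. It uses the same $F=P/(100Q)$ (the paper writes the denominator as $400B$ with $B=Q/4$) and the same subtraction of $1251/4900$. The factorization you anticipate appears there as $49P-1251Q=(2x-1)^2A(x)$ with
$A(x)=62720x^5+75264x^4-50176x^3-48148x^2+61796x+49069$,
so $x_0=1/2$, that is, $\theta_0=\pm\pi/3$.

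As written, however, your proposal only promises $x_0$ and step (b), and these carry all of the content.

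\begin{itemize}
\item You can find $x_0=1/2$ at once without solving $F'=0$. At $\theta=\pi/3$ one has $\sin3\theta=0$ and $\sin\theta+\sin4\theta=0$. Hence $|4+z^3|^2$, $|10+4z+z^4|^2$ and $Q$ are all stationary there, which forces the double root.
\item The paper handles the positivity you flag as the obstacle without a Sturm sequence. It writes $25A'(x)=(2800x^2+1344x-1180)^2+(1038464x^2+764440x+152500)$. The quadratic has negative discriminant and positive leading coefficient, so $A$ is increasing. Then $A(-1)=1845>0$ gives $A>0$ on $[-1,1]$.
\end{itemize}

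Your argument for $Q>0$ from the nonvanishing of $\psi_{3L}$ on the circle is valid. It is a legitimate shortcut for the paper's identity $Q/4=(4x^2-3)^2+4x^2(x+1)^2+4(x+1)$.
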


\begin{proof}
For $\psi_{3L}(z) = 1 + (4/5)z + (1/5)z^4$ then by \eqref{R2-formula}
\[
\mathcal{R}(\varphi_{\leftmoon},\psi_{3L})
= \min_{|z|=1} \left| \frac{\psi_{3L}(z)^2 - 1}{2\psi_{3L}(z)} \right|.
\]
Let $z=e^{i\theta}$ and   $x = \cos\theta \in [-1, 1]$, A direct computation, using the Chebyshev polynomial identities, yields $|\psi_{3L}(e^{i\theta})|^2 = (4/25)(20x^4 + 8x^3 - 20x^2 + 4x + 13),$ and using \eqref{3L-eq2},
\[
|\psi_{3L}(e^{i\theta})^2 - 1|^2 = \frac{1}{625}(32x^3- 24x +17)(160x^4 + 32x^3- 160x^2+ 56x +137).
\]
Combining these, we define the function $F(x)$ over $[-1, 1]$
\[
F(x)=
\left|
\frac{\psi_{3L}(e^{i\theta})^2-1}
{2\psi_{3L}(e^{i\theta})}
\right|^2
=
\frac{(32x^3-24x+17)
(160x^4+32x^3-160x^2+56x+137)}
{400(20x^4+8x^3-20x^2+4x+13)}.
\]
To show that the minimum of $F$ is $1251/4900$, consider
\[
F(x)-\frac{1251}{4900}
=
\frac{(2x-1)^2A(x)}
{19600B(x)},
\]
where
$
A(x)
=
62720x^5+75264x^4-50176x^3-48148x^2+61796x+49069
$
and
$
B(x)
=
20x^4+8x^3-20x^2+4x+13.
$
Differentiating $A$, we obtain
$
A'(x)
=
313600x^4+301056x^3-150528x^2-96296x+61796.
$
Moreover,
$
25A'(x)
=
(2800x^2+1344x-1180)^2
+
(1038464x^2+764440x+152500).
$
The quadratic in the second term has discriminant
$
\Delta=-49094526400<0
$
and positive leading coefficient; hence $A'(x)>0$ on $\mathbb R$.
Thus $A$ is strictly increasing, and since
$
A(-1)=1845>0,
$
we have $A(x)>0$ for $x\in[-1,1]$.
Moreover,
$
B(x)
=
(4x^2-3)^2+4x^2(x+1)^2+4(x+1).
$
For $-1<x\le1$, the last term is positive and the
remaining terms are nonnegative. At $x=-1$, we have
$B(-1)=1$. Hence $B(x)>0$ throughout $[-1,1]$.
Therefore,
\[
F(x)-\frac{1251}{4900}\ge0,
\qquad x\in[-1,1],
\]
with equality only when $2x-1=0$, that is, at $x=1/2$. Hence
\[
\mathcal{R}(\varphi_{\leftmoon},\psi_{3L})
=
\sqrt{F\left(\frac12 \right) }
=
\frac{3\sqrt{139}}{70}.
\]

Since the minimum is attained at $x=1/2$, we take $\theta_0=\pi/3$ and set
$z_0=e^{i\pi/3}$. Then
$\psi_{3L}(z_0)=13/10+(3\sqrt3/10)i$.
Let $r_0=3\sqrt{139}/70$ and
$w_0=\varphi_{\leftmoon}^{-1}(\psi_{3L}(z_0))
=78/245+(111\sqrt3/490)i$.
Since $|w_0|^2=1251/4900=r_0^2$, we have $|w_0|=r_0$ and
$\varphi_{\leftmoon}(w_0)=\psi_{3L}(z_0)$.
Hence the radius is sharp.
\end{proof}

For the exponential function $\psi_e(z)=e^z$,
the inverse composition simplifies to
$\varphi_{\leftmoon}^{-1}(\psi_e(z))=\sinh z$.
The following theorem evaluates its minimum modulus
on the unit circle and obtains the sharp
$\mathcal P_{\psi_e}$-radius of
$\mathcal P_{\varphi_{\leftmoon}}$.
The following radius was obtained by Devadiga et al.\
\cite[Theorem~8.1]{DevadigaEtAl2026Atlas}.
We provide an alternative proof by minimizing
$|\sinh z|$ on the unit circle.
\begin{theorem}
The $\mathcal{P}_{\psi_{e}}$-radius of the class $\mathcal{P}_{\varphi_{\leftmoon}}$ is
\[
\mathcal{R}(\varphi_{\leftmoon},\psi_{e})=\sin(1) \approx 0.841471.
\]
\end{theorem}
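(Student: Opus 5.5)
We apply \eqref{R2-formula} of Corollary~\ref{cor:three-radius-formulas} with $\psi=\psi_e$. Since $e^z\neq0$ on $\partial\mathbb D$, the hypotheses of Lemma~\ref{lem:inverse-hypotheses} are satisfied. The inverse composition simplifies to
\[
\frac{\psi_e(z)^2-1}{2\psi_e(z)}=\frac{e^{2z}-1}{2e^{z}}=\sinh z,
\]
so that
\[
\mathcal R(\varphi_{\leftmoon},\psi_e)=\min_{|z|=1}|\sinh z|.
\]
Writing $z=e^{i\theta}=u+iv$ with $u=\cos\theta$ and $v=\sin\theta$, we use the identity already derived in the $\psi_\varrho$ computation of Section~3:
\[
|\sinh(u+iv)|^2=\sinh^2 u+\sin^2 v .
\]
Since $v^2=1-u^2$, we set
\[
G(u)=\sinh^2u+\sin^2\!\bigl(\sqrt{1-u^2}\bigr),\qquad u\in[-1,1].
\]
The function $G$ is even in $u$, so it suffices to minimize it on $[0,1]$.

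This $G$ is exactly the function $F$ treated in the proof of the $\psi_\varrho$ theorem. There it was shown that
\[
F'(u)=\sinh 2u-\frac{u\sin\!\bigl(2\sqrt{1-u^2}\bigr)}{\sqrt{1-u^2}}>0 \quad\text{on } (0,1),
\]
using $\sinh 2u>2u$ and $\sin t<t$. So $G$ is strictly increasing on $[0,1]$, and by evenness its minimum on $[-1,1]$ is attained only at $u=0$, that is, at $\theta=\pm\pi/2$, $z=\pm i$. The value there is $G(0)=\sin^2 1$, giving $\min_{|z|=1}|\sinh z|=\sin 1$. Since $\sin1<1$, the remark following Corollary~\ref{cor:three-radius-formulas} also yields $\varphi_{\leftmoon}\not\prec\psi_e$.

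For sharpness, take $z_0=i$. Then $\psi_e(i)=e^{i}$ and
\[
w_0=\varphi_{\leftmoon}^{-1}(e^{i})=\sinh i=i\sin1,
\]
so $|w_0|=\sin 1=r_0$. Moreover, with the principal branch,
\[
\varphi_{\leftmoon}(i\sin1)=i\sin1+\sqrt{1-\sin^21}=i\sin1+\cos1=e^{i}=\psi_e(i).
\]
Hence $\varphi_{\leftmoon}(r_0\overline{\mathbb D})$ touches $\partial\psi_e(\mathbb D)$, and the radius is sharp.

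The only point needing care is checking that the branch of $\varphi_{\leftmoon}^{-1}$ used in \eqref{R2-formula}, namely $w\mapsto(w^2-1)/(2w)$, agrees with $\sinh z$. This holds because the identity $\frac{e^{2z}-1}{2e^z}=\sinh z$ is purely algebraic and involves no branch choice. The monotonicity step is already done in Section~3.
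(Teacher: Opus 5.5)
Your proof is correct and follows the paper's argument: you reduce via \eqref{R2-formula} to $\min_{|z|=1}|\sinh z|$, use $|\sinh(u+iv)|^2=\sinh^2u+\sin^2v$, get monotonicity from $\sin t<t$ and $\sinh t>t$, and obtain the minimum $\sin 1$ at $z=\pm i$ with the same contact point $w_0=i\sin 1$. The only difference is cosmetic. You parametrize by $u=\cos\theta$ and reuse the derivative from the $\psi_{\varrho}$ proof, which is literally the same function, while the paper parametrizes by $v=\sin\theta$ and shows that $H(v)=\sinh^2\sqrt{1-v^2}+\sin^2 v$ is decreasing; this is the same computation, since $H'(v)=-(v/u)\,G'(u)$.
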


\begin{proof}
Let $\psi_e(z) = e^z$ by \eqref{R2-formula}
\[
\mathcal{R}(\varphi_{\leftmoon},\psi_{e})
= \min_{|z|=1} \left| \frac{\psi_e(z)^2 - 1}{2\psi_e(z)} \right| 
= \min_{|z|=1} \left| \frac{e^{2z} - 1}{2e^z} \right| = \min_{|z|=1} |\sinh z|.
\]
Thus, the objective is to minimize the modulus of $f(z) = \sinh z$. Set $z=e^{i\theta}$, where $-\pi\le \theta\le\pi$. Using
$\sinh(x+iy)=\sinh x\cos y+i\cosh x\sin y$, we obtain
\[
|\sinh(x+iy)|^2=\sinh^2x+\sin^2y.
\]
With $x=\cos \theta$ and $y=\sin \theta$, define
\begin{equation}
    F(\theta) = \sinh^2(\cos \theta) + \sin^2(\sin \theta).
\end{equation}
Since $F(-\theta)=F(\theta)$ and $F(\pi-\theta)=F(\theta)$, it is enough to consider
$0\le \theta\le\pi/2$. Writing
$v=\sin \theta$ and $u=\cos \theta=\sqrt{1-v^2}$, where
$0\le v\le1$, the function becomes
\begin{equation}
    H(v) = \sinh^2\left(\sqrt{1-v^2}\right) + \sin^2(v).
\end{equation}
To find the critical points, we differentiate $H(v)$ with respect to $v$
\begin{align*}
    H'(v) &= 2\sinh\left(\sqrt{1-v^2}\right)\cosh\left(\sqrt{1-v^2}\right)\left(\frac{-v}{\sqrt{1-v^2}}\right) + 2\sin(v)\cos(v) \\
    &= -\frac{v}{\sqrt{1-v^2}}\sinh\left(2\sqrt{1-v^2}\right) + \sin(2v) \\
    &= \sin(2v) - \frac{v}{u}\sinh(2u).
\end{align*}
For $0<v<1$, we have $u=\sqrt{1-v^2}>0$.
The elementary inequalities $\sin \theta<\theta$ and $\sinh \theta>\theta$
for $\theta>0$ give
\[
\sin(2v)<2v,
\qquad
\frac{v}{u}\sinh(2u)>2v.
\]
Therefore, $H'(v) < 0$ for all $v \in (0, 1)$. $H(v)$ is strictly decreasing on $(0, 1)$. The minimum must occur at the boundary $v = 1$, which corresponds strictly to $\sin \theta = 1$, or $\theta = \pi/2$ (and by symmetry, $\theta = -\pi/2$). Evaluating the function at this minimum yields:
\begin{equation}
    F\left(\pm\frac{\pi}{2}\right) = \sinh^2(0) + \sin^2(\pm 1) = \sin^2(1).
\end{equation}
Taking the principal square root gives the exact minimum radius:
\[
  r_0 = \sqrt{\sin^2(1)} = \sin(1).
\]
Since the minimum is attained at $\theta_0=\pi/2$, we set
$z_0=e^{i\pi/2}=i$. Then $\psi_e(z_0)=e^i$.
Let $r_0=\sin(1)$ and $w_0=i\sin(1)$, so that $|w_0|=r_0$.
Also, $\varphi_{\leftmoon}(w_0)=e^i=\psi_e(z_0)$.
Hence the radius is sharp.
\end{proof}

Finally, we consider the lemniscate function
$\psi_{\mathrm L}(z)=\sqrt{1+z}$.
The inverse composition
$z/(2\sqrt{1+z})$ is unbounded at $z=-1$,
so Theorem~\ref{thm:general-radius} does not apply directly.
The following theorem determines the sharp
$\mathcal P_{\psi_{\mathrm L}}$-radius of
$\mathcal P_{\varphi_{\leftmoon}}$
using Remark~\ref{rem:boundary-pole}.
The following result was established by Devadiga et al.\
\cite[Theorem~7.11]{DevadigaEtAl2026Atlas}.

\begin{theorem}
The $\mathcal P_{\psi_{\mathrm L}}$-radius of the class
$\mathcal P_{\varphi_{\leftmoon}}$ is
\[
\mathcal R(\varphi_{\leftmoon},\psi_{\mathrm L})
=
\frac{1}{2\sqrt2}
\approx0.353553.
\]
\end{theorem}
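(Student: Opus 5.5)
We apply Remark~\ref{rem:boundary-pole}. With $\psi_{\mathrm L}(z)=\sqrt{1+z}$ and $H_{\leftmoon}(w)=(w^2-1)/(2w)$ we get
\[
F(z)=\varphi_{\leftmoon}^{-1}\bigl(\psi_{\mathrm L}(z)\bigr)=\frac{z}{2\sqrt{1+z}} .
\]
Univalence of $F$ in $\mathbb D$ follows from the proof of Lemma~\ref{lem:inverse-hypotheses}, since $\psi_{\mathrm L}(\mathbb D)\subset\mathbb H$ and $H_{\leftmoon}$ is univalent on $\mathbb H$. The function $F$ extends continuously to $\overline{\mathbb D}\setminus\{-1\}$. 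As $z\to-1$ in $\overline{\mathbb D}$ we have $|F(z)|\to\infty$, because the numerator tends to $-1$ and the denominator tends to $0$. So the remark applies with $E=\{-1\}$, and
\[
\mathcal R=\inf_{|\zeta|=1,\ \zeta\neq-1}|F(\zeta)| ,
\]
provided this infimum is less than $1$ (which gives $\varphi_{\leftmoon}\not\prec\psi_{\mathrm L}$).

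For the boundary minimum, put $z=e^{i\theta}$ with $x=\cos\theta\in(-1,1]$. Then $|1+z|^2=2+2x$, so
\[
|F(e^{i\theta})|^2=\frac{1}{4|1+z|}=\frac{1}{4\sqrt{2(1+x)}} .
\]
This is strictly decreasing in $x$, so its minimum on $(-1,1]$ is attained at $x=1$, i.e.\ $z_0=1$, with value $1/(4\cdot 2)=1/8$. Hence the minimum of $|F|$ is $1/(2\sqrt2)<1$, which also confirms non-subordination.

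For sharpness, put $r_0=1/(2\sqrt2)$. Then
\[
\varphi_{\leftmoon}(r_0)=r_0+\sqrt{1+r_0^2}=\frac{1}{2\sqrt2}+\frac{3}{2\sqrt2}=\sqrt2=\psi_{\mathrm L}(1),
\]
so $\varphi_{\leftmoon}(r_0\mathbb D)$ touches $\partial\psi_{\mathrm L}(\mathbb D)$. The argument then finishes as in Theorem~\ref{thm:general-radius} with $p=\varphi_{\leftmoon}$.

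The main obstacle is justifying the remark in this setting, namely that the pole at $-1$ contributes nothing finite to $\partial F(\mathbb D)$. The remaining boundary points $F(\zeta)$, $\zeta\ne-1$, lie in $\partial\Omega$ by the argument of Theorem~\ref{thm:general-radius}. Any other finite boundary point of $\Omega$ would be a limit of $F(z_n)$ with $z_n\to-1$, which is impossible since $|F(z_n)|\to\infty$. This handles the obstacle.
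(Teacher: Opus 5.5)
Your proposal is correct and follows the paper's proof essentially step for step. It uses the same inverse composition $z/(2\sqrt{1+z})$, the same appeal to Remark~\ref{rem:boundary-pole} with $E=\{-1\}$, the same minimization of $\tfrac12|1+e^{i\theta}|^{-1/2}$ at $\theta=0$, and the same contact point $\varphi_{\leftmoon}(r_0)=\sqrt2=\psi_{\mathrm L}(1)$. Your justification that $z_n\to-1$ produces no finite boundary point of $\Omega$, and your check that the value $1/(2\sqrt2)<1$ forces $\varphi_{\leftmoon}\not\prec\psi_{\mathrm L}$, fill in details the paper leaves implicit.
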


\begin{proof}

For $\psi_{\mathrm L}(z)=\sqrt{1+z}$, we have
\[
\varphi_{\leftmoon}^{-1}(\psi_{\mathrm L}(z))
=
\frac{z}{2\sqrt{1+z}}.
\]
The function is continuous on
$\overline{\mathbb D}\setminus\{-1\}$ and
\[
\left|
\frac{z}{2\sqrt{1+z}}
\right|
\to\infty
\qquad\text{as }z\to-1.
\]
Hence, by Remark~\ref{rem:boundary-pole},
\[
\mathcal R(\varphi_{\leftmoon},\psi_{\mathrm L})
=
\min_{\substack{|z|=1\\ z\ne-1}}
\left|
\frac{z}{2\sqrt{1+z}}
\right|.
\]
Set $z=e^{i\theta}$, where $-\pi<\theta<\pi$. Since
$
|1+e^{i\theta}|^2
=
(1+\cos \theta)^2+\sin^2\theta
=
2+2\cos \theta,
$
we obtain
\[
\left|
\frac{1}{2\sqrt{1+e^{i\theta}}}
\right|
=
\frac{1}{2\sqrt{|1+e^{i\theta}|}}
=
\frac{1}{2(2+2\cos \theta)^{1/4}}.
\]
Therefore, minimizing the above expression is equivalent to maximizing
$2+2\cos \theta$. Since $\cos \theta\le1$, the maximum is attained at $\theta=0$.
Thus
\[
\mathcal R(\varphi_{\leftmoon},\psi_{\mathrm L})
=
\frac{1}{2(4)^{1/4}}
=
\frac{1}{2\sqrt2}.
\]
The minimum is attained at $\theta_0=0$, so that $z_0=1$ and
$\psi_{\mathrm L}(z_0)=\sqrt2.$ Let $r_0=1/2\sqrt2.$ Then
$\varphi_{\leftmoon}(r_0)=r_0+\sqrt{1+r_0^2}=\sqrt2=\psi_{\mathrm L}(1).$
Hence the radius $r_0$ is sharp.
\end{proof}

\section{Sharp Radii for the Lima\c{c}on-Associated Class}

In this section, we determine the sharp $\mathcal P_{\psi}$-radii of
$\mathcal P_{\varphi_{\mathrm{Lim}}}$ for seven choices of $\psi$.
The corresponding results for the associated Ma--Minda starlike and
convex classes follow from Corollary~\ref{cor:maminda-class-radius}.
Using \eqref{R3-formula}, we evaluate the corresponding boundary
minima analytically. For later use, we record the rationalization
identity
\begin{equation}\label{eq:lim-rationalization}
\left|\sqrt{\psi(z)}-1\right|
=
\frac{|\psi(z)-1|}
{\left|\sqrt{\psi(z)}+1\right|}.
\end{equation}
Since
$\left|\sqrt{\psi(z)}+1\right|
\le 1+\sqrt{|\psi(z)|}$,
we obtain
\begin{equation}\label{eq:lim-rationalization-bound}
\left|\sqrt{\psi(z)}-1\right|
\ge
\frac{|\psi(z)-1|}
{1+\sqrt{|\psi(z)|}}.
\end{equation}
Moreover, setting $A=|\psi(z)-1|$ and using
$|\psi(z)|\le 1+A$, we get
\begin{equation}\label{eq:lim-A-bound}
\left|\sqrt{\psi(z)}-1\right|
\ge
\frac{A}{1+\sqrt{1+A}}
=\sqrt{1+A}-1.
\end{equation}
These estimates will be used in the subsequent proofs. In each case,
we establish sharpness by identifying an appropriate boundary contact
point. The corresponding sharp radii for the associated
Ma--Minda starlike and convex classes follow from
Corollary~\ref{cor:maminda-class-radius}.

We first consider the cardioid function
$\psi_C(z)=1+4z/3+2z^2/3$.
Rationalizing $\sqrt{\psi_C(z)}-1$ allows the boundary
minimum to be determined through modulus estimates,
with equality at $z=-1$.
The following theorem gives the sharp
$\mathcal P_{\psi_C}$-radius of
$\mathcal P_{\varphi_{\mathrm{Lim}}}$.

\begin{theorem}
The $\mathcal{P}_{\psi_C}$-radius of the class $\mathcal{P}_{\varphi_{\mathrm{Lim}}}$ is
\[
\mathcal{R}(\varphi_{\mathrm{Lim}},\psi_C)=\sqrt2\left(1- \frac{1}{\sqrt 3}\right) \approx0.597717. 
\]
\end{theorem}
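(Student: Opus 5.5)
By \eqref{R3-formula}, the task is to show that
\[
\min_{|z|=1}\left|\sqrt{\psi_C(z)}-1\right| = 1-\frac{1}{\sqrt3},
\]
where the branch of the square root is fixed by $\sqrt{\psi_C(0)}=1$, and that this minimum is attained at $z=-1$. Since $\psi_C(-1)=1/3$, the value $1-1/\sqrt3$ at $z=-1$ is immediate. The work is the lower bound on the rest of the unit circle. For that I would use the rationalization identity \eqref{eq:lim-rationalization} together with the factorization $\psi_C(z)-1=\tfrac23 z(z+2)$. For $|z|=1$ this gives
\[
A:=|\psi_C(z)-1|=\tfrac23|z+2|=\tfrac23\sqrt{4x+5},
\]
where $x=\cos\theta$. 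It also gives $|\psi_C(z)|^2=\tfrac19(16x^2+24x+9+\dots)$, which I would compute explicitly with the Chebyshev identities. A tidier route is
\[
\psi_C(z)=\tfrac13\bigl(1+2(1+z)^2\bigr),
\]
so that $|\psi_C|\le \tfrac13\bigl(1+2|1+z|^2\bigr)=\tfrac13(1+4(1+x))$.

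I would then apply the estimate \eqref{eq:lim-rationalization-bound}:
\[
\left|\sqrt{\psi_C}-1\right|\ge \frac{A}{1+\sqrt{|\psi_C|}}
\ge \frac{\tfrac23\sqrt{4x+5}}{1+\sqrt{(4x+5)/3}}.
\]
Write $s=\sqrt{4x+5}\in[1,3]$. The right-hand side becomes
\[
g(s)=\frac{2s/3}{1+s/\sqrt3}=\frac{2s}{3+\sqrt3\, s},
\]
which is increasing in $s$. Its minimum is therefore at $s=1$, that is $x=-1$, where
\[
g(1)=\frac{2}{3+\sqrt3}=\frac{2(3-\sqrt3)}{6}=1-\frac1{\sqrt3}.
\]
This matches the value at $z=-1$, so the two estimates combine into an exact minimum. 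It follows that the radius is $\sqrt2(1-1/\sqrt3)<1$. This also confirms $\varphi_{\mathrm{Lim}}\not\prec\psi_C$, so Corollary~\ref{cor:three-radius-formulas}(iii) applies, with Lemma~\ref{lem:inverse-hypotheses} supplying the hypotheses.

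For sharpness, take $r_0=\sqrt2(1-1/\sqrt3)$. Then
\[
\varphi_{\mathrm{Lim}}(-r_0)=\Bigl(1-\bigl(1-\tfrac1{\sqrt3}\bigr)\Bigr)^2=\tfrac13=\psi_C(-1),
\]
which exhibits the boundary contact point.

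The step I expect to need the most care is the bound $|\psi_C(z)|\le\tfrac13(4x+5)$. The identity $\psi_C=\tfrac13(1+2(1+z)^2)$ must be verified: $\tfrac13(1+2+4z+2z^2)=1+\tfrac43z+\tfrac23z^2$, which holds. The triangle inequality then gives $|\psi_C|\le\tfrac13(1+2|1+z|^2)$ with $|1+z|^2=2+2x$, so $|\psi_C|\le\tfrac13(5+4x)$. Since this equals $A^2\cdot\tfrac34$, the bound closes cleanly. If it had failed, the fallback would be to minimize the exact expression $|\sqrt{\psi_C}-1|^2=|\psi_C|+1-2\operatorname{Re}\sqrt{\psi_C}$ directly, but this appears unnecessary.
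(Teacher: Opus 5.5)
Your proposal is correct and is essentially the paper's proof: the same rationalization bound, the factorization $\psi_C(z)-1=\tfrac23 z(z+2)$, the identity $\psi_C=\tfrac13\bigl(1+2(1+z)^2\bigr)$ with the triangle inequality, and monotonicity of the resulting one-variable bound. Your variable $s=\sqrt{4x+5}$ is just $\sqrt3$ times the paper's $u=\sqrt{(5+4x)/3}$. The unfinished expression you wrote for $|\psi_C|^2$ is never used and can be dropped; its exact value is $\tfrac19(24x^2+40x+17)$.
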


\begin{proof}
Let $\psi_C(z)=1+4z/3+2z^2/3$. By \eqref{R3-formula},
\[
\mathcal R(\varphi_{\mathrm{Lim}},\psi_C)
=
\sqrt2\min_{|z|=1}
\left|\sqrt{\psi_C(z)}-1\right|.
\]
Set $z=e^{i\theta}$ and $x=\cos\theta\in[-1,1]$.
By \eqref{eq:lim-rationalization-bound},
\[
\left|\sqrt{\psi_C(z)}-1\right|
\ge
\frac{|\psi_C(z)-1|}
{1+\sqrt{|\psi_C(z)|}}.
\]
Since
$
\psi_C(z)-1=(2/3)z(z+2),
$
we have
$
|\psi_C(z)-1|=(2/3)|z+2|=(2/3)\sqrt{5+4x}.
$
Moreover,
$
\psi_C(z)
=
(1/3)\left(1+2(z+1)^2\right),
$
and hence, by the triangle inequality,
\[
|\psi_C(z)|
\le
\frac{1+2|z+1|^2}{3}
=
\frac{5+4x}{3}.
\]
Therefore,
\[
\left|\sqrt{\psi_C(z)}-1\right|
\ge
\frac{2}{3}\,
\frac{\sqrt{5+4x}}
{1+\sqrt{(5+4x)/3}}.
\]
Set $u = \sqrt{(5 + 4x)/3}$. Since $x \in [-1, 1]$, it follows that
$u \in \left[ 1/\sqrt3, \sqrt3\, \right]$. Then we have
\[
\left|\sqrt{\psi_C(z)}-1\right|
\ge
\frac{2}{3}\,
\frac{\sqrt{5+4x}}
{1+\sqrt{(5+4x)/3}}
=\frac{2}{\sqrt{3}} \left( \frac{u}{1 + u} \right).
\]
The function $h(u) = u/(1+u)$ is strictly increasing for $u > 0$. Thus, its minimum on the interval $\left[ 1/\sqrt3, \sqrt3 \,\right]$ occurs at the left endpoint, $u = 1/\sqrt{3}$, which corresponds to $x = -1$. Substituting this value into $h(u)$ yields
\[
\left| \sqrt{\psi_C(z)} - 1 \right| \ge \frac{2}{\sqrt{3}} \left( \frac{1/\sqrt{3}}{1 + 1/\sqrt{3}} \right) = \frac{2}{3 + \sqrt{3}} = 1 - \frac{1}{\sqrt{3}}.
\]
At $z=-1$, we have $\psi_C(-1)=1/3$, and hence
$\left|\sqrt{\psi_C(-1)}-1\right|=1-1/\sqrt3$.
Thus the lower bound is attained at $z=-1$, and therefore
\[
\mathcal{R}(\varphi_{\mathrm{Lim}},\psi_C)
=
\sqrt2\left(1-\frac{1}{\sqrt3}\right).
\]
Since the minimum is attained at $z_0=-1$, we have
$\psi_C(z_0)=1/3$.
Let $r_0=\sqrt2\left(1-1/\sqrt3\right)$.
Then $\varphi_{\mathrm{Lim}}(-r_0)=1/3=\psi_C(z_0)$.
Hence the radius is sharp.
\end{proof}

For the nephroid function
$\psi_{\mathrm{Ne}}(z)=1+z-z^3/3$,
we estimate $|\sqrt{\psi_{\mathrm{Ne}}(z)}-1|$
in terms of $|\psi_{\mathrm{Ne}}(z)-1|$.
The resulting bound is attained at $z=1$ and yields
the sharp $\mathcal P_{\psi_{\mathrm{Ne}}}$-radius of
$\mathcal P_{\varphi_{\mathrm{Lim}}}$
in the following theorem.

\begin{theorem}
The $\mathcal{P}_{\psi_{\mathrm{Ne}}}$-radius of the class
$\mathcal{P}_{\varphi_{\mathrm{Lim}}}$ is
\[
\mathcal{R}(\varphi_{\mathrm{Lim}},\psi_{\mathrm{Ne}})
=
\sqrt2\left(\sqrt{\frac{5}{3}}-1\right)\approx0.411528.
\]
\end{theorem}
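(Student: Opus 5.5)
The plan is to apply \eqref{R3-formula} of Corollary~\ref{cor:three-radius-formulas} and bound the boundary quantity from below with the estimate \eqref{eq:lim-A-bound}. That estimate only requires knowing $A=|\psi_{\mathrm{Ne}}(z)-1|$, and this is explicit on the unit circle.

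\textbf{Hypotheses.} The function $\psi_{\mathrm{Ne}}$ is a polynomial, so it is continuous on $\overline{\mathbb D}$. Lemma~\ref{lem:inverse-hypotheses} therefore gives the inverse-extension and boundary-continuity hypotheses, with the principal square root used throughout. Non-subordination will follow at the end, because the computed boundary minimum is $<1$.

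\textbf{Computing $A$.} For $|z|=1$ we have
\[
\psi_{\mathrm{Ne}}(z)-1=z\Bigl(1-\frac{z^2}{3}\Bigr),
\qquad\text{so}\qquad
A^2=\Bigl|1-\frac{z^2}{3}\Bigr|^2=\frac{10}{9}-\frac23\cos2\theta .
\]
This is at least $4/9$, with equality exactly when $\cos 2\theta=1$, that is, at $z=\pm1$. Hence $A\ge 2/3$ on $\partial\mathbb D$.

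\textbf{Lower bound.} The map $A\mapsto\sqrt{1+A}-1$ is increasing. Combining this with \eqref{eq:lim-A-bound} gives
\[
\bigl|\sqrt{\psi_{\mathrm{Ne}}(z)}-1\bigr|\ \ge\ \sqrt{1+A}-1\ \ge\ \sqrt{5/3}-1
\qquad (|z|=1).
\]
The inequality \eqref{eq:lim-A-bound} was derived from \eqref{eq:lim-rationalization} and the bound $|\sqrt{\psi}|\le\sqrt{1+A}$. Both hold for the principal branch, since $\psi_{\mathrm{Ne}}(\overline{\mathbb D})\subset\overline{\mathbb H}$.

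\textbf{Attainment.} At $z=1$ we have $\psi_{\mathrm{Ne}}(1)=5/3$, which is real and positive. So $|\sqrt{5/3}-1|=\sqrt{5/3}-1$ and the bound is attained. The other candidate $z=-1$ gives $\psi_{\mathrm{Ne}}(-1)=1/3$ and the value $1-1/\sqrt3$. This is larger than $\sqrt{5/3}-1$, as expected from the bound, so it does not change the minimum.

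\textbf{Conclusion and sharpness.} The minimum is $\sqrt{5/3}-1$, so $\mathcal R(\varphi_{\mathrm{Lim}},\psi_{\mathrm{Ne}})=\sqrt2(\sqrt{5/3}-1)\approx 0.4115<1$, which also confirms $\varphi_{\mathrm{Lim}}\not\prec\psi_{\mathrm{Ne}}$. For sharpness, set $r_0=\sqrt2(\sqrt{5/3}-1)$. Then
\[
\varphi_{\mathrm{Lim}}(r_0)=\bigl(1+(\sqrt{5/3}-1)\bigr)^2=\tfrac53=\psi_{\mathrm{Ne}}(1),
\]
so the boundary contact point is $z_0=1$.

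The only delicate point is checking that the chain of inequalities behind \eqref{eq:lim-A-bound} becomes equality at $z=1$. This holds because $\psi_{\mathrm{Ne}}(1)$ is a positive real number exceeding $1$, so both the triangle inequality and $|\psi|\le 1+A$ are equalities there. Beyond that, the argument is short.
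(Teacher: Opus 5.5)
Your proposal is correct and follows the paper's proof essentially step for step: the bound \eqref{eq:lim-A-bound}, the computation of $A^2$ (your $\frac{10}{9}-\frac23\cos2\theta$ equals the paper's $\frac49(4-3x^2)$), the minimum $A=2/3$ at $z=\pm1$, attainment at $z=1$, and sharpness via $\varphi_{\mathrm{Lim}}(r_0)=5/3=\psi_{\mathrm{Ne}}(1)$. Your remarks on the principal branch and on $\varphi_{\mathrm{Lim}}\not\prec\psi_{\mathrm{Ne}}$ are also correct; they only make explicit what the paper handles in Section~2.
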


\begin{proof}

For $\psi_{\mathrm{Ne}}(z)=1+z-z^3/3$, it follows from
\eqref{R3-formula} that
\[
\mathcal R(\varphi_{\mathrm{Lim}},\psi_{\mathrm{Ne}})
=
\sqrt2\min_{|z|=1}
\left|\sqrt{\psi_{\mathrm{Ne}}(z)}-1\right|.
\]
By using \eqref{eq:lim-rationalization-bound},
\[
\left|\sqrt{\psi_{\mathrm{Ne}}(z)}-1\right|
\ge
\frac{|\psi_{\mathrm{Ne}}(z)-1|}{1+\sqrt{|\psi_{\mathrm{Ne}}(z)|}}.
\]
Let $A=|\psi_{\mathrm{Ne}}(z)-1|$, then, by \eqref{eq:lim-A-bound},
\begin{equation}\label{lim-nep-eq1}
\left|\sqrt{\psi_{\mathrm{Ne}}(z)}-1\right|
\ge
\sqrt{1+A}-1.
\end{equation}
We now determine the minimum of $A$ on $|z|=1$. Set $z=e^{i\theta}$ and $x=\cos\theta\in[-1,1]$. using $z^2+\bar z^{\,2}=4x^2-2$, we obtain
\[
A
=
\left|1-\frac{z^2}{3}\right| \implies
A^2
=
1+\frac{1}{9}-\frac{1}{3}(z^2+\bar z^{\,2})
=
\frac{4}{9}(4-3x^2).
\]
The minimum of $A^2$ occurs when $\cos^2 \theta = 1$, giving $A^2 \ge 4/9 \implies A \ge 2/3$.
Since the function $f(A) = \sqrt{1+A} - 1$ is strictly increasing for $A > 0$, we substitute $A = 2/3$ into \eqref{lim-nep-eq1}
\[
\left| \sqrt{\psi_\mathrm{Ne}(z)} - 1 \right| \ge \sqrt{1 + \frac{2}{3}} - 1 = \sqrt{\frac{5}{3}} - 1.
\]
The lower bound is attained at $z=1$. At this point, we have $\psi_\mathrm{Ne}(1) = 5/3$. Evaluating the modulus gives
\[
\left| \sqrt{\psi_\mathrm{Ne}(1)} - 1 \right| = \sqrt{\frac{5}{3}} - 1.
\]
Thus the lower bound is attained at $z=1$, and therefore
\[
\mathcal{R}(\varphi_{\mathrm{Lim}},\psi_\mathrm{Ne})
= \sqrt{2} \left( \sqrt{\frac{5}{3}} - 1 \right).
\]
Since the minimum is attained at $z_0=1$, we have
$\psi_{\mathrm{Ne}}(z_0)=5/3$.
Let
$r_0=\sqrt2\left(\sqrt{5/3}-1\right)$.
Then
$\varphi_{\mathrm{Lim}}(r_0)
=\left(1+r_0/\sqrt2\right)^2
=5/3
=\psi_{\mathrm{Ne}}(z_0)$.
Hence the radius is sharp.
\end{proof}

We next consider the rational function
\[
\psi_R(z)
=
1+\frac{z^2+kz}{k^2-kz},
\qquad
k=1+\sqrt2.
\]
The required boundary minimum is obtained from
\eqref{eq:lim-rationalization-bound} by establishing the
monotonicity of an associated real-valued function on $[-1,1]$.

\begin{theorem}
The $\mathcal P_{\psi_R}$-radius of the class
$\mathcal P_{\varphi_{\mathrm{Lim}}}$ is
\[
\mathcal R(\varphi_{\mathrm{Lim}},\psi_R)
=
\sqrt2\left(1-\sqrt{2\sqrt2-2}\right)
\approx0.127025.
\]
\end{theorem}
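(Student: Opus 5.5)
We use \eqref{R3-formula}. By Lemma~\ref{lem:inverse-hypotheses} its hypotheses hold, because $\psi_R$ is a Ma--Minda function, continuous on $\overline{\mathbb D}$. So we need to show
\[
\min_{|z|=1}\bigl|\sqrt{\psi_R(z)}-1\bigr| = 1-\sqrt{2\sqrt2-2} =: c .
\]
The candidate point is $z_0=-1$. There $\psi_R(-1)=2(\sqrt2-1)$, as computed in the lemniscate section. This value is real and lies in $(0,1)$, so $|\sqrt{\psi_R(-1)}-1|=c$. Since $c<1$, this also confirms $\varphi_{\mathrm{Lim}}\not\prec\psi_R$, by the remark following Corollary~\ref{cor:three-radius-formulas}. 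It therefore remains to prove the lower bound $|\sqrt{\psi_R(z)}-1|\ge c$ on $|z|=1$. Sharpness then follows from $\varphi_{\mathrm{Lim}}(-r_0)=(1-c)^2=\psi_R(-1)$ with $r_0=\sqrt2\,c$.

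For the lower bound I will use \eqref{eq:lim-rationalization-bound}. The bound \eqref{eq:lim-A-bound} is useless here: at $z=-1$ we have $\psi_R<1$, so it cannot be tight there. Inequality \eqref{eq:lim-rationalization-bound}, by contrast, is an equality at $z=-1$, since $\sqrt{\psi_R(-1)}$ is a positive real. Set $z=e^{i\theta}$ and $x=\cos\theta$. From $\psi_R(z)-1=z(z+k)/(k(k-z))$ we get
\[
|\psi_R(z)-1|^2=\frac{k^2+2kx+1}{k^2(k^2-2kx+1)}=:P(x).
\]
Equation \eqref{lune-rat-eq1} gives
\[
|\psi_R(z)|^2=\frac{2(1+x^2)}{(1+\sqrt2)(\sqrt2-x)}=:Q(x).
\]
The task is thus reduced to the real inequality
\[
G(x):=\frac{\sqrt{P(x)}}{1+Q(x)^{1/4}}\ \ge\ c=G(-1),\qquad x\in[-1,1].
\]

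The numerator $P$ is increasing in $x$: both factors move the right way as $x$ increases. The denominator is the difficulty, because $Q$ is not monotone. Its derivative has the sign of $-x^2+2\sqrt2x+1$, so $Q$ first decreases from $x=-1$ and only then increases. Hence $G$ is not obviously increasing, and this is the main obstacle.

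My first attempt is the rearrangement
\[
\sqrt{P(x)}-c\ \ge\ c\,Q(x)^{1/4}.
\]
The left side is positive, because $\sqrt{P(x)}\ge\sqrt{P(-1)}=3-2\sqrt2>c$. I would raise both sides to the fourth power in stages: square once, isolate the remaining $\sqrt{P}$ term, and square again. After clearing the positive denominators $(k^2-2kx+1)$ and $(\sqrt2-x)$, this yields a polynomial inequality in $x$ with coefficients in $\mathbb Q(\sqrt2,c)$. Equality holds at $x=-1$, so the resulting polynomial should factor as $(x+1)$ times a remaining factor. I would then prove that factor positive on $[-1,1]$ by the methods used earlier: completing squares, discriminants, or a Sturm sequence.

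If the radicals in $c$ make this too messy, I would split $[-1,1]$. On a subinterval $[x_1,1]$ where $Q$ is increasing, I would use the crude bound $\sqrt{P(x)}\ge\sqrt{P(x_1)}$ and $Q(x)\le Q(1)$. On $[-1,x_1]$, where $Q(x)\le Q(-1)$, the inequality $G(x)\ge G(-1)$ follows immediately, because $P$ is increasing.
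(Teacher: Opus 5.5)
Your reduction is the paper's: you use the rationalization bound, which is an equality at $z=-1$, and reduce the problem to $G(x)\ge G(-1)=c$ on $[-1,1]$. Your $G$ is the paper's $H$, and your $P,Q$ agree with \eqref{eq:R-A} and \eqref{eq:R-V}. After that the routes differ.

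The paper proves that $G$ is strictly increasing on all of $[-1,1]$ by comparing logarithmic derivatives. It shows that $(\log\sqrt P)'=\sqrt2/(2-x^2)$ dominates $\frac{Q^{1/4}}{1+Q^{1/4}}(\log Q^{1/4})'$ wherever the latter is nonnegative, and this comes down to the sign of an explicit quartic.

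Your fourth-power route is unnecessary, and as written it is only a plan: you never produce or check the cofactor. Your splitting fallback, however, is already a complete and more elementary proof once $x_1$ is fixed; $x_1=0$ works, and you should make this the main argument.
\begin{itemize}
\item On $[-1,0]$: the condition $Q(x)\le Q(-1)$ is equivalent to $(x+1)(x-3+2\sqrt2)\le0$, so it holds exactly on $[-1,3-2\sqrt2]\supset[-1,0]$. Since $P$ is increasing, $G(x)\ge G(-1)=c$ there.
\item On $[0,1]$: $Q$ is increasing, so $Q^{1/4}\le Q(1)^{1/4}=\sqrt2$, while $\sqrt{P(x)}\ge\sqrt{P(0)}=\sqrt2-1$. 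Hence $G(x)\ge(\sqrt2-1)/(\sqrt2+1)=3-2\sqrt2$.
\item Finally, $3-2\sqrt2>1-\sqrt{2\sqrt2-2}$, because $y=2\sqrt2-2\in(0,1)$ gives $\sqrt y>y$.
\end{itemize}

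Your argument replaces the paper's global monotonicity, which needs a derivative computation and a quartic sign argument, with two endpoint comparisons. It works because of the wide slack on $[0,1]$, where $G\ge 0.17$ against $c\approx 0.09$. The paper's method is the one that would still work if no such slack existed.

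A minor point: in the non-subordination remark, the quantity to compare with $1$ is $\sqrt2\,c$, not $c$. It is also less than $1$, so your conclusion stands.
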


\begin{proof}
Let
$
\psi_R(z)
=1+(z^2+kz)/(k^2-kz),\; k=1+\sqrt2.
$
By \eqref{R3-formula},
\[
\mathcal R(\varphi_{\mathrm{Lim}},\psi_R)
=
\sqrt2\min_{|z|=1}
\left|\sqrt{\psi_R(z)}-1\right|.
\]
For
$
\psi_R(z)=\dfrac{k^2+z^2}{k^2-kz},
$
put $z=e^{i\theta}$ and $x=\cos\theta\in[-1,1]$. Then,
using \eqref{lune-rat-eq1}, we obtain
\begin{equation}\label{eq:R-V}
V(x):=|\psi_R(e^{i\theta})|^2
=
2(\sqrt2-1)
\frac{1+x^2}{\sqrt2-x}.
\end{equation}
since, $\psi_R(z)-1=(z^2+kz)/(k^2-kz))$, a direct calculation gives
\begin{equation}\label{eq:R-A}
|\psi_R(e^{i\theta})-1|^2=(\sqrt2-1)^2
\frac{\sqrt2+x}{\sqrt2-x}.
\end{equation}
By \eqref{eq:lim-rationalization-bound},
\[
\left|\sqrt{\psi_R(e^{i\theta})}-1\right|
\ge
\frac{|\psi_R(e^{i\theta})-1|}
{1+\sqrt{|\psi_R(e^{i\theta})|}}=\frac{|\psi_R(e^{i\theta})-1|}{1+V(x)^{1/4}}.
\]
Define
\[
H(x)
=
\frac{|\psi_R(e^{i\theta})-1|}{1+V(x)^{1/4}},
\qquad -1\le x\le1.
\]
We show that $H$ is strictly increasing on $[-1,1]$.
From \eqref{eq:R-A},
\[
\frac{d}{dx}\log|\psi_R(e^{i\theta})-1|
=
\frac{\sqrt2}{2-x^2},
\]
while \eqref{eq:R-V} gives
\[
\frac{d}{dx}\log V(x)^{1/4}
=
\frac14\left(
\frac{2x}{1+x^2}
+
\frac{1}{\sqrt2-x}
\right).
\]
Set
\[
B(x)
=
\frac14\left(
\frac{2x}{1+x^2}
+
\frac{1}{\sqrt2-x}
\right).
\]
Then
\[
\frac{H'(x)}{H(x)}
=
\frac{\sqrt2}{2-x^2}
-
\frac{V(x)^{1/4}}
{1+V(x)^{1/4}}\,B(x).
\]
If $B(x)<0$, then $H'(x)>0$. If $B(x)\ge0$, then
\[
\frac{H'(x)}{H(x)}
>
\frac{\sqrt2}{2-x^2}-B(x).
\]
A simplification gives
\[
\frac{\sqrt2}{2-x^2}-B(x)
=
-\frac{P(x)}
{4(\sqrt2-x)(2-x^2)(1+x^2)},
\]
where $P(x)=x^4+2\sqrt2\,x^3-11x^2+8\sqrt2\,x-6$. For $-1\le x\le0$, using
$x^4\le x^2$, $2\sqrt2\,x^3\le0$, and
$8\sqrt2\,x\le0$, we obtain
$P(x)\le-10x^2-6<0$.
For $0\le x\le1$, using $x^4\le x^2$ and $x^3\le x$, we obtain
\[
P(x)
\le
-10x^2+10\sqrt2\,x-6
=
-10\left(x-\frac1{\sqrt2}\right)^2-1<0.
\]
Thus $H'(x)>0$ throughout $[-1,1]$, and hence $H$ is strictly
increasing.
Therefore,
$
\left|\sqrt{\psi_R(e^{i\theta})}-1\right|
\ge
H(x)
\ge
H(-1).
$
At $x=-1$, corresponding to $z=-1$, we have
$\psi_R(-1)=2\sqrt2-2>0$. Hence equality holds in
\eqref{eq:lim-rationalization-bound}, and
$
H(-1)
=
\left|\sqrt{\psi_R(-1)}-1\right|
=
1-\sqrt{2\sqrt2-2}.
$
Consequently,
\[
\mathcal R(\varphi_{\mathrm{Lim}},\psi_R)
=
\sqrt2\left(1-\sqrt{2\sqrt2-2}\right).
\]

The minimum is attained at $z_0=-1$. Let
$r_0=\sqrt2\left(1-\sqrt{2\sqrt2-2}\right)$. Then
$
\varphi_{\mathrm{Lim}}(-r_0)
=
\left(1-{r_0}/{\sqrt2}\right)^2
=
2\sqrt2-2
=
\psi_R(-1).
$
Hence the radius $r_0$ is sharp.
\end{proof}

Next, for the exponential function $\psi_e(z)=e^z$,
the normalized square root satisfies $\sqrt{e^z}=e^{z/2}$
on $\overline{\mathbb D}$.
The following theorem determines the sharp
$\mathcal P_{\psi_e}$-radius of
$\mathcal P_{\varphi_{\mathrm{Lim}}}$
by reducing the boundary minimum to a real-variable
monotonicity problem.

\begin{theorem}
The $\mathcal{P}_{\psi_e}$-radius of the class $\mathcal{P}_{\varphi_{\mathrm{Lim}}}$ is
\[
\mathcal{R}(\varphi_{\mathrm{Lim}},\psi_e)=\sqrt2\left(1- \frac{1}{\sqrt e}\right) \approx0.55645.
\]
\end{theorem}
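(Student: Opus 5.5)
By \eqref{R3-formula}, with the branch fixed by $\sqrt{\psi_e(0)}=1$, we have $\sqrt{e^z}=e^{z/2}$ on $\overline{\mathbb D}$. Hence
\[
\mathcal R(\varphi_{\mathrm{Lim}},\psi_e)=\sqrt2\min_{|z|=1}\left|e^{z/2}-1\right|.
\]
Lemma~\ref{lem:inverse-hypotheses} applies because $e^z$ is a Ma--Minda function continuous on $\overline{\mathbb D}$. The non-subordination condition will follow at the end, since the minimum turns out to be less than $1$. The plan is to put $z=e^{i\theta}$ and write
\[
G(\theta):=\left|e^{e^{i\theta}/2}-1\right|^2=e^{\cos\theta}-2e^{(\cos\theta)/2}\cos\!\Big(\frac{\sin\theta}{2}\Big)+1 .
\]
Then show that $G$ is minimized at $\theta=\pi$, where $G(\pi)=(1-e^{-1/2})^2$.

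First I would find a cheap lower bound that reaches this value at $\theta=\pi$. By the reverse triangle inequality, $|e^{z/2}-1|\ge |e^{z/2}|-1$ is useless near $\theta=\pi$. The bound
\[
|e^{z/2}-1|\ge 1-|e^{z/2}|=1-e^{(\cos\theta)/2}
\]
is sharp at $\theta=\pi$ and gives $1-e^{(\cos\theta)/2}\ge 1-e^{-1/2}$ only when $\cos\theta$ is minimal. It is the wrong direction elsewhere, since it decreases as $\cos\theta$ grows. So I will argue directly with $G$.

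Write $\rho=e^{(\cos\theta)/2}\in[e^{-1/2},e^{1/2}]$ and $\alpha=(\sin\theta)/2\in[0,1/2]$ for $\theta\in[0,\pi]$; by evenness it suffices to take $\theta\in[0,\pi]$. Then
\[
G=\rho^2-2\rho\cos\alpha+1=(\rho-1)^2+4\rho\sin^2(\alpha/2).
\]
I will split into two cases.

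\textbf{Case $\theta\in[0,\pi/2]$.} Here $\rho\ge1$, and I claim $G$ is bounded below by something larger than the target. Both terms are nonnegative, but they cannot vanish simultaneously except at a point where $\rho=1$ and $\alpha=0$, which is impossible: $\rho=1$ forces $\theta=\pi/2$ and then $\alpha=1/2$. To make this quantitative, use $\sin(\alpha/2)\ge \alpha/\pi$. With $c=\cos\theta$ and $s=\sin\theta$, we get $(\rho-1)^2\ge c^2/4$ (from $e^t-1\ge t$) and $4\rho\sin^2(\alpha/2)\ge 4\alpha^2/\pi^2=s^2/\pi^2$. Hence
\[
G\ge \min\!\Big(\frac14,\frac1{\pi^2}\Big)=\frac1{\pi^2}\approx0.101,
\]
which exceeds $(1-e^{-1/2})^2\approx0.155$? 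No, it does not, so I would sharpen this. Using $\sin(\alpha/2)\ge(\alpha/2)(1-\alpha^2/24)$ gives a bound close to $s^2/4$, so $G\gtrsim\min(c^2,s^2)\cdot\frac14\cdot(\text{const near }1)$, still only about $1/4\cdot$... In practice I expect to need $G\ge 0.155$ on $[0,\pi/2]$, and I would prove it by treating $G$ as a function of $\theta$ and showing $G'\le 0$ throughout $(0,\pi)$. That mirrors the $\varphi_{\mathrm L},\psi_e$ proof in Section~3.

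\textbf{Main step: monotonicity of $G$.} Differentiating gives
\[
G'(\theta)=-\rho\Big[\rho\sin\theta-\sin\Big(\theta+\frac{\sin\theta}{2}\Big)\Big],
\]
which is exactly the $\psi_e$ computation from Section~3 with $2$ replaced by $1/2$. The task is to show $\rho\sin\theta>\sin(\theta+\tfrac12\sin\theta)$ on $(0,\pi)$.

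For $\theta\in(0,\pi/2]$, expand
\[
\sin\Big(\theta+\frac{\sin\theta}{2}\Big)=\sin\theta\cos\Big(\frac{\sin\theta}{2}\Big)+\cos\theta\sin\Big(\frac{\sin\theta}{2}\Big)<\sin\theta\Big(1+\frac{\cos\theta}{2}\Big)\le\sin\theta\,e^{(\cos\theta)/2}.
\]

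For $\theta\in(\pi/2,\pi)$, this trick fails because $\theta+\tfrac12\sin\theta$ may stay below $\pi$. Put $u=\pi-\theta$. The inequality becomes
\[
e^{-(\cos u)/2}\sin u>\sin\Big(u-\frac{\sin u}{2}\Big).
\]
Expand the right side as $\sin u\cos(\tfrac{\sin u}{2})-\cos u\sin(\tfrac{\sin u}{2})$. It then suffices to show
\[
\cos\Big(\frac{\sin u}{2}\Big)-\cos u\,\frac{\sin(\frac{\sin u}{2})}{\sin u}<e^{-(\cos u)/2}.
\]
Using $\sin(y)\ge y-y^3/6$ and $\cos y\le1-y^2/2+y^4/24$ with $y=\tfrac12\sin u$, the left side is at most
\[
1-\frac{\cos u}{2}-\frac{\sin^2u}{8}+\frac{\cos u\,\sin^2u}{48}+O(\sin^4u).
\]
The right side is at least $1-\frac{\cos u}{2}+\frac{\cos^2u}{8}-\frac{\cos^3 u}{48}$. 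Comparing the two reduces to an elementary polynomial inequality in $\cos u\in(0,1)$, which holds because the difference is of order $\tfrac18(\cos^2u+\sin^2u)=\tfrac18$ minus small terms. This last comparison is the main obstacle, and I would verify it carefully with explicit remainder bounds.

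Once $G'<0$ on $(0,\pi)$ is established, $G$ is minimized at $\theta=\pi$. This gives $\min|e^{z/2}-1|=1-e^{-1/2}<1$, which also confirms non-subordination, and so $\mathcal R=\sqrt2(1-1/\sqrt e)$. For sharpness, take $r_0=\sqrt2(1-e^{-1/2})$; then $\varphi_{\mathrm{Lim}}(-r_0)=(e^{-1/2})^2=e^{-1}=\psi_e(-1)$.
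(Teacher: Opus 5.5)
Your final argument is correct and is essentially the paper's proof. Since $G(\theta)=F(\tfrac12\cos\theta)$, your $G'(\theta)$ is the paper's $F'(x)$ multiplied by $-\tfrac12\sin\theta$, your split at $\theta=\pi/2$ is the paper's split at $x=0$, and each half uses the same elementary inequalities. The comparison you flag as the main obstacle is immediate: the difference of your two bounds is exactly $\frac18-\frac{\cos u}{48}-\frac{\sin^4u}{384}\ge\frac{39}{384}>0$, and the abandoned lower-bound attempt in your Case $\theta\in[0,\pi/2]$ can simply be deleted.
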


\begin{proof}
Let $\psi_e(z) = e^z$. By \eqref{R3-formula},
\[
\mathcal{R}(\varphi_{\mathrm{Lim}},\psi_e)
=\sqrt2\min_{-\pi\le\theta\le\pi}\left|{\sqrt{\psi_e(e^{i\theta})}-1}\right|
\]
For $z=e^{i\theta}$, we have $\operatorname{Im}z=\sin \theta\in[-1,1]$.
Thus the principal argument of $e^z$ equals
$\operatorname{Im}z$, and consequently
$
\sqrt{e^z}=e^{z/2}.
$
Now the problem simplifies to minimizing the modulus $|\exp(e^{i\theta}/2)-1|$ for $\theta \in [-\pi, \pi]$.

Let $x+iy = e^{i\theta}/2$, which gives $x = (\cos \theta)/2$ and $y = (\sin \theta)/2$. This bounds the variables to the circle $x^2+y^2 = 1/4$ with $x \in [-1/2, 1/2]$. $|e^{x+iy}-1|^2 = e^{2x}+1-2e^x\cos y$. Because this expression is even with respect to $y$, we may assume $y \ge 0$ and substitute $y = \sqrt{1/4-x^2}$. It then suffices to minimize the function
\[
F(x) = e^{2x}+1-2e^x\cos\left(\sqrt{\frac{1}{4}-x^2}\right), \quad x \in \left[ \frac{-1}{2}, \frac 12\right] 
\]
Differentiating $F(x)$ for $x \in (-1/2, 1/2)$ yields
\[
F'(x) = 2e^x\left(e^x - \cos y - x\frac{\sin y}{y}\right),\qquad
y=\sqrt{\frac14-x^2}.
\]
For $0 \le x < 1/2$, we apply the standard inequalities $e^x \ge 1+x$, $\cos y \le 1$, and $(\sin y)/y \le 1$. Substituting these yields $e^x - \cos y - (x \sin y)/y \ge (1+x) - 1 - x = 0$. For $0\le x<1/2$, we have $0<y\le1/2$ and hence
$\cos y<1$. Therefore,
\[
e^x-\cos y-x\frac{\sin y}{y}
\ge
1-\cos y
>0.
\]
For the left half of the interval, $-1/2 < x < 0$, we set $a = -x$ so that $a \in (0, 1/2)$. The parenthetical expression becomes $e^{-a} - \cos y + a(\sin y)/y$. By employing the elementary Taylor estimates $e^{-a} \ge 1-a$, $\cos y \le 1 - y^2/2 + y^4/24$, and $(\sin y)/y \ge 1 - y^2/6$, we obtain the lower bound:
\[
e^{-a} - \cos y + a\frac{\sin y}{y} \ge (1-a) - \left(1 - \frac{y^2}{2} + \frac{y^4}{24}\right) + a\left(1 - \frac{y^2}{6}\right) = y^2\left(\frac{1}{2} - \frac{a}{6} - \frac{y^2}{24}\right).
\]
Given the domain restrictions $a < 1/2$ and $y^2 < 1/4$, this lower bound is strictly greater than $y^2(13/32) > 0$.

Since $F'(x) > 0$ across the open interval $(-1/2, 1/2)$, the function $F$ is strictly monotonically increasing on $[-1/2, 1/2]$. Therefore, its absolute minimum occurs at the left endpoint $x = -1/2$, which corresponds to $\cos \theta = -1$, or $\theta = \pm \pi$. Evaluating the original modulus at $z = -1$ yields the minimum value:
\[
\mathcal{R}(\varphi_{\mathrm{Lim}},\psi_e) = \sqrt{2}\left| \frac{1}{\sqrt{e}} - 1 \right| = \sqrt{2}\left(1 - \frac{1}{\sqrt{e}}\right).
\]
Since the minimum is attained at $z_0=-1$, we have
$\psi_e(z_0)=e^{-1}$.
Let $r_0=\sqrt2\left(1-1/\sqrt e\right).$
Then
$
\varphi_{\mathrm{Lim}}(-r_0)
=\left(1-r_0/\sqrt2\right)^2
= e^{-1}
=
\psi_e(z_0).
$
Hence the radius is sharp.
\end{proof}

We next consider the three-leaf function
$\psi_{3L}(z)=1+4z/5+z^4/5$.
The boundary minimization is reduced to a polynomial positivity
problem on $[-1,1]$, which yields the sharp
$\mathcal P_{\psi_{3L}}$-radius of
$\mathcal P_{\varphi_{\mathrm{Lim}}}$.
\begin{theorem}
The $\mathcal P_{\psi_{3L}}$-radius of the class
$\mathcal P_{\varphi_{\mathrm{Lim}}}$ is
\[
\mathcal R(\varphi_{\mathrm{Lim}},\psi_{3L})
=
\sqrt{\frac{24-6\sqrt{15}}{5}}
\approx 0.390410.
\]
\end{theorem}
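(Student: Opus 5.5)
The plan is to apply \eqref{R3-formula}, so that
\[
\mathcal R(\varphi_{\mathrm{Lim}},\psi_{3L})=\sqrt2\min_{|z|=1}\bigl|\sqrt{\psi_{3L}(z)}-1\bigr|,
\]
with the principal branch. This branch is legitimate: $\operatorname{Re}\psi_{3L}(e^{i\theta})=1+\tfrac45\cos\theta+\tfrac15\cos4\theta\ge 2/5>0$ on $\partial\mathbb D$. The claimed value amounts to
\[
\min_{|z|=1}\bigl|\sqrt{\psi_{3L}(z)}-1\bigr|^2=m:=\frac{12-3\sqrt{15}}{5}\approx0.0762.
\]
At $z=-1$ the value is $(1-\sqrt{2/5})^2\approx0.135$, so the minimum is at nonreal points. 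The cheap estimates \eqref{eq:lim-rationalization-bound}--\eqref{eq:lim-A-bound} will therefore not be sharp, and I would work with the exact modulus.

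For $w=\sqrt\psi$ we have $|w|^2=|\psi|$ and $\operatorname{Re}w=\sqrt{(|\psi|+\operatorname{Re}\psi)/2}$. Hence
\[
G:=\bigl|\sqrt{\psi}-1\bigr|^2=|\psi|+1-\sqrt{2(|\psi|+\operatorname{Re}\psi)}.
\]
With $z=e^{i\theta}$ and $x=\cos\theta$, the Chebyshev identities give polynomial expressions in $x$:
\[
\operatorname{Re}\psi_{3L}=1+\tfrac45x+\tfrac15(8x^4-8x^2+1),\qquad
s^2:=|\psi_{3L}|^2=\tfrac4{25}(20x^4+8x^3-20x^2+4x+13).
\]
The latter is already computed in the lune section. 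The inequality $G\ge m$ is equivalent to $s+1-m\ge\sqrt{2(s+\operatorname{Re}\psi)}$. The left side is positive, so I would square it to get
\[
(s+1-m)^2-2s-2\operatorname{Re}\psi\ge0 .
\]
This is a quadratic in $s$ of the form $s^2+\beta s+\gamma(x)$. Using $s^2=P(x)$ (the polynomial above), it becomes linear in $s$: $\beta s\ge -P(x)-\gamma(x)$. I would then isolate $s$ and square once more, keeping track of the signs of both sides on $[-1,1]$. The result is a polynomial inequality $Q(x)\ge0$ in $x$ alone, with coefficients in $\mathbb Q(\sqrt{15})$.

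Next I would locate the minimizer. Equality must occur at a tangency point, so $Q$ should have a double root $x_0\in(-1,1)$, presumably with $x_0\in\mathbb Q(\sqrt{15})$. I would find it by solving $Q=Q'=0$, or more directly by the Lagrange condition that the curve $\psi_{3L}(\partial\mathbb D)$ touches the limaçon $\varphi_{\mathrm{Lim}}(r\partial\mathbb D)$. I would then factor $Q(x)=(x-x_0)^2R(x)$ and prove $R>0$ on $[-1,1]$. For that I would use sum-of-squares decompositions plus a negative-discriminant quadratic, as in the lune/three-leaf theorem. If that fails, I would use a Sturm sequence as in the lune/nephroid theorem.

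Sharpness then follows as in the earlier proofs. Take $\theta_0=\arccos x_0$, $z_0=e^{i\theta_0}$ and $w_0=\sqrt2\bigl(\sqrt{\psi_{3L}(z_0)}-1\bigr)$. Then $|w_0|=r_0$ and $\varphi_{\mathrm{Lim}}(w_0)=\psi_{3L}(z_0)$. Since the minimum is below $1$, Corollary~\ref{cor:three-radius-formulas} applies, because $\varphi_{\mathrm{Lim}}\not\prec\psi_{3L}$.

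The main obstacle is the double squaring. I must check that the side conditions, namely the signs of both sides before each squaring, hold on all of $[-1,1]$, so that no spurious region is introduced. I also need to obtain an exact factorization of the resulting degree-$8$ or so polynomial with irrational coefficients. If the elimination becomes unwieldy, I would fall back on the parametrization $\psi=(1+t)^2$. Proving $|t|^2\ge m$ is the same as showing that $\psi_{3L}(e^{i\theta})$ never enters the limaçon region $\{(1+t)^2:|t|<\sqrt m\}$. I would test that with the implicit equation $(|\psi|+1-m)^2=2(|\psi|+\operatorname{Re}\psi)$ of its boundary.
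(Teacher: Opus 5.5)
Your proposal is correct and is essentially the paper's proof. The paper also uses $|\sqrt w-1|^2=|w|+1-\sqrt{2(|w|+\operatorname{Re}w)}$ and squares once to reach $L(x)\ge 2d|w|$ with $L=|w-1|^2+d^2-2d$ (your $\beta=-2m$ step). It checks $L>0$ via $16x^3-12x\ge-4$, squares again, and factors $L(x)^2-4d^2|w|^2=\tfrac{4}{625}(2x-1)^2P(x)$ with $P>0$ on $[-1,1]$, so the double root is $x_0=1/2$ (i.e.\ $z_0=e^{\pm i\pi/3}$) and the eliminated polynomial has degree $6$. One small slip: $\operatorname{Re}\psi_{3L}(e^{i\theta})\ge 2/5$ is false, since the minimum is $(3-\sqrt5)/4$ at $\theta=\pm4\pi/5$, but the positivity you need for the principal branch still holds.
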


\begin{proof}

For $\psi_{3L}(z)=1+4z/5+z^4/5$, it follows from
\eqref{R3-formula} that
\[
\mathcal R(\varphi_{\mathrm{Lim}},\psi_{3L})
=
\sqrt2\min_{|z|=1}
\left|\sqrt{\psi_{3L}(z)}-1\right|.
\]
Set $z=e^{it}$, $-\pi\le t\le\pi$, and write
$w=\psi_{3L}(e^{it})=u+iv$. Let
$
D(t)=\left|\sqrt{w}-1\right|^2.
$
Since
$\operatorname{Re}\sqrt{w}=\sqrt{(|w|+u)/2}$, we have
\begin{equation}\label{eq:3L-D}
D(t)
=
|w|+1-\sqrt{2(|w|+u)}.
\end{equation}
Set $x=\cos t\in[-1,1]$. A direct calculation gives
\[
u(x)=\frac{8x^4-8x^2+4x+6}{5},
\qquad
V(x):=|w|^2
=
\frac4{25}
\left(20x^4+8x^3-20x^2+4x+13\right),
\]
and
$
|w-1|^2
=(1/25)(32x^3-24x+17).
$
Put
$
d=(3/5)(4-\sqrt{15}).
$
We show that $D(t)\ge d$ for all $t$. Write
$r=|w|=\sqrt{V(x)}$. From \eqref{eq:3L-D}, the inequality
$D(t)\ge d$ is equivalent to
$
2(r+u)\le (r+1-d)^2,
$
since $r+1-d>0$. Using
$r^2-2u+1=|w-1|^2$, this reduces to
$L(x)\ge2dr$, where
\[
L(x)
=
|w-1|^2+d^2-2d
=
\frac{2}{25}
\left(16x^3-12x+88-21\sqrt{15}\right).
\]
Since $16x^3-12x\ge-4$ on $[-1,1]$, we obtain
\[
L(x)
\ge
\frac{42}{25}(4-\sqrt{15})>0.
\]
Thus, it is sufficient to square the inequality $L(x)\ge2dr$.
A direct simplification gives
\begin{equation}\label{eq:3L-factor}
L(x)^2-4d^2V(x)
=
\frac4{625}(2x-1)^2P(x),
\end{equation}
where
$
P(x)=64x^4+64x^3
+(1440\sqrt{15}-5628)x^2
+(1848\sqrt{15}-7172)x
+48\sqrt{15}-149.
$
To prove that $P(x)>0$ on $[-1,1]$, write
$
P(x)=45(31-8\sqrt{15})+64(x+1)C(x),
$
where $C(x)=x^3-ax+b$ with
\[
a=\frac{1407-360\sqrt{15}}{16},
\qquad
b=\frac{51\sqrt{15}-193}{8}.
\]
Here $0<a<1$, $b>1/2$, and $b+a-1>0$.
For $-1\le x\le0$, writing $x=-y$ with $0\le y\le1$, we have
$C(x)=b+ay-y^3\ge b+(a-1)y \ge b+a-1 >0.$
For $0\le x\le1$, the minimum of $C(x)$ occurs at
$x=\sqrt{a/3}$, and hence
\[
C(x)
\ge
b-\frac{2a^{3/2}}{3\sqrt3}
>
\frac12-\frac{2}{3\sqrt3}
>0.
\]
Thus $P(x)>0$ throughout $[-1,1]$.
It follows from \eqref{eq:3L-factor} that
$L(x)^2\ge4d^2V(x)$, and hence $D(t)\ge d$.
Equality occurs when $2x-1=0$, that is, when
$x=1/2$, corresponding to $t=\pm\pi/3$.
At $x=1/2$, we have
$u=13/10$, $V=49/25$, and $|w|=7/5$. Therefore,
\[
D\left(\pm\frac{\pi}{3}\right)
=
\frac75+1
-
\sqrt{
2\left(\frac75+\frac{13}{10}\right)
}
=
\frac35(4-\sqrt{15}).
\]
Consequently,
\[
\mathcal R(\varphi_{\mathrm{Lim}},\psi_{3L})^2
=
2D\left(\pm\frac{\pi}{3}\right)
=
\frac65(4-\sqrt{15}),
\]
and therefore
\[
\mathcal R(\varphi_{\mathrm{Lim}},\psi_{3L})
=
\sqrt{\frac{24-6\sqrt{15}}{5}}.
\]
The minimum is attained at
$z_0=e^{i\pi/3}$ and its conjugate. Let
$
w_0
=
\sqrt2\left(
\sqrt{\psi_{3L}(z_0)}-1
\right).
$
Then
$|w_0|=\mathcal R(\varphi_{\mathrm{Lim}},\psi_{3L})$ and
$
\varphi_{\mathrm{Lim}}(w_0)
=\psi_{3L}(z_0)
\in
\partial\psi_{3L}(\mathbb D).
$
Hence the radius is sharp.
\end{proof}

Finally, we consider the modified sigmoid function
\[
\psi_{SG}(z)=\frac{2}{1+e^{-z}},
\]
whose associated Ma--Minda class was introduced and studied by
Goel and Kumar~\cite{MR4044913}. We use the relevant disk inclusion,
together with a direct boundary calculation, to determine the sharp
radius for the lima\c{c}on-associated class.

\begin{theorem}
The $\mathcal P_{\psi_{SG}}$-radius of the class
$\mathcal P_{\varphi_{\mathrm{Lim}}}$ is
\[
\mathcal R(\varphi_{\mathrm{Lim}},\psi_{SG})
=
\sqrt2\left(\sqrt{\frac{2e}{e+1}}-1\right)
\approx0.295826.
\]
\end{theorem}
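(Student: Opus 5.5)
The plan is to apply \eqref{R3-formula} together with the lower bound \eqref{eq:lim-A-bound}, and to show that this bound is attained at $z_0=1$. The hypotheses of Corollary~\ref{cor:three-radius-formulas}(iii) hold. Indeed, $\psi_{SG}$ is analytic on a neighbourhood of $\overline{\mathbb D}$, since its poles are at $z=\pm i\pi,\pm 3i\pi,\dots$, and it is a Ma--Minda function. Non-subordination will follow because the boundary minimum we find is less than $1$. So
\[
\mathcal R(\varphi_{\mathrm{Lim}},\psi_{SG})=\sqrt2\min_{|z|=1}\left|\sqrt{\psi_{SG}(z)}-1\right|.
\]

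The key simplification is $\psi_{SG}(z)-1=(1-e^{-z})/(1+e^{-z})=\tanh(z/2)$. With $A=|\tanh(z/2)|$, inequality \eqref{eq:lim-A-bound} gives $|\sqrt{\psi_{SG}(z)}-1|\ge\sqrt{1+A}-1$. Since $\sqrt{1+A}-1$ is increasing in $A$, it suffices to show
\[
\min_{|z|=1}|\tanh(z/2)|=\tanh\tfrac12=\frac{e-1}{e+1}.
\]
Then $\sqrt{1+\tanh(1/2)}-1=\sqrt{2e/(e+1)}-1$. Equality holds at $z=1$, because there $\psi_{SG}(1)=2e/(e+1)$ is real and exceeds $1$, so all the inequalities leading to \eqref{eq:lim-A-bound} become equalities. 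Sharpness then follows exactly as in the earlier theorems: $\varphi_{\mathrm{Lim}}(r_0)=(1+r_0/\sqrt2)^2=2e/(e+1)=\psi_{SG}(1)$.

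The main step is the minimization of $|\tanh(w)|$ on the circle $|w|=1/2$. Write $w=a+ib$ with $a^2+b^2=1/4$. The standard identity is
\[
|\tanh w|^2=\frac{\sinh^2a+\sin^2b}{\sinh^2a+\cos^2b}.
\]
Put $S=\sinh^2(1/2)$ and clear denominators. The inequality $|\tanh w|^2\ge S/(1+S)$ becomes
\[
\sinh^2a+(1+2S)\sin^2b\ge S,\qquad\text{that is,}\qquad \sinh^2a+\cosh(1)\sin^2b\ge\sinh^2\tfrac12 .
\]

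To prove this, set $u=a^2\in[0,1/4]$. The function $u\mapsto\sinh^2\sqrt u=(\cosh 2\sqrt u-1)/2$ is a power series in $u$ with positive coefficients, hence convex. It therefore lies above its tangent at $u=1/4$, whose slope is $\sinh(2\sqrt u)/(2\sqrt u)\big|_{u=1/4}=\sinh 1$. This gives
\[
\sinh^2a\ge S-\sinh(1)\,b^2 .
\]
It then remains to check $\cosh(1)\sin^2b\ge\sinh(1)\,b^2$, i.e.\ $(\sin b/b)^2\ge\tanh1\approx0.7616$ for $|b|\le1/2$. This holds because $\sin b/b\ge1-b^2/6\ge23/24$.

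Equality forces $b=0$, i.e.\ $z=\pm1$. At $z=-1$, $\psi_{SG}(-1)=2/(1+e)<1$, so equality in \eqref{eq:lim-A-bound} fails there and the value is larger. The minimum is therefore attained only at $z=1$, which gives the stated radius.
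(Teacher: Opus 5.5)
Your proof is correct, and its overall structure is the paper's. You use the same ingredients: the identity $\psi_{SG}(z)-1=\tanh(z/2)$, the bound \eqref{eq:lim-A-bound} together with monotonicity of $s\mapsto\sqrt{1+s}-1$, equality at $z=1$, and the same sharpness check $\varphi_{\mathrm{Lim}}(r_0)=\psi_{SG}(1)$.

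The only genuine difference is how you prove $\min_{|z|=1}|\tanh(z/2)|=\tanh\tfrac12$.

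\textbf{The paper's argument.} It writes $z=x+iy$ and expresses $|\tanh(z/2)|^2=(Q-1)/(Q+1)$ with $Q(x)=\cosh x/\cos\sqrt{1-x^2}$. It then shows $Q$ is decreasing on $[0,1]$ from $Q'/Q=\tanh x-(x/y)\tan y<0$, using only $\tanh x<x$ and $\tan y>y$. This gives monotonicity of $|\tanh(z/2)|$ along the whole quarter circle from $i$ to $1$, with no numerical constants.

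\textbf{Your argument.} You clear denominators to obtain the single inequality $\sinh^2a+\cosh(1)\sin^2b\ge\sinh^2\tfrac12$ on $a^2+b^2=\tfrac14$. You prove it with the tangent line at $u=\tfrac14$ of the convex function $u\mapsto\sinh^2\sqrt u$, together with the estimate $(\sin b/b)^2\ge(23/24)^2>\tanh 1$.

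\textbf{Comparison.} Your route gives only the global lower bound, not monotonicity. It also rests on a numerical comparison, which you should state rigorously rather than via the decimal $0.7616$; for example, $e^2<8$ gives $\tanh1=(e^2-1)/(e^2+1)<7/9<(23/24)^2$. In exchange, strict convexity and the strict bound on $\sin b/b$ pin down the equality case $b=0$ at once. Your explicit treatment of $z=-1$ also makes uniqueness of the extremal point transparent: there $|\tanh(z/2)|$ is equally minimal, but \eqref{eq:lim-A-bound} is strict because $\psi_{SG}(-1)=2/(1+e)<1$. The paper leaves this point implicit, though it is not needed for the radius itself.
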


\begin{proof}
For
$
\psi_{SG}(z)=2/(1+e^{-z}),
$
it follows from \eqref{R3-formula} that
\[
\mathcal R(\varphi_{\mathrm{Lim}},\psi_{SG})
=
\sqrt2\min_{|z|=1}
\left|\sqrt{\psi_{SG}(z)}-1\right|.
\]
Since
$\psi_{SG}(z)-1=\tanh(z/2)$,
we first determine the minimum of
$|\psi_{SG}(z)-1|$ on $|z|=1$.
Let $z=x+iy$, where $x^2+y^2=1$. Then
\[
\left|\tanh\frac z2\right|^2
=
\frac{\cosh x-\cos y}{\cosh x+\cos y}.
\]
By symmetry, we may assume $0\le x\le1$ and
$y=\sqrt{1-x^2}$. Define
\[
Q(x)
=
\frac{\cosh x}{\cos\sqrt{1-x^2}}.
\]
For $0<x<1$,
\[
\frac{Q'(x)}{Q(x)}
=
\tanh x-\frac{x}{y}\tan y<0,
\]
since $\tanh x<x$ and $\tan y>y$. Hence $Q$ is decreasing on
$[0,1]$, so that
$
Q(x)\ge Q(1)=\cosh1.
$
Consequently,
\[
\left|\tanh\frac z2\right|^2
=
\frac{Q(x)-1}{Q(x)+1}
\ge
\frac{\cosh1-1}{\cosh1+1}
=
\left(\frac{e-1}{e+1}\right)^2.
\]
Thus, with
\[
a=\frac{e-1}{e+1},
\]
we have
$
|\psi_{SG}(z)-1|\ge a,
$
with equality at $z=\pm1$.
Let
$
A=|\psi_{SG}(z)-1|.
$
By \eqref{eq:lim-A-bound},
\[
\left|\sqrt{\psi_{SG}(z)}-1\right|
\ge
\sqrt{1+A}-1.
\]
Since $A\ge a$ and $s\mapsto\sqrt{1+s}-1$ is increasing for
$s>0$, it follows that
\[
\left|\sqrt{\psi_{SG}(z)}-1\right|
\ge
\sqrt{1+a}-1
=
\sqrt{\frac{2e}{e+1}}-1.
\]
At $z=1$,
\[
\psi_{SG}(1)
=
\frac{2e}{e+1}
=
1+a>0,
\]
and equality holds throughout. Therefore,
\[
\min_{|z|=1}
\left|\sqrt{\psi_{SG}(z)}-1\right|
=
\sqrt{\frac{2e}{e+1}}-1,
\]
and hence
\[
\mathcal R(\varphi_{\mathrm{Lim}},\psi_{SG})
=
\sqrt2\left(
\sqrt{\frac{2e}{e+1}}-1
\right).
\]
Let
\[
r_0
=
\sqrt2\left(
\sqrt{\frac{2e}{e+1}}-1
\right).
\]
Since
$
\varphi_{\mathrm{Lim}}(r_0)
=2e/(e+1)
=\psi_{SG}(1)
\in\partial\psi_{SG}(\mathbb D).
$
Hence the radius is sharp.
\end{proof}

 We next consider the function
 $\psi_{\mathrm{car}}(z)=1+z+z^2/2$.
 A similar rationalization reduces the required estimate
 to the monotonicity of a real-valued function.
 The following theorem determines the sharp
 $\mathcal P_{\psi_{\mathrm{car}}}$-radius of
 $\mathcal P_{\varphi_{\mathrm{Lim}}}$.
 
 The following radius was obtained in
 \cite[Theorem~4.3(f)]{MR4303808}.
 We provide a proof through the general subordination radius formula.
 \begin{theorem}
 The $\mathcal{P}_{\psi_{\mathrm{car}}}$-radius of the class
 $\mathcal{P}_{\varphi_{\mathrm{Lim}}}$ is
 \[
 \mathcal{R}(\varphi_{\mathrm{Lim}},\psi_{\mathrm{car}})
 =
 \sqrt2-1\approx0.414214.
 \]
 \end{theorem}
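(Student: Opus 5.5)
The plan is to mirror the argument used for $\psi_C$ in Theorem on $\mathcal R(\varphi_{\mathrm{Lim}},\psi_C)$. By \eqref{R3-formula} (the hypotheses hold by Lemma~\ref{lem:inverse-hypotheses}, since $\psi_{\mathrm{car}}$ is a polynomial and hence continuous on $\overline{\mathbb D}$), it suffices to show
\[
\min_{|z|=1}\left|\sqrt{\psi_{\mathrm{car}}(z)}-1\right| = 1-\frac{1}{\sqrt2}.
\]
Multiplying by $\sqrt2$ then gives $\sqrt2-1$. Non-subordination follows because this value is less than $1$.

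\textbf{Step 1: the two moduli.} Write $z=e^{i\theta}$ and $x=\cos\theta$.
\begin{itemize}
\item Since $\psi_{\mathrm{car}}(z)-1=z(z+2)/2$, we get $|\psi_{\mathrm{car}}(z)-1|=\tfrac12\sqrt{5+4x}$.
\item Since $\psi_{\mathrm{car}}(z)=\tfrac12\bigl(1+(1+z)^2\bigr)$, the triangle inequality gives $|\psi_{\mathrm{car}}(z)|\le \tfrac12\bigl(1+|1+z|^2\bigr)=\tfrac12(3+2x)$.
\end{itemize}
Inserting both into \eqref{eq:lim-rationalization-bound} yields
\[
\left|\sqrt{\psi_{\mathrm{car}}(z)}-1\right|\ge \frac{\tfrac12\sqrt{5+4x}}{1+\sqrt{(3+2x)/2}}.
\]

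\textbf{Step 2: reduce to one variable.} Substitute $s=5+4x\in[1,9]$. Then $(3+2x)/2=(s+1)/4$, so the lower bound becomes
\[
g(s)=\frac{\sqrt s}{2+\sqrt{s+1}}.
\]

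\textbf{Step 3: monotonicity of $g$.} This is the main step, though I expect it to be short. The logarithmic derivative is
\[
\frac{g'(s)}{g(s)}=\frac1{2s}-\frac{1}{2\sqrt{s+1}\,\bigl(2+\sqrt{s+1}\bigr)}.
\]
It is positive because $\sqrt{s+1}\bigl(2+\sqrt{s+1}\bigr)>s+1>s$. Hence $g$ is strictly increasing on $[1,9]$, and its minimum is
\[
g(1)=\frac{1}{2+\sqrt2}=1-\frac1{\sqrt2},
\]
attained only at $x=-1$.

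\textbf{Step 4: equality and sharpness.} At $z=-1$ we have $\psi_{\mathrm{car}}(-1)=1/2>0$. So equality holds in both the triangle inequality and \eqref{eq:lim-rationalization-bound}, and $|\sqrt{1/2}-1|=1-1/\sqrt2$. The lower bound is therefore attained, and the minimum equals $1-1/\sqrt2$.

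For sharpness, take $r_0=\sqrt2-1$. Then
\[
\varphi_{\mathrm{Lim}}(-r_0)=\bigl(1-r_0/\sqrt2\bigr)^2=\bigl(1/\sqrt2\bigr)^2=\tfrac12=\psi_{\mathrm{car}}(-1)\in\partial\psi_{\mathrm{car}}(\mathbb D),
\]
exactly as in the earlier lima\c{c}on proofs.
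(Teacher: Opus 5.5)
Your proposal is correct and follows essentially the same route as the paper: the rationalization bound \eqref{eq:lim-rationalization-bound}, the estimates $|\psi_{\mathrm{car}}(z)-1|=\tfrac12\sqrt{5+4x}$ and $|\psi_{\mathrm{car}}(z)|\le(3+2x)/2$, monotonicity of the resulting one-variable lower bound, and equality at $z=-1$. The only difference is cosmetic: you parametrize by $s=5+4x$ and use a logarithmic derivative, whereas the paper uses $u=\sqrt{(3+2x)/2}$ and differentiates $H(u)=\sqrt{4u^2-1}/(2(u+1))$ directly.
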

 
 \begin{proof}
 Let $\psi_{\mathrm{car}}(z)=1+z+z^2/2$. By \eqref{R3-formula},
 \[
 \mathcal{R}(\varphi_{\mathrm{Lim}},\psi_{\mathrm{car}})
 =
 \sqrt2\min_{|z|=1}
 \left|\sqrt{\psi_{\mathrm{car}}(z)}-1\right|.
 \]
 Rationalizing the expression gives
 \begin{equation}\label{lim-car-eq1}
 \left|\sqrt{\psi_{\mathrm{car}}(z)}-1\right|
 =
 \frac{|\psi_{\mathrm{car}}(z)-1|}
 {\left|\sqrt{\psi_{\mathrm{car}}(z)}+1\right|}.
 \end{equation}
 For $z=e^{i\theta}$, set $x=\cos\theta\in[-1,1]$. We first estimate the numerator in terms of $x$.
 \[
 |\psi_{\mathrm{car}}(z)-1|
 = \frac12|z|\left| z+2 \right|
 =\frac12\sqrt{5+4x}.
 \]
 To estimate the denominator, Rewriting the function as
 $\psi_{\mathrm{car}}(z)=(1/2)\left(1+(z+1)^2\right)$, the triangle inequality gives
 \[
 |\psi_{\mathrm{car}}(z)|
 \le
 \frac{1+|z+1|^2}{2}
 =
 \frac{3+2x}{2}.
 \]
 Therefore, the triangle inequality gives
 \[
 \left|\sqrt{\psi_{\mathrm{car}}(z)}+1\right|
 \le
 \sqrt{|\psi_{\mathrm{car}}(z)|}+1
 \le
 1+\sqrt{\frac{3+2x}{2}}.
 \]
 Substituting these estimates into \eqref{lim-car-eq1},
 we obtain
 \[
 \left|\sqrt{\psi_{\mathrm{car}}(z)}-1\right|
 \ge
 \frac{\sqrt{5+4x}}
 {2\left(1+\sqrt{(3+2x)/2}\right)}.
 \]
 Introducing $u=\sqrt{(3+2x)/2}$, where
 $u\in[1/\sqrt2,\sqrt{5/2}]$, and using $5+4x=4u^2-1$, the lower bound reduces to
 \[
 H(u)=\frac{\sqrt{4u^2-1}}{2(u+1)}.
 \]
 Differentiating,
 \[
 H'(u)
 =
 \frac{4u+1}
 {2(u+1)^2\sqrt{4u^2-1}}
 >0.
 \]
 Thus $H$ is strictly increasing, and its minimum occurs at
 $u=1/\sqrt2$, corresponding to $x=-1$. Hence
 \[
 H\left(\frac{1}{\sqrt2}\right)
 =
 \frac{1}{2+\sqrt2}
 =
 1-\frac{1}{\sqrt2}.
 \]
 At $z=-1$, we have $\psi_{\mathrm{car}}(-1)=1/2$, and hence
 $\left|\sqrt{\psi_{\mathrm{car}}(-1)}-1\right|=1-1/\sqrt2$.
 Thus the lower bound is attained at $z=-1$, and therefore
 \[
 \mathcal{R}(\varphi_{\mathrm{Lim}},\psi_{\mathrm{car}})
 =
 \sqrt2\left(1-\frac{1}{\sqrt2}\right)
 =
 \sqrt2-1.
 \]
 Since the minimum is attained at $z_0=-1$, we have
 $\psi_{\mathrm{car}}(z_0)=1/2$.
 Let $r_0=\sqrt2-1$. Then
 $\varphi_{\mathrm{Lim}}(-r_0)
 =(1-r_0/\sqrt2)^2
 =1/2
 =\psi_{\mathrm{car}}(z_0)$.
 Hence the radius is sharp.
 \end{proof}

\begin{figure}[H]
\centering
\begin{subfigure}{0.48\textwidth}
\centering
\begin{tikzpicture}
    \begin{axis}[
        axis equal,
        axis lines=center,
        xmin=-0.2, xmax=3.2,
        ymin=-2, ymax=2,
        xtick={0, 1.0, 2.0, 3.0},
        ytick={-1.0, -0.5, 0.5, 1.0},
        thick,
        scale=0.75
    ]
    \addplot[
        domain=-180:180,
        samples=400,
        smooth,
        color=blue,
        line width=1pt,
    ]
    ({1 + (4/3)*cos(x) + (2/3)*cos(2*x)},
     {(4/3)*sin(x) + (2/3)*sin(2*x)});    
     
    \def\R{sqrt(2)*(1 - 1/sqrt(3))}
    \addplot[
        red,
        domain=-180:180,
        samples=400,
        smooth,
        line width=1pt
    ]
    ({quadX(\R,x)}, {quadY(\R,x)});
    \end{axis}
\end{tikzpicture}
\caption{$\mathcal{R}(\varphi_{\mathrm{Lim}},\psi_C)=\sqrt2\left(1- \frac{1}{\sqrt 3}\right)$}
\end{subfigure}
\begin{subfigure}{0.48\textwidth}
\centering
\begin{tikzpicture}
    \begin{axis}[
        axis equal,
        axis lines=center,
        xmin=-0.2, xmax=2.8,
        ymin=-1.5, ymax=1.5,
        xtick={0, 1.0, 2.0},
        ytick={-1.0, -0.5, 0.5, 1.0},
        thick,
        scale=0.75
    ]
    \addplot[
        domain=-180:180,
        samples=400,
        smooth,
        color=blue,
        line width=1pt,
    ]
    ({1 + cos(x) - (1/3)*cos(3*x)},
     {sin(x) - (1/3)*sin(3*x)});    
     
    \def\R{sqrt(2)*(sqrt(5/3) - 1)}
    \addplot[
        red,
        domain=-180:180,
        samples=400,
        smooth,
        line width=1pt
    ]
    ({quadX(\R,x)}, {quadY(\R,x)});
    \end{axis}
\end{tikzpicture}
\caption{$\mathcal{R}(\varphi_{\mathrm{Lim}},\psi_{\mathrm{Ne}})=\sqrt2\left(\sqrt{\frac{5}{3}}-1\right)$}
\end{subfigure}
\caption{Sharp radii associated with the lima\c{c}on function
$\varphi_{\mathrm{Lim}}$ for $\psi_{C}$ and $\psi_{Ne}$.}
\label{p2f3}
\end{figure}

\begin{figure}[H]
\centering
\begin{subfigure}{0.48\textwidth}
\centering
\begin{tikzpicture}
    \begin{axis}[
        axis equal,
        axis lines=center,
        xmin=-0.2, xmax=3.0,
        ymin=-1.5, ymax=1.5,
        xtick={0, 1.0, 2.0},
        ytick={-1.0, -0.5, 0.5, 1.0},
        thick,
        scale=0.75
    ]
    \addplot[
        domain=-180:180,
        samples=400,
        smooth,
        color=blue,
        line width=1pt,
    ]
    ({exp(cos(x)) * cos(deg(sin(x)))},
     {exp(cos(x)) * sin(deg(sin(x)))});    
     
    \def\R{sqrt(2)*(1 - exp(-0.5))}
    \addplot[
        red,
        domain=-180:180,
        samples=400,
        smooth,
        line width=1pt
    ]
    ({quadX(\R,x)}, {quadY(\R,x)});
    \end{axis}
\end{tikzpicture}
\caption{$\mathcal{R}(\varphi_{\mathrm{Lim}},\psi_e)=\sqrt2\left(1- \frac{1}{\sqrt e}\right)$}
\end{subfigure}
\begin{subfigure}{0.48\textwidth}
\centering
\begin{tikzpicture}
    \begin{axis}[
        axis equal,
        axis lines=center,
        xmin=-0.2, xmax=2.2,
        ymin=-1.2, ymax=1.2,
        xtick={0, 0.5, 1.0, 1.5, 2.0},
        ytick={-1.0, -0.5, 0.5, 1.0},
        thick,
        scale=0.75
    ]
    \addplot[
        domain=-180:180,
        samples=400,
        smooth,
        color=blue,
        line width=1pt,
    ]
    ({1 + (4/5)*cos(x) + (1/5)*cos(4*x)},
     {(4/5)*sin(x) + (1/5)*sin(4*x)});    
     
    \def\R{sqrt(1.2*(4 - sqrt(15)))}
    \addplot[
        red,
        domain=-180:180,
        samples=400,
        smooth,
        line width=1pt
    ]
    ({quadX(\R,x)}, {quadY(\R,x)});
    \end{axis}
\end{tikzpicture}
\caption{$\mathcal R(\varphi_{\mathrm{Lim}},\psi_{3L})=\sqrt{\frac{24-6\sqrt{15}}{5}}$}
\end{subfigure}
\caption{Sharp radii associated with the lima\c{c}on function
$\varphi_{\mathrm{Lim}}$ for $\psi_e$ and $\psi_{3L}$.}
\label{p2f4}
\end{figure}

\begin{figure}[H]
\centering
\begin{subfigure}{0.48\textwidth}
\centering
\begin{tikzpicture}
    \begin{axis}[
        axis equal,
        axis lines=center,
        xmin=-0.2, xmax=2.5,
        ymin=-1.2, ymax=1.2,
        xtick={0, 0.5, 1.0, 1.5, 2.0},
        ytick={-1.0, -0.5, 0.5, 1.0},
        thick,
        scale=0.75,
        declare function={
            a = sqrt(2) + 1;
            denom(\t) = a^2 * (a^2 - 2*a*cos(\t) + 1);
            qX(\t) = 1 + (a^2 * cos(2*\t) + (a^3 - a) * cos(\t) - a^2) / denom(\t);
            qY(\t) = (a^2 * sin(2*\t) + (a^3 - a) * sin(\t)) / denom(\t);
        }
    ]
    \addplot[
        domain=-180:180,
        samples=400,
        smooth,
        color=blue,
        line width=1pt,
        variable=\t
    ]
    ({qX(\t)}, {qY(\t)});    
     
    \def\R{sqrt(2)*(1 - sqrt(2*sqrt(2) - 2))}
    \addplot[
        red,
        domain=-180:180,
        samples=400,
        smooth,
        line width=1pt,
        variable=\t
    ]
    ({quadX(\R,\t)}, {quadY(\R,\t)});
    \end{axis}
\end{tikzpicture}
\caption{$\mathcal R(\varphi_{\mathrm{Lim}},\psi_R)=\sqrt2\left(1-\sqrt{2\sqrt2-2}\right)$}
\end{subfigure}
\begin{subfigure}{0.48\textwidth}
\centering
\begin{tikzpicture}
    \begin{axis}[
        axis equal,
        axis lines=center,
        xmin=-0.2, xmax=2.5,
        ymin=-1.0, ymax=1.0,
        xtick={0, 0.5, 1.0, 1.5, 2.0},
        ytick={-1.0, -0.5, 0.5, 1.0},
        thick,
        scale=0.75,
        declare function={
            ec(\t) = exp(-cos(\t));
            es(\t) = deg(sin(\t));
            denomQ(\t) = 1 + 2*ec(\t)*cos(es(\t)) + ec(\t)^2;
            qX(\t) = 2*(1 + ec(\t)*cos(es(\t))) / denomQ(\t);
            qY(\t) = 2*(ec(\t)*sin(es(\t))) / denomQ(\t);
        }
    ]
    \addplot[
        domain=-180:180,
        samples=400,
        smooth,
        color=blue,
        line width=1pt,
        variable=\t
    ]
    ({qX(\t)}, {qY(\t)});    
     
    \def\R{sqrt(2)*(sqrt(2*exp(1)/(exp(1)+1)) - 1)}
    \addplot[
        red,
        domain=-180:180,
        samples=400,
        smooth,
        line width=1pt,
        variable=\t
    ]
    ({quadX(\R,\t)}, {quadY(\R,\t)});
    \end{axis}
\end{tikzpicture}
\caption{$\mathcal R(\varphi_{\mathrm{Lim}},\psi_{SG})=\sqrt2\left(\sqrt{\frac{2e}{e+1}}-1\right).$}
\end{subfigure}
\caption{Sharp radii associated with the lima\c{c}on function
$\varphi_{\mathrm{Lim}}$ for $\psi_R$ and $\psi_{SG}$.}
\label{p2f5}
\end{figure}

\section{Concluding Remarks}

We established a general formula for the sharp
$\mathcal P_{\psi}$-radius of $\mathcal P_{\varphi}$ under suitable
conditions on the inverse of $\varphi$. For Ma--Minda functions,
this formula also yields the corresponding radii for the associated
starlike and convex classes.

We applied the general result to the lemniscate-, lune-, and
lima\c{c}on-associated classes, obtaining sharp radius results for
the functions considered in Sections~3--5. The sharpness of each
result is established by identifying the boundary point at which
the corresponding minimum is attained. In particular, for the
lune and nephroid classes, we obtained the sharp radius
\[
\mathcal R(\varphi_{\leftmoon},\psi_{\mathrm{Ne}})
=\frac{8}{15},
\]
improving the previously known lower bound.

The results illustrate how the general radius formula can be used
to obtain sharp radii for different classes of normalized univalent
functions, provided the required inverse conditions hold and the
corresponding boundary minima can be determined.

\bigskip

\noindent
\textbf{Acknowledgement}: The first author is supported by a HTRA fellowship from NIT Tiruchirappalli.

\end{document}